\newtheorem{theorem}{Theorem}[section]
\newtheorem{lemma}{Lemma}[section]
\newtheorem{proposition}{Proposition}[section]
\theoremstyle{definition}
\newtheorem{definition}[theorem]{Definition}
\theoremstyle{remark}
\newtheorem{remark}[theorem]{Remark}
\numberwithin{equation}{section}
\begin{document}
\renewcommand{\theequation}{\arabic{section}.\arabic{equation}}

\title[Existence and regularity of ultradifferentiable solutions]{Existence and regularity of ultradifferentiable periodic solutions to certain vector fields.}

\author{Rafael B. Gonzalez}
\address{\small Departamento de Matem\'atica, Universidade Estadual de Maring\'a, Maring\'a, PR, 87020-900, Brazil}
\email{rbgonzalez@uem.br}

\subjclass[2020]{Primary 35A01; Secondary 35B10, 35H10}

\keywords{Global solvability, global hypoellipticity, vector fields, periodic solutions, Fourier series, ultradifferentiable functions}

\begin{abstract}
We consider a class of first-order partial differential operators, acting on the space of ultradifferentiable periodic functions, and we describe their range by using the following conditions on the coefficients of the operators: the connectedness of certain sublevel sets, the dimension of the subspace generated by the imaginary part of the coefficients, and Diophantine conditions. In addition, we show that these properties are also linked to the regularity of the solutions. The results extend previous ones in Gevrey classes.
\end{abstract}
\maketitle

%\begin{keyword}
%Global solvability \sep global hypoellipticity \sep vector fields \sep periodic solutions

%\MSC Primary  \sep Secondary
%\end{keyword}
%\end{frontmatter}

%%
%% Start line numbering here if you want
%%
% \linenumbers

%% main text
\section{Introduction}

In \cite{BDG1}, it was studied the existence of solutions in $\mathcal{C}^{\infty}(\mathbb{T}^{N+1})$ to first-order partial differential equations given by $Lu=f,$ in which $f\in\mathcal{C}^{\infty}(\mathbb{T}^{N+1})$ and $L$ is a vector field on $\mathbb{T}^{N+1}$ of the type 

\begin{equation}\label{L_introduction}
L=\frac{\partial}{\partial t}+\sum_{j=1}^{N}(a_j+ib_j)(t)\frac{\partial}{\partial x_j},
\end{equation}
where the coefficients $a_j$ and $b_j$ are real-valued smooth functions defined on
$\mathbb{T}^1$, for $j=1,\ldots,N,$ and the coordinates in $\mathbb{T}^{1}\times\mathbb{T}^N$ are denoted by $(t,x)=(t,x_1,\ldots,x_N).$

To be more precise, \cite{BDG1} presents a characterization to the closedness of the range property for the operator $L:\mathcal{C}^{\infty}(\mathbb{T}^{N+1})\rightarrow\mathcal{C}^{\infty}(\mathbb{T}^{N+1}).$ The results in \cite{BDG1} extends to a torus of arbitrary dimension, previous ones in dimensions two and three, which appear in \cite{Ho} and \cite{BDGK}, respectively.

When $L\mathcal{C}^{\infty}(\mathbb{T}^{N+1})$ is closed, we say that $L$ is globally solvable. This means that $L\mathcal{C}^{\infty}(\mathbb{T}^{N+1})=(\ker{}^tL)^{\circ},$ in which $^tL:\mathcal{D}'(\mathbb{T}^{N+1})\rightarrow\mathcal{D}'(\mathbb{T}^{N+1})$ is the transpose operator of $L,$ and \[(\ker{}^tL)^\circ=\{f\in\mathcal{C}^{\infty}(\mathbb{T}^{N+1});\,\,\langle\mu,f\rangle=0, \,\forall \,\,\mu\in\ker{}^tL\}.\]

In addition, concerning the regularity of the solutions, in \cite{BDG1} it was characterized the global hypoellipticity of $L.$ We recall that the vector field $L$ is said to be globally hypoelliptic if the conditions $\mu\in\mathcal{D}'(\mathbb{T}^{N+1})$ and $L\mu\in \mathcal{C}^{\infty}(\mathbb{T}^{N+1})$ imply that $\mu\in \mathcal{C}^{\infty}(\mathbb{T}^{N+1}).$

A next step was given in \cite{BDG3}, which presents characterizations to the global solvability and to the global hypoellipticity of $L$ in the Gevrey classes. It was proved that we may solve in Gevrey classes without loosing regularity.

Many authors have given attention to the problems of existence and(or) regularity of periodic solutions to partial differential equations. We would like to mention the references \cite{A,AJ,AD,BFP,B1,B,BCP,BD,BDGK,CC,GW,Ho}. As we may see in these references, the most recent articles have been concerned with periodic ultradifferentiable functions, all of them studying global hypoellipticity.     

Inspired by \cite{BDG1,BDG3} and by \cite{A,AD,BFP}, we now study the problems of the existence and regularity of solutions for the vector field $L$ on the space of periodic ultradifferentiable functions of Roumieu type, $\mathcal{E}_{\{\omega\}}(\mathbb{T}^{N+1}),$ in which $\omega:[0,\infty)\rightarrow\mathbb{R}$ is a weight function (see Definition \ref{defr1}). In this study, we assume that the coefficients $a_j+ib_j$ belong to $\mathcal{E}_{\{\omega\}}(\mathbb{T}^{1}).$ 

We stress that the main focus of this article is the existence of solutions on $\mathcal{E}_{\{\omega\}}(\mathbb{T}^{N+1}).$ As in the Gevrey classes, we will show that we may solve without loos of regularity, that is, if $f\in\mathcal{E}_{\{\omega\}}(\mathbb{T}^{N+1})$ and supposing that $f$ satisfies the compatibility conditions, then we may find a solution to $Lu=f$ in the same class $\mathcal{E}_{\{\omega\}}(\mathbb{T}^{N+1}).$

If $\omega(t)=t^{1/s},$ $s>1$ and $t\geq0,$ then the space $\mathcal{E}_{\{t^{1/s}\}}(\mathbb{T}^{N+1})$ is the space of the periodic Gevrey functions of order $s.$ It follows that the results obtained in this article extend those in \cite{BDG3}. 

Motivated by \cite{BDG3}, we say that $L$ is globally $\{\omega\}-$solvable if the range of the operator $L:\mathcal{E}_{\{\omega\}}(\mathbb{T}^{N+1})\rightarrow \mathcal{E}_{\{\omega\}}(\mathbb{T}^{N+1})$ is equal to the subspace $(\ker{}^tL)^\circ,$ in which
\[(\ker{}^tL)^\circ=\left\{f\in\mathcal{E}_{\{\omega\}}(\mathbb{T}^{N+1});\,\langle \mu,f\rangle=0, \,\forall \,\,\mu\in\mathcal{E}_{\{\omega\}}'(\mathbb{T}^{N+1}) \,\,\mathrm{such\,\, that }{}^tL\mu=0\right\};\] here $\mathcal{E}'_{\{\omega\}}(\mathbb{T}^{N+1})$ denotes the strong dual of $\mathcal{E}_{\{\omega\}}(\mathbb{T}^{N+1}).$ The space $\mathcal{E}_{\{\omega\}}(\mathbb{T}^{N+1})$ is endowed  with an inductive limit topology given by a sequence of Banach spaces (see Section \ref{sec3}). It follows that $L$ is globally $\{\omega\}-$solvable if and only if $L\mathcal{E}_{\{\omega\}}(\mathbb{T}^{N+1})$ is a closed subspace of $\mathcal{E}_{\{\omega\}}(\mathbb{T}^{N+1})$ (see Lemma 2.2 in \cite{Ar}).

Following \cite{BCP}, $L$ is said to be strongly $\{\omega\}-$solvable if $L\mathcal{E}_{\{\omega\}}(\mathbb{T}^{N+1})$ is a closed subspace of $\mathcal{E}_{\{\omega\}}(\mathbb{T}^{N+1})$ and $\ker{}^tL$ is a finite dimensional subspace of $\mathcal{E}'_{\{\omega\}}(\mathbb{T}^{N+1}).$

The approach used in this article is mainly inspired in \cite{AD,B1,BDG1,BDG3}. One of the main tools is the use of partial Fourier series. Similar to which happens in $\mathcal{C}^\infty$ and in Gevrey spaces, the functions in $\mathcal{E}_{\{\omega\}}(\mathbb{T}^{N+1})$ may be characterized by partial Fourier series. This characterization is proved in Section 2 of \cite{BFP}.

As in \cite{BDG3}, the use of Fa\`{a} Di Bruno's formula for a derivative of the composed of two functions helps us to control the decaying of the partial Fourier coefficients. 

Another tool which appears in \cite{BDG1,BDG3} is the use of cutoff functions. This use is inspired in a technique which appears in \cite{Ho,T1}, where the existence of solutions is linked to the connectetdness of certain sublevel and superlevel sets. 

This strategy is used to prove Theorem \ref{mtr2}, which yields a characterization to the global $\{\omega\}-$solvability of $L:\mathcal{E}_{\{\omega\}}(\mathbb{T}^{N+1})\rightarrow \mathcal{E}_{\{\omega\}}(\mathbb{T}^{N+1})$ on the spaces of non-quasianalytic ultradifferentiable functions of Roumieu type. 

The second main result is Theorem \ref{mtr1}, which characterizes the strong $\{\omega\}-$ solvability and the global $\{\omega\}-$hypoellipticity of $L:\mathcal{E}_{\{\omega\}}(\mathbb{T}^{N+1})\rightarrow \mathcal{E}_{\{\omega\}}(\mathbb{T}^{N+1})$ on both the spaces of quasianalytic and non-quasianalytic ultradifferentiable functions of Roumieu type. As in \cite{Ar,BFP}, we say that $L$ is globally $\{\omega\}-$hypoelliptic if the conditions $\mu\in\mathcal{E}'_{\{\omega\}}(\mathbb{T}^{N+1})$ and $L\mu\in\mathcal{E}_{\{\omega\}}(\mathbb{T}^{N+1})$ imply that $\mu\in\mathcal{E}_{\{\omega\}}(\mathbb{T}^{N+1}).$ To treat the quasianalytic case, we will use that a quasianalytic function may be determined by its value and the values of all its successive derivatives at a singular point. In particular, the zeros of a non-identically zero quasianalytic function are isolated. Hence, we may apply certain techniques which appear in \cite{AD,B}. A crucial tool in this approach is the use of a method of stationary phase. The results in the quasianalytic context obtained in this article extends Theorem 2.3 in \cite{B}. We stress that the global analytic-solvability (in the sense of closed range) of vector fields of tube type (as the ones in \eqref{L_introduction}) is not completely understood. Therefore, here we do not touch this question in the quasianalytic setup.   

This article is organized as follows: In Section \ref{sec2} we state the main results and in Section \ref{sec3} we present the main properties about the class of ultradifferentiable functions which will be used here. The proof of Theorem \ref{mtr2} is done in Section \ref{sec4} and in Section \ref{sec5}. We dedicate Section \ref{sec6} to prove Theorem \ref{mtr1}.

\section{Statement of the main results}\label{sec2}

As in \cite{BDGK,BDG1,BDG3,GW}, both the problems of existence and regularity of periodic solutions are linked with a type of Diophantine (or exponential Diophantine) condition. For a pair of vectors $(\alpha,\beta)\in\mathbb{R}^N\times\mathbb{R}^N$ we define the following two exponential Diophantine conditions:\vspace{5pt}

\noindent$(EDC)_{1}^{\{\omega\}}$ for any $\epsilon>0$ there exists a constant $C_\epsilon>0$ such that
\[
|\tau+\langle\xi,\alpha+i\beta\rangle|\geq C_\epsilon\exp\{-\epsilon\omega(|(\tau,\xi)|)\},
\]
for all $(\tau,\xi)=(\tau,\xi_1,\ldots,\xi_N)\in\mathbb{Z}^{N+1}\setminus\{0\}.$
\vspace{5pt}

\noindent$(EDC)_2^{\{\omega\}}$ for any $\epsilon>0$ there exists a constant $C_\epsilon>0$ such that
\[
|\tau+\langle\xi,\alpha+i\beta\rangle|\geq C_\epsilon\exp\{-\epsilon\omega(|(\tau,\xi)|)\},
\]
for all $(\tau,\xi)=(\tau,\xi_1,\ldots,\xi_N)\in\mathbb{Z}^{N+1}$ such that $\tau+\langle\xi,\alpha+i\beta\rangle\neq0.$
\vspace{5pt}

We will use the following auxiliary notation: define \[a_{j0}=\frac{1}{2\pi}\int_{0}^{2\pi}a_j(t)dt, \quad b_{j0}=\frac{1}{2\pi}\int_{0}^{2\pi}b_j(t)dt, \quad j=1,\ldots,N, \]\[\alpha_{0}=(a_{10},\ldots,a_{N0}), \quad \beta_{0}=(b_{10},\ldots,b_{N0}),\]
\[\alpha(t)=(a_1(t),\ldots,a_{N}(t)) \quad \textrm{and} \quad \beta(t)=(b_1(t),\ldots,b_{N}(t)), \quad t\in\mathbb{T}^1.\]

We also define the sublevel sets
\begin{equation}\label{eqr7}
\Omega_r^{\xi}\doteq\left\{t\in\mathbb{T}^1;\int_{0}^{t}\langle\xi,\beta(\tau)\rangle d\tau<r\right\}, \,\,r\in\mathbb{R} \,\, \textrm{and} \,\,\xi\in\mathbb{Z}^N.
\end{equation}

\begin{theorem}\label{mtr2}
Suppose that $\omega$ is non-quasianalytic and subadditive, and let $L$ be given by {\em(\ref{L_introduction})}.
\begin{itemize}
\item[(I)] Assume that $b_{j0}=0,$ for every $j.$
\begin{itemize}
\item[(I.1)] If $b_j\equiv0$ for every $j,$ then $L$ is globally $\{\omega\}-$solvable if and only if the pair
$(\alpha_0,0)$ satisfies $(EDC)_2^{\{\omega\}}$.

\item[(I.2)] If $b_j\not\equiv0$ for at least one $j,$ then
$L$ is globally $\{\omega\}-$solvable if and only if $(a_{10},\ldots,a_{N0})\in\mathbb{Z}^N$ and all the sublevel sets $\Omega_r^{\xi}$ are connected.
\end{itemize}
\smallskip
\item[(II)] If $b_{j0}\neq0$ for some $j,$ then $L$ is globally $\{\omega\}-$solvable if and only if
the following properties are satisfied:
\begin{itemize}
\item[(II.1)] $\dim \emph{\textrm{span}}\{b_1,\ldots,b_N\}=1;$

\item[(II.2)] the functions $b_j$ do not change sign;

\item[(II.3)] the pair $(\alpha_{0},\beta_{0})$ satisfies $(EDC)_{2}^{\{\omega\}}$.
\end{itemize}
\end{itemize}
\end{theorem}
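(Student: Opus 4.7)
The plan is to pass to partial Fourier series in the $x$ variables, exploiting the characterization of $\mathcal{E}_{\{\omega\}}(\mathbb{T}^{N+1})$ by its partial coefficients from Section~2 of \cite{BFP}. Writing $u(t,x)=\sum_{\xi\in\mathbb{Z}^{N}}\hat{u}(t,\xi)e^{i\langle\xi,x\rangle}$ and similarly for $f$, the equation $Lu=f$ becomes the family of ODEs
\[
\frac{\partial\hat{u}}{\partial t}(t,\xi)+i\langle\xi,\alpha(t)+i\beta(t)\rangle\,\hat{u}(t,\xi)=\hat{f}(t,\xi),\qquad \xi\in\mathbb{Z}^{N}.
\]
The problem splits into (a) deciding for which $\xi$ this ODE has a $2\pi$-periodic solution under the compatibility condition $f\in(\ker\,{}^{t}L)^{\circ}$, and (b) bounding the family $\{\hat{u}(\cdot,\xi)\}$ in the Roumieu seminorms defining $\mathcal{E}_{\{\omega\}}(\mathbb{T}^{N+1})$.

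For the necessity half, I would construct explicit obstructions. In case~(I.1), take $f(t,x)=e^{i\langle\xi,x\rangle}$ for resonant modes; the unique candidate Fourier coefficient has size controlled by $|\tau+\langle\xi,\alpha_{0}\rangle|^{-1}$, and the failure of $(EDC)_{2}^{\{\omega\}}$ yields a sequence of $\xi$'s along which this blows up faster than any Roumieu seminorm can absorb, contradicting closedness of the range via the lemma from \cite{Ar}. In cases~(I.2) and~(II), I would reproduce the Baouendi--Treves cutoff construction of \cite{Ho,BDG1,BDG3}: from disconnected sublevel sets $\Omega_{r}^{\xi}$ (respectively, from linearly independent $b_j$'s or sign-changing $b_j$'s) one manufactures distributions $\mu\in\ker\,{}^{t}L$ and elements $f$ in the annihilator that cannot be reached by $L$ at the Roumieu scale. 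The crucial point is that non-quasianalyticity and subadditivity of $\omega$ allow one to pick the cutoff bump functions inside $\mathcal{E}_{\{\omega\}}(\mathbb{T}^{1})$, so the $C^{\infty}$ and Gevrey arguments transfer directly; similarly the condition $\alpha_{0}\in\mathbb{Z}^{N}$ in~(I.2) is forced by testing against the constant kernel elements.

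For the sufficiency half, I would solve the ODE explicitly with the integrating factor $\Phi(t,s,\xi)=i\int_{s}^{t}\langle\xi,\alpha(\sigma)+i\beta(\sigma)\rangle\,d\sigma$, obtaining
\[
\hat{u}(t,\xi)=\frac{1}{e^{2\pi i\langle\xi,\alpha_{0}+i\beta_{0}\rangle}-1}\int_{0}^{2\pi}e^{-\Phi(t,t-s,\xi)}\hat{f}(t-s,\xi)\,ds,
\]
choosing between the equivalent representations integrating forward or backward in $t$, or integrating from a distinguished base point inside a connected $\Omega_{r}^{\xi}$, according to the sign of $\operatorname{Im}\langle\xi,\beta_{0}\rangle$ (case~II) or the topology of $\Omega_{r}^{\xi}$ (case~I.2), exactly as in \cite{BDG1,BDG3}. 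The compatibility hypothesis cancels the small denominator on the finitely many resonant modes. One then estimates $\hat{u}(\cdot,\xi)$ in the Roumieu seminorms of $\mathcal{E}_{\{\omega\}}(\mathbb{T}^{1})$, using sign-definiteness of the $b_j$ in case~(II) to bound $e^{-\operatorname{Re}\Phi}$ uniformly, and absorbing the resulting polynomial-in-$|\xi|$ losses by the exponential decay of $\hat{f}(\cdot,\xi)$ together with~(II.3) or $(EDC)_{2}^{\{\omega\}}$.

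The main obstacle is the ultradifferentiable bookkeeping in step~(b): derivatives in $t$ of the integrating factor $e^{\pm\int\langle\xi,\beta\rangle}$ generate, via Fa\`a di Bruno's formula, sums whose combinatorial weights must be converted into the Roumieu scale governed by $\omega$ and its Young conjugate. Balancing the loss from division by the small denominator (controlled by $(EDC)_{2}^{\{\omega\}}$ or cancelled by connectedness as in~(I.2)) against the gain from subadditivity of $\omega$ and the exponential decay of $\hat{f}$ is the delicate core calculation; this is what must occupy Sections~\ref{sec4} and~\ref{sec5} of the paper, and it is the place where the Gevrey arguments of \cite{BDG3} do not literally carry over but require the weight-function machinery reviewed in Section~\ref{sec3}.
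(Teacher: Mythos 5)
Your overall architecture (partial Fourier reduction, cutoffs for disconnected sublevel sets, explicit integrating-factor formulas with a choice of base point or direction of integration, and Fa\`a di Bruno bookkeeping against the Young conjugate $\varphi^\ast$) is the same as the paper's, and your sufficiency half is essentially Propositions~\ref{pror5}--\ref{pror7}. There is, however, a genuine gap on the necessity side. You assert that the condition $\alpha_0\in\mathbb{Z}^N$ in (I.2) is ``forced by testing against the constant kernel elements,'' and you fold the obstructions coming from $\alpha_0\notin\mathbb{Z}^N$, from $\dim\mathrm{span}\{b_1,\dots,b_N\}>1$, and from sign-changing $b_j$ into the same cutoff construction used for disconnected sublevels. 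This cannot work: when $b_{j0}=0$ for all $j$, some $b_j\not\equiv0$, and $\langle\xi,\alpha_0\rangle\notin\mathbb{Z}$, the corresponding Fourier modes of $\ker{}^tL$ are trivial (the would-be kernel element $\exp\{\int_0^t\langle\xi,\beta-i\alpha\rangle\,d\tau\}e^{i\langle\xi,x\rangle}$ fails to be $2\pi$-periodic), so every $f$ supported on those modes automatically lies in $(\ker{}^tL)^\circ$ and no duality test detects an obstruction. The actual obstruction (Propositions~\ref{pror1} and~\ref{pror3}) is quantitative: one builds $f\in(\ker{}^tL)^\circ$ whose \emph{unique} formal solution satisfies a lower bound $|\hat u(t_0,\xi(n))|\geq C/\sqrt{k_n}$, obtained from the stationary phase theorem (Theorem 7.7.5 in \cite{H1}) in the quasianalytic case and from Laplace's method in the non-quasianalytic case; since $\log t=o(\omega(t))$, this polynomial lower bound is incompatible with the exponential decay $e^{-\epsilon\omega(|\xi(n)|)}$ required of the partial Fourier coefficients of an element of $\mathcal{E}_{\{\omega\}}(\mathbb{T}^{N+1})$. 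Your proposal contains no substitute for this asymptotic lower-bound argument, which is the technical heart of the necessity proofs for (I.2) beyond connectedness and for (II.1)--(II.2).

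A smaller inaccuracy: for the disconnected sublevel set in (I.2) the paper does not exhibit an unreachable $f$; it applies the a priori inequality of Lemma~\ref{lr1} to a pair of sequences $f_n$, $v_n$ built from cutoffs supported away from, respectively inside, the two components of $\Omega_{r_0}^{\xi'}$, and reaches a contradiction from $0<(2\pi)^N\int f_0v_0\leq C e^{-(r_0-\epsilon)n/2}$. Your sketch of ``manufacturing $\mu\in\ker{}^tL$ and unreachable $f$'' should be replaced by this duality estimate, which is also the one place where non-quasianalyticity of $\omega$ is actually used.
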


\begin{remark} The assumption ``$\omega$ is non-quasianalytic'' is mainly used in the situation described in item (I.2), to prove that $L$ is not globally $\{\omega\}-$solvable in the presence of a disconnected sublevel set. We stress that the global solvability (closedness of the range) is not completely understood, even on the space of the real-analytic functions, which has better properties than the quasianalytic functions. 

Condition (II.1) means that the functions $b_j$ are real multiples of a smooth real-valued function $b$ defined on $\mathbb{T}^1.$  
\end{remark}

The various conditions appearing in items (I) and (II) in Theorem \ref{mtr2} present a well detailed way to check whether operator $L$ is globally $\{\omega\}-$solvable. As in \cite{BCG}, there exists a shorter and equivalent way to describe the global $\{\omega\}-$solvability.

\begin{theorem}\label{mtr2.1}
Suppose that $\omega$ is non-quasianalytic and subadditive. Operator $L$ given by {\em(\ref{L_introduction})} is globally $\{\omega\}-$solvable if and only if the following conditions hold:
\begin{itemize}
\item whenever $\langle\xi,\beta(t)\rangle$ changes sign, we must have $\langle\xi,\beta_0\rangle=0,$ $\langle\xi,\alpha_0\rangle\in\mathbb{Z}$ and $\Omega_r^{\xi}$ is connected, for all $r\in\mathbb{R}.$

\item the pair $(\alpha_{0},\beta_{0})$ satisfies $(EDC)_{2}^{\{\omega\}}$.
\end{itemize}
\end{theorem}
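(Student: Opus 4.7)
My plan is to derive Theorem \ref{mtr2.1} as a direct reformulation of Theorem \ref{mtr2}, matching the two bulleted conditions in the compact statement against the case-by-case trichotomy ``all $b_j\equiv 0$'' / ``$\beta_0=0$ but some $b_j\not\equiv 0$'' / ``some $b_{j0}\neq 0$''. The reverse direction is essentially bookkeeping; the substance lies in the forward direction when $\beta_0\neq 0$.

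For the reverse direction (Theorem \ref{mtr2} $\Rightarrow$ Theorem \ref{mtr2.1}): under (I.1) nothing is to verify; under (I.2), $\alpha_0\in\mathbb{Z}^N$ together with $\beta_0=0$ makes $\tau+\langle\xi,\alpha_0+i\beta_0\rangle$ an integer, so $(EDC)_2^{\{\omega\}}$ holds automatically, and the first bulleted condition follows from the hypothesized connectedness of every $\Omega_r^{\xi}$; under (II), conditions (II.1)--(II.2) say that each $b_j$ is a real multiple of a common nowhere-sign-changing function, so $\langle\xi,\beta(\cdot)\rangle$ never changes sign and the first bullet is vacuous, while the second is exactly (II.3).

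For the forward direction I split on $\beta$. When every $b_j\equiv 0$ we are in (I.1) and the second bullet supplies what is needed. When $\beta_0=0$ but some $b_{\ell}\not\equiv 0$, the zero-mean property forces $\langle\xi,\beta(\cdot)\rangle$ either to vanish identically (so that $\Omega_r^{\xi}$ is $\emptyset$ or $\mathbb{T}^1$, hence connected) or to change sign. Applying the first bullet to $\xi=e_j$ whenever $b_j\not\equiv 0$ gives $a_{j0}\in\mathbb{Z}$; for $j$ with $b_j\equiv 0$, applying it to $\xi=e_j+e_{\ell}$ (so that $\langle\xi,\beta(\cdot)\rangle=b_{\ell}$ changes sign) yields $a_{j0}+a_{\ell 0}\in\mathbb{Z}$, hence $a_{j0}\in\mathbb{Z}$. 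Combined with the connectedness hypothesis in the sign-changing cases, this recovers (I.2).

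The main obstacle is the case $b_{k0}\neq 0$ for some $k$. Applying the first bullet to $\xi=e_k$ immediately forbids $b_k$ from changing sign. To obtain (II.1), fix $j$ and assume for contradiction that $b_j\notin\mathbb{R}\,b_k$. Let $h(t):=b_j(t)-(b_{j0}/b_{k0})b_k(t)$; then $h$ has zero mean and is not identically zero, so $\sup h>0>\inf h$. By continuity on $\mathbb{T}^1$, for $|\delta|$ sufficiently small we still have $\sup(h-\delta b_k)>0>\inf(h-\delta b_k)$. Picking a rational $p/q=(b_{j0}/b_{k0})+\delta$ with $q>0$ and $\delta\neq 0$ small enough, the vector $\xi:=q e_j-p e_k\in\mathbb{Z}^N\setminus\{0\}$ satisfies $\langle\xi,\beta(\cdot)\rangle=q(h-\delta b_k)$ (which changes sign) and $\langle\xi,\beta_0\rangle=-q\delta b_{k0}\neq 0$, contradicting the first bullet. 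This forces (II.1); then (II.2) follows since each $b_j\in\mathbb{R}\,b_k$ inherits single-signedness from $b_k$, and (II.3) is the second bullet. The delicate step is this rational-perturbation argument, which exploits the openness of the strict sign-change condition together with density of the rationals to produce an integer $\xi$ that violates the compact condition whenever linear dependence among the $b_j$ fails.
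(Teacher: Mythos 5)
Your derivation is correct, and it is evidently the intended argument: the paper offers no explicit proof of Theorem \ref{mtr2.1}, presenting it only as an equivalent restatement of Theorem \ref{mtr2} (following the analogous reformulation in [BCG]), so deducing it by matching the two bullets against the trichotomy of Theorem \ref{mtr2} is exactly the right route. All steps check out, including the one genuinely nontrivial point --- the rational-perturbation argument producing $\xi=qe_j-pe_k$ with $\langle\xi,\beta(\cdot)\rangle$ sign-changing but $\langle\xi,\beta_0\rangle\neq0$, which forces $\dim\textrm{span}\{b_1,\ldots,b_N\}=1$ when some $b_{k0}\neq0$ --- as well as the observation that, when $\beta_0=0$, a non-identically-zero $\langle\xi,\beta(\cdot)\rangle$ must change sign, so the first bullet covers every index needed to recover (I.2).
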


Excluding the connectedness of the sublevel sets $\Omega_{r}^{\xi},$ the other properties concerning the coefficients $a_j$ and $b_j,$ which appear in Theorem \ref{mtr2},  are also linked to hypoellipticity.

\begin{theorem}\label{mtr1}
Let $L$ be given by {\em(\ref{L_introduction})} and suppose that $\omega$ is a subadditive weight function. Under these assumptions, the following conditions are equivalent:
\begin{itemize}
\item $L$ is strongly  $\{\omega\}-$solvable.
\item $L$ is globally  $\{\omega\}-$hypoelliptic. 
\item The functions $a_j$ and $b_j$ satisfy the following properties:
\begin{itemize}
\item[(1)] the functions $b_j$ do not change sign;

\item[(2)] $\dim\emph{\textrm{span}}\{b_1,\ldots,b_N\}\leq1$;

\item[(3)] the pair $(\alpha_{0},\beta_{0})$ satisfies $(EDC)_1^{\{\omega\}}$.
\end{itemize}
\end{itemize}
\end{theorem}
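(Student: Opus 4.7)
My strategy is to pass to partial Fourier series in the $x$-variable: writing $u(t,x)=\sum_{\xi\in\mathbb{Z}^N}\hat u(t,\xi)e^{i\langle x,\xi\rangle}$, the equation $Lu=f$ decouples into the family of ODEs
$$\partial_t\hat u(t,\xi)+i\langle\xi,\alpha(t)+i\beta(t)\rangle\hat u(t,\xi)=\hat f(t,\xi),\qquad \xi\in\mathbb{Z}^N.$$
By the partial Fourier characterization of $\mathcal{E}_{\{\omega\}}(\mathbb{T}^{N+1})$ from Section~2 of \cite{BFP} and its dual version, each of strong $\{\omega\}$-solvability, global $\{\omega\}$-hypoellipticity, and conditions (1)--(3) can be read off uniform Roumieu estimates in $t$ plus $\omega$-exponential decay in $\xi$ for the coefficients of $u$, $f$, and test distributions. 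I plan a cyclic implication (3 conditions) $\Rightarrow$ strong solvability $\Rightarrow$ hypoellipticity $\Rightarrow$ (3 conditions).

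For the sufficiency direction, condition (3) rules out resonance in every mode $\xi\ne 0$, so the unique periodic solution is given explicitly by
$$\hat u(t,\xi)=\frac{1}{e^{2\pi ic(\xi)}-1}\int_0^{2\pi}\exp\!\Bigl(-i\!\int_{t-s}^{t}\!\langle\xi,\alpha(\tau)+i\beta(\tau)\rangle\,d\tau\Bigr)\hat f(t-s,\xi)\,ds,$$
with $c(\xi)=\langle\xi,\alpha_0+i\beta_0\rangle$. Conditions (1)--(2) let me write $\beta(t)=\lambda b(t)$ with $b$ of constant sign, and then for $\xi$ in the half-space $\{\langle\xi,\lambda\rangle\ge 0\}$ the imaginary part of the phase has a definite sign, so the exponential integrating factor is uniformly bounded in $\xi$ (the opposite half-space is treated symmetrically by reversing the path of integration). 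Coupled with the $(EDC)_1^{\{\omega\}}$ estimate $|e^{2\pi ic(\xi)}-1|^{-1}\le C_\epsilon^{-1}e^{\epsilon\omega(|\xi|)}$ and a Faà di Bruno bound on $\partial_t^k$ of the exponential integrating factor, exploiting subadditivity of $\omega$ to split multi-factor estimates, this yields the required Roumieu decay of $\hat u(t,\xi)$. The $\xi=0$ mode is dispatched by integrating after imposing the zero-mean compatibility coming from $f\in(\ker{}^tL)^\circ$. Applying the same analysis to ${}^tL$ proves $\dim\ker{}^tL<\infty$, so strong $\{\omega\}$-solvability follows.

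For strong solvability $\Rightarrow$ hypoellipticity, I will apply the same explicit ODE representation to a $\mu\in\mathcal{E}'_{\{\omega\}}(\mathbb{T}^{N+1})$ with $L\mu\in\mathcal{E}_{\{\omega\}}(\mathbb{T}^{N+1})$: closed range guarantees that the resolvent formula transfers the Roumieu regularity of the right-hand side back to $\hat\mu(t,\xi)$ mode by mode. For hypoellipticity $\Rightarrow$ (3 conditions), I argue by contradiction, building distribution counterexamples from each possible failure. If (3) fails, choose $(\tau_k,\xi_k)\in\mathbb{Z}^{N+1}\setminus\{0\}$ along which $|\tau_k+\langle\xi_k,\alpha_0+i\beta_0\rangle|$ decays faster than every $e^{-\epsilon\omega(|(\tau_k,\xi_k)|)}$; after the standard conjugation reducing $L$ to its constant-coefficient principal part, a series supported on those frequencies produces $\mu\notin\mathcal{E}_{\{\omega\}}$ with $L\mu\in\mathcal{E}_{\{\omega\}}$. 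If (1) or (2) fails, then some $\xi\in\mathbb{Z}^N$ makes $\langle\xi,\beta(t)\rangle$ change sign, so the imaginary part of $A_\xi$ acquires interior maxima on $\mathbb{T}^1$; concentrating a distribution near such a maximum yields the counterexample.

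The main obstacle is the sufficiency step: the Roumieu bounds must close uniformly in both $k$ and $\xi$, which forces one to juggle Faà di Bruno on $\exp(-iA_\xi(\cdot))$, subadditivity of $\omega$, the $(EDC)_1^{\{\omega\}}$ control of $|e^{2\pi ic(\xi)}-1|^{-1}$, and the definite-sign phase simultaneously, while keeping the implicit constants independent of $\xi$. A secondary difficulty arises in the quasianalytic case of the necessity argument, where smooth cutoffs are unavailable: there I will follow the stationary-phase approach of \cite{AD,B}, combined with the isolated-zero property of quasianalytic functions, to manufacture the required counterexamples and extend Theorem~2.3 of \cite{B}.
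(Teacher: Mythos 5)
Your toolkit (partial Fourier decomposition, the explicit resolvent formula, $(EDC)_1^{\{\omega\}}$ together with the constant sign of $\langle\xi,\beta(t)\rangle$ to bound the integrating factor, Fa\`a di Bruno plus subadditivity for the Roumieu estimates, counterexamples for necessity) is essentially the paper's, and the implications ``(1)--(3) $\Rightarrow$ strong $\{\omega\}$-solvability'' and ``(1)--(3) $\Rightarrow$ global $\{\omega\}$-hypoellipticity'' are sound in outline. The gap is the link ``strong $\{\omega\}$-solvability $\Rightarrow$ global $\{\omega\}$-hypoellipticity.'' The justification you give --- closed range lets the resolvent formula ``transfer the Roumieu regularity of the right-hand side back to $\hat\mu(t,\xi)$ mode by mode'' --- is not an argument: the resolvent formula yields the required uniform decay of $\hat\mu(t,\xi)$ only after you know that $(EDC)_1^{\{\omega\}}$ holds and that each $\langle\xi,\beta(t)\rangle$ keeps a constant sign, i.e.\ only after you know (1)--(3). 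Closed range of $L$ on $\mathcal{E}_{\{\omega\}}(\mathbb{T}^{N+1})$ and finite-dimensionality of $\ker{}^tL$ give no a priori control on a distributional solution, and in your cyclic scheme the only way to extract (1)--(3) from strong solvability is precisely the chain you are in the middle of proving, so the step begs the question.

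There are two ways to repair this. (a) Prove directly that failure of any of (1)--(3) destroys strong $\{\omega\}$-solvability; this is the bulk of the paper's work (Propositions \ref{pror2}, \ref{pror1}, \ref{pror3} and \ref{pror4}, plus the observation that $\beta_0=0$ with some $b_j\not\equiv0$ forces $\dim\ker{}^tL=\infty$), and these non-solvability counterexamples --- the cutoff constructions fed into the a priori inequality of Lemma \ref{lr1} in the non-quasianalytic case, stationary phase in the quasianalytic one --- are entirely absent from your plan, which only builds counterexamples to hypoellipticity. (b) Reverse the arrow, as the paper does: Lemma \ref{lemfinal} shows by a closed-graph argument (Ara\'ujo's Theorem 2.5, using ${}^tL=-L$ and an auxiliary weight $\sigma=o(\omega)$) that global $\{\omega\}$-hypoellipticity implies strong $\{\omega\}$-solvability, so the necessity of (1)--(3) for hypoellipticity is inherited from their necessity for strong solvability, and your direct hypoellipticity counterexamples become redundant. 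Either fix works; as written, however, neither ``strong solvability $\Rightarrow$ (1)--(3)'' nor ``strong solvability $\Rightarrow$ hypoellipticity'' is established, so the three-way equivalence does not close.
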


As before, there exists a shorter way to state Theorem \ref{mtr1}.

\begin{theorem}
Let $L$ be given by {\em(\ref{L_introduction})} and suppose that $\omega$ is a subadditive weight function. Under these assumptions, the following conditions are equivalent:
\begin{itemize}
\item $L$ is strongly  $\{\omega\}-$solvable.
\item $L$ is globally  $\{\omega\}-$hypoelliptic. 
\item for each $\xi,$ the function $t\mapsto\langle\xi,\beta(t)\rangle$ does not change sign, and the pair $(\alpha_{0},\beta_{0})$ satisfies $(EDC)_{1}^{\{\omega\}}.$
\end{itemize}
\end{theorem}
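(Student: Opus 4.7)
The plan is to derive this statement as an equivalent reformulation of Theorem~\ref{mtr1}, which already establishes the equivalence of strong $\{\omega\}$-solvability, global $\{\omega\}$-hypoellipticity, and conditions (1)--(3) on the coefficients. Since condition (3) is verbatim the $(EDC)_{1}^{\{\omega\}}$ statement appearing in the new theorem, it suffices to show that the conjunction of (1) and (2) in Theorem~\ref{mtr1} is equivalent to the single requirement ``for every $\xi\in\mathbb{Z}^{N}$, the map $t\mapsto\langle\xi,\beta(t)\rangle$ does not change sign.''

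For the forward direction, assume (1) and (2) hold. If every $b_{j}\equiv 0$ the conclusion is trivial; otherwise, since $\dim\textrm{span}\{b_{1},\ldots,b_{N}\}=1$, there exist a nonzero real-valued $b\in\mathcal{E}_{\{\omega\}}(\mathbb{T}^{1})$ and real scalars $\lambda_{1},\ldots,\lambda_{N}$ with $b_{j}=\lambda_{j}b$. Choosing an index $j$ with $\lambda_{j}\neq 0$, condition (1) forces $b$ itself to have constant sign. Consequently $\langle\xi,\beta(t)\rangle=\bigl(\sum_{j}\lambda_{j}\xi_{j}\bigr)b(t)$ is a real scalar multiple of $b$ and therefore does not change sign.

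For the converse, specializing $\xi=\pm e_{j}$ immediately yields (1). To establish (2) I argue by contradiction: if $\dim\textrm{span}\{b_{1},\ldots,b_{N}\}\geq 2$, I choose indices $i,j$ with $b_{i},b_{j}$ linearly independent. Invoking (1) and absorbing signs into the corresponding components of $\xi$, I may assume $b_{i},b_{j}\geq 0$ on $\mathbb{T}^{1}$. Linear independence produces points $t_{0},s_{0}\in\mathbb{T}^{1}$ with $b_{i}(t_{0})b_{j}(s_{0})\neq b_{i}(s_{0})b_{j}(t_{0})$; a brief case analysis on which of the four nonnegative values involved vanish then yields integers $p,q$ with $q>0$ such that $qb_{i}-pb_{j}$ is strictly positive at one of the two points and strictly negative at the other. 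Taking $\xi=qe_{i}-pe_{j}$ contradicts the hypothesis, completing the argument.

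All analytic and Diophantine content is already packaged inside Theorem~\ref{mtr1}, whose proof is carried out in Section~\ref{sec6}; the only step here is the elementary linear-algebra case analysis, which presents no substantial obstacle.
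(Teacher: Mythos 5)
Your proposal is correct and follows exactly the route the paper intends: the paper states this theorem without proof as a reformulation of Theorem~\ref{mtr1}, and you supply the only missing ingredient, namely the elementary verification that conditions (1) and (2) of Theorem~\ref{mtr1} are jointly equivalent to ``$\langle\xi,\beta(t)\rangle$ does not change sign for every $\xi\in\mathbb{Z}^{N}$.'' Your case analysis in the converse (reducing to nonnegative $b_i,b_j$, using linear independence to get $b_i(t_0)b_j(s_0)\neq b_i(s_0)b_j(t_0)$, and choosing a rational $p/q$ separating the two ratios) is sound.
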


Although the statement of the results in this article are similar to the ones in \cite{BDG1, BDG3} we stress that their proofs present significant novelties and improvements. For instance, we invite the reader to compare the proofs of propositions \ref{pror2} and \ref{pror4} to the proofs of propositions 4.1 and 4.3 (concerning item III.3)  in \cite{BDG3}.

\section{Preliminaries}\label{sec3}

In this section, we present the space of periodic ultradifferentiable functions of Roumieu type, as well as its main properties which we will use throughout this article.  
 
We start recalling the definition of weight function.

\begin{definition}\label{defr1}An increasing continuous function $\omega:[0,\infty)\rightarrow[0,\infty)$ is said to be a weight function if the following properties hold:
\begin{itemize}
\item there exists $K\geq0$ such that $\omega(2t)\leq K(\omega(t)+1)$ for all $t\geq0,$
\item $\omega(t)=O(t)$ as $t$ tends to $\infty,$
\item $\log(t)=o(\omega(t))$ as $t$ tends to $\infty,$
\item $\varphi(t)\doteq\omega(e^t),$ for $t\geq0$ is convex.
\end{itemize}
\end{definition} 
 
We say that a weight function $\omega$ is quasianalytic if \[\int_{1}^\infty\frac{\omega(t)}{t^2}dt=\infty.\] On the other hand, if the above integral is finite, then $\omega$ is said to be a non-quasianalytic weight function. 

Given two weight functions $\sigma$ and $\omega,$ the notation $\sigma\preceq\omega$ means $\omega(t)=O(\sigma(t)),$ as $t$ tends to $\infty.$
We say that $\sigma$ and $\omega$ are equivalent if $\sigma\preceq\omega$ and $\omega\preceq\sigma.$ In other words, two weight functions $\omega$ and $\sigma$ are equivalent if \[0<\liminf_{t\rightarrow\infty}\frac{\omega(t)}{\sigma(t)}\leq\limsup_{t\rightarrow\infty}\frac{\omega(t)}{\sigma(t)}<\infty.\]
 
A weight function $\omega$ is equivalent to a subadditive weight function if and only if 
\begin{equation}\label{swf} \exists \ D>0, \exists \ t_0>0 \ \textrm{such that } \ \omega(\lambda t) \leq \lambda D\omega(t), \ \textrm{for all} \ \lambda\geq1 \ \textrm{and} \  t\geq t_0. 
\end{equation}

The Young conjugate $\varphi^\ast$ associated to a weight function $\omega$ is defined by $ \varphi^\ast(t)=\sup_{s\geq0}\{st-\varphi(s)\}.$
 
There is no loss of generality in assuming that $\omega$ vanishes on $[0,1].$ In this case, the following properties holds:
\begin{itemize} 
\item $\varphi^\ast([0,\infty))\subset[0,\infty),$
\item $\varphi^\ast$ is convex, superadditive, and $\varphi^\ast(0)=0,$
\item $\varphi^\ast(t)/t$ is increasing and tends to $\infty$ as $t\rightarrow\infty,$
\item $\varphi^{\ast\ast}=\varphi.$
\end{itemize} 
 
Since $\varphi^\ast$ is convex and $\varphi^\ast(0)=0,$ it follows that 
\[\varphi^{\ast}(t)+\varphi^\ast(s)\leq \varphi^\ast(t+s)\leq 2^{-1}[ \varphi^\ast(2t)+\varphi^\ast(2s)],
\] for all $t,s>0.$
 
In addition, when the weight function is subadditive, for each $\lambda>0$ and $j,k\in\mathbb{N}$ we have (see \cite{FGJ} Lemma 3.3)
\begin{equation}\label{eqr2}
e^{\lambda^{-1}\varphi^{\ast}(j\lambda)}e^{\lambda^{-1}\varphi^{\ast}(k\lambda)}\leq \frac{j!k!}{(j+k)!}e^{\lambda^{-1}\varphi^{\ast}((j+k)\lambda)}.
\end{equation}  
 
The space of periodic ultradifferentiable functions of Roumieu type is given by 
\[\mathcal{E}_{\{\omega\}}(\mathbb{T}^n)=\{f\in\mathcal{C}^\infty(\mathbb{T}^n); \ \|f\|_{\lambda}<\infty, \ \textrm{for some} \ \lambda>0\},\] in which
\[\|f\|_{\lambda}=\sup_{x\in\mathbb{T}^n}\sup_{\alpha\in\mathbb{Z}_{+}^{n}}|\partial^\alpha f(x)|\exp\left(-\lambda\varphi^\ast\left(\frac{|\alpha|}{\lambda}\right)\right).\]
 
When $\sigma\preceq\omega$ we have $\mathcal{E}_{\{\sigma\}}(\mathbb{T}^n)\subseteq\mathcal{E}_{\{\omega\}}(\mathbb{T}^n).$ 
 
In our approach to study the existence of solutions of \eqref{L_introduction}, the use of partial Fourier series leads us to solve certain linear ordinary differential equations. With this in mind, it is crucial that each space $\mathcal{E}_{\{\omega\}}(\mathbb{T}^n)$ contains $\mathcal{E}_{\{t\}}(\mathbb{T}^n)\doteq\mathcal{A}(\mathbb{T}^n),$ the real analytic functions. Since $\omega(t)=O(t),$ as $t$ tends to $\infty,$ we have $\mathcal{A}(\mathbb{T}^n)\subseteq\mathcal{E}_{\{\omega\}}(\mathbb{T}^n).$  
 
The usual topology on $\mathcal{E}_{\{\omega\}}(\mathbb{T}^n)$ is the inductive limit topology \[\mathcal{E}_{\{\omega\}}(\mathbb{T}^n)=\mbox{ind}\lim_{k\in\mathbb{N}}\mathcal{E}_{\{\omega\},k}(\mathbb{T}^n),\] with 
\[\mathcal{E}_{\{\omega\},k}(\mathbb{T}^n)=\{f\in\mathcal{C}^\infty(\mathbb{T}^n); \ \|f\|_{1/k}<\infty\}.\] 
 
We notice that $(\mathcal{E}_{\{\omega\},k}(\mathbb{T}^n),\|\cdot\|_{1/k})$ is a Banach space. In addition, for $k_1<k_2$ the inclusion $\mathcal{E}_{\{\omega\},k_1}(\mathbb{T}^n)\hookrightarrow\mathcal{E}_{\{\omega\},k_2}(\mathbb{T}^n)$ is compact. It follows that $\mathcal{E}_{\{\omega\}}(\mathbb{T}^n)$ is a (Hausdorff) locally convex TVS and $F\subset \mathcal{E}_{\{\omega\}}(\mathbb{T}^n)$ is closed if and only if $F\cap\mathcal{E}_{\{\omega\},k}(\mathbb{T}^n)$ is a closed set in $\mathcal{E}_{\{\omega\},k}(\mathbb{T}^n)$, for all $k\in\mathbb{Z}_+.$
 
\bigskip 

We recall that in the category of smooth functions the global hypoellipticity of $L$ implies that $L$ is strongly solvable (since $-L={}^tL$). This result also holds true in Gevrey classes (see \cite{AZ}). In \cite{FPV}, by following the approach of \cite{Ar}, this result was proved to the space of ultradifferentiable functions in the sense of Denjoy-Carleman. We also use the approach of \cite{Ar} to prove this result when $L$ acts on $\mathcal{E}_{\{\omega\}}(\mathbb{T}^{N+1}).$

\begin{lemma}\label{lemfinal} If ${}^tL:\mathcal{E}'_{\{\omega\}}(\mathbb{T}^{N+1})\rightarrow\mathcal{E}'_{\{\omega\}}(\mathbb{T}^{N+1})$ is globally $\{\omega\}-$hypoelliptic, then $L:\mathcal{E}_{\{\omega\}}(\mathbb{T}^{N+1})\rightarrow\mathcal{E}_{\{\omega\}}(\mathbb{T}^{N+1})$ is strongly $\{\omega\}-$ solvable.
\end{lemma}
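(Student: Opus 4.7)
Let $E = \mathcal{E}_{\{\omega\}}(\mathbb{T}^{N+1})$ and $E_k = \mathcal{E}_{\{\omega\},k}(\mathbb{T}^{N+1})$. I plan to follow the functional-analytic scheme introduced in \cite{Ar} (and used in \cite{FPV}), exploiting that $E = \varinjlim E_k$ with compact linking maps, so $E$ is a reflexive Montel (DFS)-space and the strong dual $E'$ is Fr\'echet--Schwartz. To establish strong $\{\omega\}$-solvability of $L$, I need to verify two things: (a) $\ker\,{}^tL$ is finite-dimensional in $E'$; (b) $LE$ is a closed subspace of $E$.

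Step 1 (finite-dimensional kernel). Any $\mu\in\ker\,{}^tL$ satisfies ${}^tL\mu=0\in E$, so global $\{\omega\}$-hypoellipticity forces $\mu\in E$. Thus $\ker\,{}^tL$ is a closed subspace of the Fr\'echet space $E'$ which is algebraically contained in $E$. By the closed-graph theorem (Fr\'echet into (LB)), there exists $k_0$ such that the inclusion $\ker\,{}^tL\hookrightarrow E_{k_0}$ is continuous. Since $E_{k_0}\hookrightarrow E_{k_1}$ is compact for any $k_1>k_0$, the closed unit ball of $\ker\,{}^tL$ (in its Fr\'echet topology) is relatively compact in itself, so Riesz's lemma yields $\dim\ker\,{}^tL<\infty$.

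Step 2 (closed range). Here I would rely on the Dieudonn\'e--Schwartz duality theorem: for continuous operators between reflexive (DFS)-spaces, $LE$ is closed in $E$ if and only if ${}^tL(E')$ is (weakly) closed in $E'$. The task thus reduces to showing ${}^tL$ has closed range in $E'$. Given a sequence ${}^tL\mu_n\to\nu$ in $E'$, I would use the finite-dimensional splitting $E'=\ker\,{}^tL\oplus V$ from Step 1 to pass to representatives $\tilde\mu_n\in V$ with the same image. On $V$ the map ${}^tL|_V$ is injective and continuous; GH together with the Schwartz/Montel structure of $E'$ should prevent the $\tilde\mu_n$ from escaping to infinity, and extracting a convergent subsequence $\tilde\mu_n\to\mu$ gives ${}^tL\mu=\nu$, so $\nu\in{}^tL(E')$.

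The principal obstacle is step 2: turning the \emph{qualitative} hypothesis of global $\{\omega\}$-hypoellipticity into the \emph{quantitative} a priori control needed to extract a limit. Finite-dimensionality of the kernel is routine once GH is in hand; closed range, by contrast, reduces in essence to an open-mapping argument on the Fr\'echet--Schwartz space $V$, requiring one to verify that every bounded subset of ${}^tL(V)$ is the image of a bounded subset of $V$. This is where the Montel and Schwartz properties of $E'$ (compactness of bounded sets, completeness, reflexivity) are decisive, and where the reasoning parallels the one in \cite{Ar, FPV}; carrying it out carefully in the Roumieu setting $\mathcal{E}_{\{\omega\}}$ is the technical heart of the proof.
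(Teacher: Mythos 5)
Your overall framework (the abstract scheme of \cite{Ar} and \cite{FPV} on the (DFS)-space $E=\varinjlim E_k$) is the same one the paper uses, and your Step 1 is essentially sound: once global $\{\omega\}$-hypoellipticity places $\ker{}^tL$ inside $E$, the standard Fr\'echet/(DF)--Montel comparison of topologies forces finite dimensionality (though a Fr\'echet space has no ``closed unit ball''; the clean statement is that $\ker{}^tL$ carries two comparable complete topologies, is therefore normable and Montel, hence finite-dimensional). The problem is Step 2. You reduce closed range of $L$ to closed range of ${}^tL$ on the Fr\'echet space $E'$ and then assert that hypoellipticity ``should prevent the $\tilde\mu_n$ from escaping to infinity.'' That is exactly the a priori bound that has to be \emph{proved}, and you give no mechanism for extracting it: global $\{\omega\}$-hypoellipticity is a statement only about those $\mu$ with ${}^tL\mu\in\mathcal{E}_{\{\omega\}}$, so it says nothing directly about a general sequence $\tilde\mu_n\in E'$ with ${}^tL\tilde\mu_n$ convergent in $E'$. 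As written, the ``technical heart'' you point to is simply missing, so the proposal does not constitute a proof.

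For comparison, the paper closes this gap without ever dualizing. It chooses a subadditive weight $\sigma$ with $\sigma=o(\omega)$ (via Lemma 1.7 of \cite{BMT}), so that $\mathcal{E}_{\{\omega\}}\hookrightarrow\mathcal{E}_{\{\sigma\},1}\hookrightarrow\mathcal{E}_{\{\sigma\}}\hookrightarrow\mathcal{E}'_{\{\omega\}}$ continuously, and observes that hypoellipticity gives exactly
\[
\Gamma=\tilde\Gamma\cap\bigl(\mathcal{E}_{\{\sigma\}}(\mathbb{T}^{N+1})\times\mathcal{E}_{\{\omega\}}(\mathbb{T}^{N+1})\bigr),
\]
where $\Gamma$, $\tilde\Gamma$ are the graphs of $L$ over $\mathcal{E}_{\{\omega\}}$ and $\mathcal{E}_{\{\sigma\}}$ respectively; hence $\Gamma$ is closed for the \emph{coarser} topology on the first factor. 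Theorem 2.5 of \cite{Ar}, applied with $E_0=\mathcal{E}_{\{\sigma\},1}(\mathbb{T}^{N+1})$, is precisely the abstract device that converts this coarse-topology closed-graph property into closed range of $L$ together with a finitely generated kernel (the quantitative estimate you are missing is produced there by a Baire/Grothendieck factorization argument across the inductive spectrum, not by Montel compactness in $E'$). If you want to keep your dual-side Step 2, you would have to reproduce an equivalent open-mapping argument yourself; otherwise the correct move is to locate where hypoellipticity actually enters --- namely in the identity for $\Gamma$ above --- and then invoke the cited theorem.
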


\begin{proof} By Lemma 1.7 and Remark 1.8 in \cite{BMT} we may pick a subadditive weight function $\sigma$ such that $\sigma(t)=o(\omega(t)).$ Then for each $k$ we have the continuous inclusion 
$\mathcal{E}_{\{\omega\},k}(\mathbb{T}^{N+1})\hookrightarrow\mathcal{E}_{\{\sigma\},1}(\mathbb{T}^{N+1}).$ In particular, the inclusion $\mathcal{E}_{\{\omega\}}(\mathbb{T}^{N+1})\hookrightarrow\mathcal{E}_{\{\sigma\},1}(\mathbb{T}^{N+1})$ is continuous. 

It follows that \[\mathcal{E}_{\{\omega\}}(\mathbb{T}^{N+1})\hookrightarrow \mathcal{E}_{\{\sigma\}}(\mathbb{T}^{N+1})\hookrightarrow\mathcal{E}'_{\{\sigma\}}(\mathbb{T}^{N+1})\hookrightarrow\mathcal{E}'_{\{\omega\}}(\mathbb{T}^{N+1}).\] 

Since $L:\mathcal{E}_{\{\sigma\}}(\mathbb{T}^{N+1})\rightarrow\mathcal{E}_{\{\sigma\}}(\mathbb{T}^{N+1})$ is continuous, its graph $\tilde{\Gamma}=\{(u,Lu); u\in \mathcal{E}_{\{\sigma\}}(\mathbb{T}^{N+1})\}$ is a closed subspace of $\mathcal{E}_{\{\sigma\}}(\mathbb{T}^{N+1})\times \mathcal{E}_{\{\sigma\}}(\mathbb{T}^{N+1}).$  

Denoting the graph of $L:\mathcal{E}_{\{\omega\}}(\mathbb{T}^{N+1})\rightarrow\mathcal{E}_{\{\omega\}}(\mathbb{T}^{N+1})$ by $\Gamma$ and using the assumption that $L$ is globally $\{\omega\}-$hypoelliptic, we see that \[\Gamma=\tilde{\Gamma}\cap\left(\mathcal{E}_{\{\sigma\}}(\mathbb{T}^{N+1})\times \mathcal{E}_{\{\omega\}}(\mathbb{T}^{N+1})\right).\]  

Therefore, $\Gamma$ is a closed subspace of $\mathcal{E}_{\{\sigma\}}(\mathbb{T}^{N+1})\times\mathcal{E}_{\{\omega\}}(\mathbb{T}^{N+1}).$

Finally, by Theorem 2.5 in \cite{Ar} (with $E_0=\mathcal{E}_{\{\sigma\},1}(\mathbb{T}^{N+1}))$ it follows that the range $L\mathcal{E}_{\{\omega\}}(\mathbb{T}^{N+1})$ is closed in $\mathcal{E}_{\{\omega\}}(\mathbb{T}^{N+1})$ and $\ker L$ is finitely generated. Since ${}^tL=-L$ and ${}^tL$ is globally $\{\omega\}-$hypoelliptic, it follows that $\ker{}^tL$ is finitely generated. Therefore, $L$ is strongly $\{\omega\}-$ solvable.
\end{proof}

\bigskip
 
The following version of Lemma 3.10 in \cite{P1} and Lemma 6.1.2 in \cite{H}  will be used to link the connectedness of  sublevel sets with the global $\{\omega\}-$solvability. 

\begin{lemma}\label{lr1} If a linear partial differential operator $L,$ with coefficients in $\mathcal{E}_{\{\omega\}}(\mathbb{T}^{N+1}),$ is globally $\{\omega\}-$solvable, then for every $h>0$ and $k>0$ there exists $C>0$ such that 
\[\left|\int_{\mathbb{T}^{N+1}}fv\right|\leq C\|f\|_{1/h}\|{}^tLv\|_{1/k},\] for all $f\in(\ker{}^tL)^\circ\cap\mathcal{E}_{\{\omega\},h}(\mathbb{T}^{N+1})$ and $v\in\mathcal{E}_{\{\omega\}}(\mathbb{T}^{N+1})$ such that ${}^tLv\in\mathcal{E}_{\{\omega\},k}(\mathbb{T}^{N+1}).$
\end{lemma}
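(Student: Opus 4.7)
The plan is to convert the closed-range hypothesis into a quantitative solvability estimate for $L$, and then deduce the duality bound by a short integration-by-parts argument. More precisely: given $f\in(\ker{}^tL)^\circ\cap\mathcal{E}_{\{\omega\},h}(\mathbb{T}^{N+1})$, I want to produce a solution $u\in\mathcal{E}_{\{\omega\},h'}(\mathbb{T}^{N+1})$ to $Lu=f$ with $\|u\|_{1/h'}\leq C'\|f\|_{1/h}$ for constants $h',C'$ depending only on $h$. Once such $u$ is in hand, the identity
\[
\int_{\mathbb{T}^{N+1}}fv=\int_{\mathbb{T}^{N+1}}(Lu)v=\int_{\mathbb{T}^{N+1}}u\,({}^tLv)
\]
(valid by the definition of the transpose for $v\in\mathcal{E}_{\{\omega\}}(\mathbb{T}^{N+1})$) together with the fact that the Roumieu norms dominate the sup-norm (taking $\alpha=0$ and using $\varphi^\ast(0)=0$) yields
\[
\left|\int_{\mathbb{T}^{N+1}}fv\right|\leq (2\pi)^{N+1}\|u\|_{1/h'}\|{}^tLv\|_{1/k}\leq (2\pi)^{N+1}C'\|f\|_{1/h}\|{}^tLv\|_{1/k},
\]
which is the claimed estimate with $C=(2\pi)^{N+1}C'$.

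To produce $u$ with uniform control, I would use the DFS-space structure recorded in Section \ref{sec3}: $\mathcal{E}_{\{\omega\}}(\mathbb{T}^{N+1})=\mbox{ind}\lim_k \mathcal{E}_{\{\omega\},k}(\mathbb{T}^{N+1})$ is an inductive limit of Banach spaces with compact inclusions, so it is a complete, ultrabornological, webbed space. The subspace $(\ker{}^tL)^\circ$ is closed (by the global $\{\omega\}$-solvability assumption) and hence inherits this structure, and the same holds for the Hausdorff quotient $\mathcal{E}_{\{\omega\}}(\mathbb{T}^{N+1})/\ker L$ (note $\ker L$ is closed since $L$ is continuous). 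The induced map $\bar{L}:\mathcal{E}_{\{\omega\}}(\mathbb{T}^{N+1})/\ker L\to (\ker{}^tL)^\circ$ is a continuous bijection between DFS-spaces, so De Wilde's open mapping theorem (or the corresponding result in \cite{Ar}) forces $\bar{L}^{-1}$ to be continuous. Consequently, $\bar L^{-1}$ sends bounded sets to bounded sets. Applying this to the bounded set $B_h=\{f\in(\ker{}^tL)^\circ\cap\mathcal{E}_{\{\omega\},h}(\mathbb{T}^{N+1}):\|f\|_{1/h}\leq1\}$ and using the characterization of bounded sets in a DFS-space (contained and bounded in a single step of the inductive limit), one obtains some $h'>0$ and some $C'>0$ such that $\bar{L}^{-1}(B_h)$ lies inside the ball of radius $C'$ of $\mathcal{E}_{\{\omega\},h'}(\mathbb{T}^{N+1})/\ker L$. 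Lifting, for each $f\in B_h$ there is $u\in\mathcal{E}_{\{\omega\},h'}(\mathbb{T}^{N+1})$ with $Lu=f$ and $\|u\|_{1/h'}\leq C'$, and the linear rescaling yields the estimate $\|u\|_{1/h'}\leq C'\|f\|_{1/h}$ for general $f\in(\ker{}^tL)^\circ\cap\mathcal{E}_{\{\omega\},h}(\mathbb{T}^{N+1})$.

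The main subtlety is the second step, that is, passing from the abstract continuity of $\bar L^{-1}$ (guaranteed by an open mapping theorem in the DFS category) to a quantitative bound relating the concrete Banach-step norm $\|\cdot\|_{1/h}$ on $(\ker{}^tL)^\circ$ to some Banach-step norm $\|\cdot\|_{1/h'}$ on the quotient. This hinges on the fact that in a DFS inductive limit every bounded set is absorbed by some step, and on the closedness of $\ker L$ in each Banach step $\mathcal{E}_{\{\omega\},h'}(\mathbb{T}^{N+1})$ so that the quotient norm agrees with the factored Banach norm. Once this quantitative right-inverse is available, the integration-by-parts estimate above is routine and completes the proof.
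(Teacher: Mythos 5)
Your argument is correct, but it reaches the estimate by a genuinely different route than the paper. You first upgrade the closed-range hypothesis to a quantitative right inverse — $\bar L:\mathcal{E}_{\{\omega\}}(\mathbb{T}^{N+1})/\ker L\to(\ker{}^tL)^\circ$ is a continuous bijection of DFS-spaces, hence a topological isomorphism by the open mapping theorem for webbed/ultrabornological spaces, and regularity of DFS inductive limits (bounded sets sit in a single Banach step, and lift through the quotient) converts this into $\|u\|_{1/h'}\leq C'\|f\|_{1/h}$ for some step $h'$ — and only then integrate by parts. The paper avoids all of this machinery: it only uses the \emph{qualitative} consequence of solvability (for each $f\in(\ker{}^tL)^\circ$ \emph{some} $u$ with $Lu=f$ exists, with no norm control), defines the bilinear form $B(f,\bar v)=\langle f,v\rangle$ on the Banach space $\mathcal{B}=(\ker{}^tL)^\circ\cap\mathcal{E}_{\{\omega\},h}(\mathbb{T}^{N+1})$ times the normed space $\mathcal{N}=V/(V\cap\ker{}^tL)$ with $\|\bar v\|_{\mathcal{N}}=\|{}^tLv\|_{1/k}$, checks separate continuity in each slot (the $\bar v$-slot via the same identity $\int fv=\int u\,({}^tLv)$ you use), and invokes the classical theorem that a separately continuous bilinear form on a Banach space times a metrizable space is jointly continuous. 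So the paper trades your open-mapping/quotient-lifting analysis for a single application of the uniform boundedness principle; your route is heavier but yields strictly more, namely a solution operator bounded between explicit Banach steps, which is of independent interest. Two small remarks: $(\ker{}^tL)^\circ$ is closed simply because it is an intersection of kernels of continuous functionals — solvability is not needed for that, only for the surjectivity of $\bar L$ onto it; and when you localize the bounded set $\bar L^{-1}(B_h)$ in a step of the quotient you should say explicitly that the steps of $\mathcal{E}_{\{\omega\}}(\mathbb{T}^{N+1})/\ker L$ are the Banach quotients $\mathcal{E}_{\{\omega\},k}(\mathbb{T}^{N+1})/(\ker L\cap\mathcal{E}_{\{\omega\},k}(\mathbb{T}^{N+1}))$, whose linking maps remain compact, so the quotient is again a regular DFS limit and representatives can be chosen with norm at most $(1+\epsilon)$ times the quotient norm.
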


\begin{proof}Define \[V=\{v\in\mathcal{E}_{\{\omega\}}(\mathbb{T}^{N+1});^tLv\in\mathcal{E}_{\{\omega\},k}(\mathbb{T}^{N+1})\}\] and \[\mathcal{N}=\frac{V}{V\cap\ker{}^tL}.\] On $\mathcal{N}$ we consider the norm \[\|\bar{v}\|_\mathcal{N}=\|{}^tLv\|_{1/k}.\]

We also consider the Banach space $\mathcal{B}=((\ker{}^tL)^\circ\cap\mathcal{E}_{\{\omega\},h}(\mathbb{T}^{N+1}),\|\cdot\|_{1/h}).$ 

The bilinear form $B:\mathcal{B}\times\mathcal{N}\rightarrow\mathbb{C},$ given by $B(f,\bar{v})=\langle f,v\rangle,$ is well-defined. 

Notice that, for $(f,\bar{v})\in\mathcal{B}\times\mathcal{N}$ we have 
\[|B(f,\bar{v})|\leq (2\pi)^{N+1}\|f\|_\infty\|v\|_{\infty}\leq(2\pi)^{N+1}\|f\|_{1/h}\|v\|_{\infty},\] and picking $u$ such that $Lu=f,$ we have
\[|B(f,\bar{v})|=|\langle u,{}^tLv\rangle|\leq (2\pi)^{N+1}\|u\|_\infty\|{}^tLv\|_{1/k}.\] 

Above estimates imply that $B$ is separately continuous. It follows that $B$ is continuous. Hence, there exists $C>0$ such that \[|B(u,\bar{v})|\leq C\|f\|_{1/h}\|\bar{v}\|_\mathcal{N}=C\|f\|_{1/h}\|{}^tLv\|_{1/k}.\]

\end{proof}

Denoting by (t,x) the coordinates in $\mathbb{T}^{n+m},$ it follows that $\psi\in\mathcal{E}_{\{\omega\}}(\mathbb{T}^{n+m})$ if and only if there exist constants $C,\lambda,\epsilon>0$ such that 
\begin{equation}\label{eqr10}|\partial_t^{\alpha}\hat{\psi}(t,\xi)|\leq C\exp\left\{\lambda\varphi^\ast\left(|\alpha|\lambda^{-1}\right)\right\}\exp\{-\epsilon\omega(|\xi|)\},\end{equation} for all $t\in\mathbb{T}^n,$ for all $\alpha\in\mathbb{Z}_+^n$ and for all $\xi\in\mathbb{Z}^m$ (see Theorem 2.3 in \cite{BFP}).

In addition, for $\mu\in\mathcal{E}_{\{\omega\}}'(\mathbb{T}^{n+m}),$ we have \[\mu=\sum_{\xi\in\mathbb{Z}^m}\hat{\mu}(t,\xi)\otimes\exp(i\langle x,\xi\rangle),\] in which the sequence $\hat{\mu}(t,\xi)\subset\mathcal{E}_{\{\omega\}}'(\mathbb{T}^{n})$ satisfies: given $\epsilon>0$ and $k\in\mathbb{N}$ there exists $C_{\epsilon,k}>0$ such that 
\[|\langle\hat{\mu}(t,\xi),\theta(t)\rangle|\leq C_{\epsilon,k}\|\theta\|_{1/k}\exp\{\epsilon\omega(|\xi|)\},\] for all $\xi\in\mathbb{Z}^m$ and for all $\theta\in\mathcal{E}_{\{\omega\},k}(\mathbb{T}^{n}).$

\section{Global $\{\omega\}-$solvability - necessary conditions}\label{sec4}

In this section we present results concerning the existence of solutions to the equation $Lu=f.$ 

The first result shows that $L$ is not globally $\{\omega\}-$solvable in the presence of a disconnected sublevel set $\Omega_r^{\xi}$ (see \eqref{eqr7}) and in the case of a non-quasianalytic weight function. The quasianalytic version will not be treated here, since neither on the real analytic class the existence of solutions is completely understood in the presence of a disconnected sublevel set $\Omega_r^{\xi}.$  

\begin{proposition}\label{pror2} Suppose that $\{\omega\}$ is a non-quasianalytic subadditive weight function. If $b_{j0}=0,$ for all $j$ and at least one $b_k$ does not vanish identically, and $(a_{10},\ldots,a_{N0})\in\mathbb{Z}^N,$ then the existence of a disconnected sublevel set $\Omega_r^{\xi}$ implies that $L,$ given by \eqref{L_introduction}, is not globally $\{\omega\}-$solvable.
\end{proposition}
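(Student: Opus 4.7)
The approach is to derive a contradiction from Lemma \ref{lr1} by building, under the hypothesis of global $\{\omega\}$-solvability, a pair of sequences that violate the necessary estimate. Fix $\xi_0\in\mathbb{Z}^N$ and $r\in\mathbb{R}$ for which $\Omega_r^{\xi_0}$ is disconnected, and set $A(t)=\int_0^t\langle\xi_0,\alpha(\tau)\rangle\,d\tau$ and $B(t)=\int_0^t\langle\xi_0,\beta(\tau)\rangle\,d\tau$. Under the standing hypotheses $\beta_0=0$ and $\alpha_0\in\mathbb{Z}^N$, the function $e^{nB(t)-inA(t)}$ is $2\pi$-periodic for every $n\in\mathbb{Z}$; moreover, for each $\eta\in\mathbb{Z}^N$ the element $\mu_\eta(t,x)=e^{-iA_\eta(t)+B_\eta(t)}e^{i\langle x,\eta\rangle}$ (with $A_\eta,B_\eta$ defined analogously) lies in $\ker{}^tL$, since ${}^tL=-L$.

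Pick distinct components $J_1,J_2$ of $\Omega_r^{\xi_0}$. Because $\omega$ is non-quasianalytic, there exist non-negative $\phi_j\in\mathcal{E}_{\{\omega\}}(\mathbb{T}^1)$ with $\operatorname{supp}\phi_j\subset J_j$ and $\int\phi_j\,dt=1$, and a cutoff $\chi\in\mathcal{E}_{\{\omega\}}(\mathbb{T}^1)$ equal to $1$ on a neighborhood of $\operatorname{supp}\phi_1$, vanishing outside $J_1$, and with $\operatorname{supp}\chi'$ as close to $\partial J_1$ as desired. Setting $\phi:=\phi_1-\phi_2$, we obtain $\int_{\mathbb{T}^1}\phi\,dt=0$; since $\operatorname{supp}\phi\subset J_1\cup J_2\subset\{B<r\}$, the quantity $M:=\max_{\operatorname{supp}\phi}B$ is strictly less than $r$, and by placing $\operatorname{supp}\chi'$ sufficiently close to $\partial J_1$ we ensure $m:=\min_{\operatorname{supp}\chi'}B>M$, producing a positive gap $m-M$.

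For $n\in\mathbb{Z}_+$ define
\begin{equation*}
f_n(t,x)=\phi(t)e^{nB(t)-inA(t)}e^{in\langle x,\xi_0\rangle},\qquad v_n(t,x)=\chi(t)e^{-nB(t)+inA(t)}e^{-in\langle x,\xi_0\rangle},
\end{equation*}
both elements of $\mathcal{E}_{\{\omega\}}(\mathbb{T}^{N+1})$. A direct computation---in which the derivatives of $B$ and $A$ cancel against the contribution of $\sum(a_j+ib_j)\partial_{x_j}$---yields $Lv_n=\chi'(t)e^{-nB+inA}e^{-in\langle x,\xi_0\rangle}$, whence $\operatorname{supp}{}^tLv_n\subset\{t:B(t)\geq m\}$. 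Pairing $f_n$ against $\mu_\eta$, orthogonality of Fourier characters in $x$ leaves only $\eta=-n\xi_0$; for that index the two $t$-exponentials cancel and the pairing reduces to a multiple of $\int\phi\,dt=0$, so $f_n\in(\ker{}^tL)^\circ$. Meanwhile,
\begin{equation*}
\left|\int_{\mathbb{T}^{N+1}}f_nv_n\right|=(2\pi)^N\left|\int_{\mathbb{T}^1}\phi\chi\,dt\right|=(2\pi)^N,
\end{equation*}
independently of $n$, since $\chi\equiv1$ on $\operatorname{supp}\phi_1$ and $\operatorname{supp}\phi_2\cap\operatorname{supp}\chi=\varnothing$.

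The remaining task is to estimate the Roumieu norms, and this is the main technical step. By repeated use of Fa\`a Di Bruno's formula, inequality \eqref{eqr2}, subadditivity of $\omega$, and the bound $\omega(n)\leq Dn$ coming from $\omega(t)=O(t)$, one produces constants $C_h,C_k>0$ such that, for all $n\geq1$,
\begin{equation*}
\|f_n\|_{1/h}\leq C_h\,e^{nM}\,e^{D\omega(n)/h},\qquad \|{}^tLv_n\|_{1/k}\leq C_k\,e^{-nm}\,e^{D\omega(n)/k}.
\end{equation*}
Taking $h$ and $k$ large enough that $D^2(1/h+1/k)<m-M$, the product tends to zero exponentially in $n$, contradicting the estimate of Lemma \ref{lr1}. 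The delicate point is precisely this Fa\`a Di Bruno estimate for $e^{\pm(nB-inA)}$ together with its cutoff: the factor of $n$ inside the phase must be absorbed into a term of the form $e^{D\omega(n)/h}$ rather than something exponentially worse, and it is here that the subadditivity of $\omega$ (and the factorial bound \eqref{eqr2}) plays the decisive role.
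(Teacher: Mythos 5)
Your argument is correct and follows essentially the same route as the paper: the a priori inequality of Lemma \ref{lr1} combined with test sequences built from cutoffs attached to the disconnected sublevel set and the weights $e^{\pm n(B-iA)}$, paired so that $\int f_nv_n$ is a nonzero constant while the product of norms decays exponentially. The only differences are cosmetic --- you place the mean-zero function inside the two components and the derivative of the cutoff near $\partial J_1$, whereas the paper makes the dual choice ($f_0$ supported in the high region separating the components, $v_0'$ inside them) and normalizes by $e^{\pm\epsilon n}$ so that each norm decays individually rather than only their product; the Fa\`a di Bruno estimate you defer is exactly the computation the paper carries out, with $n^{k}$ absorbed via $e^{\omega(n)/K}$ as in Lemma 2.6 of \cite{BFP}.
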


\begin{proof} The proof is by contradiction. We assume that $L$ is globally $\{\omega\}-$solvable and from Lemma \ref{lr1} we obtain, for each $h>0$ and $k>0,$ the existence of a constant $C_{h,k}>0$ satisfying
\begin{equation}\label{eqr4}
\left|\int_{\mathbb{T}^{N+1}}fv\right|\leq C_{h,k}\|f\|_{1/h}\|{}^tLv\|_{1/k},
\end{equation}
for every $f\in(\ker{}^tL)^\circ\cap\mathcal{E}_{\{\omega\},h}(\mathbb{T}^{N+1})$ and $v\in\mathcal{E}_{\{\omega\}}(\mathbb{T}^{N+1})$ such that ${}^tLv\in\mathcal{E}_{\{\omega\},k}(\mathbb{T}^{N+1}).$

Using a disconnected sublevel set $\Omega_r^{\xi'},$ we will construct sequences of functions, $\{f_n\}$ and $\{v_n\},$ which substituted in \eqref{eqr4} will produce a contradiction. 

If the sublevel $\Omega_r^{\xi'}$ is disconnected, then (as in \cite{Ho} and \cite{BDGK}), we can find a real number $r_0<r$ such that
$\Omega_{r_0}^{\xi'}$ has two connected components with disjoint closure. Since $\{\omega\}$ is non-quasianalytic, we recall that there exists cutoff functions with arbitrarily small support (see Proposition 2.4 and Lemma 3.3 in \cite{BMT}). We then pick $f_0,v_0\in \mathcal{E}_{\{\omega\}}(\mathbb{T}^1),$ satisfying the following conditions:
\[\int_{0}^{2\pi}f_0(t)dt=0,\ supp(f_0)\cap\Omega_{r_0}^{\xi'}=\emptyset,\]
\[supp(v_0')\subset\Omega_{r_0}^{\xi'},\] and
\[\int_{0}^{2\pi}f_0(t)v_0(t)dt>0.\]

Pick $\epsilon>0$ satisfying \[M\doteq\max_{t\in supp \,v_0' }\left\{\int_0^t\langle\xi',\beta(\tau)d\tau\rangle\right\}<\epsilon<r_0.\]

For $n\in\mathbb{N}$, we define the functions $f_n,v_n:\mathbb{T}^{N+1}\rightarrow\mathbb{C}$ by
\[
f_n(t,x)=\exp\left\{\epsilon n-n\int_{0}^{t}\langle\xi',\beta(\tau)\rangle d\tau\right\}\exp\left\{in\int_{0}^{t}\langle\xi',\alpha(\tau)\rangle d\tau\right\}f_0(t)e^{-in\langle\xi',x\rangle}, 
\] and
\[
v_n(t,x)=\exp\left\{-\epsilon n+n\int_{0}^{t}\langle\xi',\beta(\tau)\rangle d\tau\right\}\exp\left\{-in\int_{0}^{t}\langle\xi',\alpha(\tau)\rangle d\tau\right\}v_0(t)e^{in\langle\xi',x\rangle},
\] where we recall that $\langle\xi',\beta(\tau)\rangle=\sum_{j=1}^Nb_j(\tau)\xi'_j,$
$\langle\xi',\alpha(\tau)\rangle=\sum_{j=1}^Na_j(\tau)\xi'_j.$ 

Since $a_{j0}\in\mathbb{Z}$ and $b_{j0}=0,$ for all $j,$ it follows that $f_n$ and $v_n$ belong to $\mathcal{E}_{\{\omega\}}(\mathbb{T}^{N+1}).$ 

We claim that there exist positive constants $C$ and $h,$ which does not depend on $n,$ such that  $(f_n)\subset \mathcal{E}_{\{\omega\},h}(\mathbb{T}^{N+1})$ and \begin{equation}\label{eqr1}\|f_n\|_{1/h}\leq Ce^{-\frac{n(r_0-\epsilon)}{4}}.\end{equation}

In fact, pick $h_1>0$ such that $f_0\in\mathcal{E}_{\{\omega\},h_1}(\mathbb{T}^{1}).$ Then there exists $C_1>0$ such that 
\[|\partial_t^k\partial_x^{\gamma}f_n(t,x)|\leq |\xi'|^{|\gamma|} n^{|\gamma|}\sum_{m=0}^{k}\binom{k}{m}\left|\partial_t^m\left(e^{\epsilon n-n\int_{0}^{t}\langle\xi',\beta(\tau)-i\alpha(\tau)\rangle d\tau}\right)\right|C_1e^{h_1^{-1}\varphi^\ast(h_1(k-m))}.\]

Setting \[E_n(t)=\exp\left\{\epsilon n-n\int_{0}^{t}\langle\xi' ,\beta(\tau)-i\alpha(\tau)\rangle d\tau\right\},\] on $\mbox{supp}f_0$ we have $|E_n(t)|\leq e^{-n(r_0-\epsilon)},$ and for $m\geq1$ Fa\`a di Bruno's formula gives 
\[|\partial_t^mE_n(t)|\leq\sum_{k_1+2k_2+\cdots+mk_m=m}\frac{m!}{k_1!\cdots k_m!}e^{-(r_0-\epsilon)n}\prod_{\ell=1}^m\left|\frac{n\langle\xi',\partial_t^{\ell-1}[i\alpha(t)-\beta(t)]\rangle}{\ell!}\right|^{k_\ell}.\]

By using that $a_j,b_j\in\mathcal{E}_{\{w\}}(\mathbb{T}^1),$ we obtain positive constants $h_2$ and $C_2$ such that 
\[|\partial_t^{m}(ia_j(t)-b_j(t))|\leq C_2\exp\{h_2^{-1}\varphi^\ast(mh_2)\}\leq C_2,\] for all $t\in\mathbb{T}^1,$ $m\in\mathbb{Z}_+$ and $j=1,\ldots,N.$

Hence, \[|\partial_t^mE_n(t)|\leq\sum_{k_1+2k_2+\cdots+mk_m=m}\frac{m!}{k_1!\cdots k_m!}n^{k}e^{-(r_0-\epsilon)n}(|\xi'|NC_2)^k\prod_{\ell=1}^m\frac{e^{k_\ell h_2^{-1}\varphi^\ast((\ell-1)h_2)}}{\ell!^{k_\ell}},\] where $k=k_1+\cdots+k_m.$

Since $\varphi^\ast$ is increasing, we have $\varphi^\ast((\ell-1)h_2)\leq \varphi^\ast(\ell h_2.)$ In addition, from \eqref{eqr2} we obtain \[\prod_{\ell=1}^{m}\frac{e^{k_\ell h_2^{-1}\varphi^\ast((\ell-1)h_2)}}{\ell!^{k_\ell}}\leq \prod_{\ell=1}^{m}\frac{e^{h_2^{-1}\varphi^\ast(k_\ell \ell h_2)}}{(k_\ell\ell)!}\leq \frac{e^{h_2^{-1}\varphi^\ast(mh_2)}}{m!}.\]

It follows that 
\[|\partial_t^mE_n(t)|\leq\sum_{k_1+2k_2+\cdots+mk_m=m}\frac{(|\xi'|NC_2)^kn^{k}}{k_1!\cdots k_m!}e^{-(r_0-\epsilon)n}e^{h_2^{-1}\varphi^\ast(mh_2)}.\]

Since $t=2k/(r_0-\epsilon)$ is a maximum point of $t\mapsto t^ke^{-(r_0-\epsilon)t/2}$ on $(0,\infty),$ we pick $C_\epsilon=2/(r_0-\epsilon)$ to obtain 
\[n^ke^{\frac{-(r_0-\epsilon)n}{2}}\leq C_\epsilon^kk!.\]

In addition, applying Lemma 2.2 in \cite{BDG3} we obtain $R=|\xi'|NC_2C_\epsilon$ satisfying
\[|\partial_t^mE_n(t)|\leq e^{\frac{-(r_0-\epsilon)n}{2}}e^{h_2^{-1}\varphi^\ast(mh_2)}R(R+1)^{m-1}.\] 

By using the above estimate, we obtain 
\[|\partial_t^k\partial_x^{\gamma}f(t,x)|\leq C_1|\xi'|^{|\gamma|}(R+1)^kn^{|\gamma|}e^{\frac{-n(r_0-\epsilon)}{2}}\sum_{m=0}^{k}\binom{k}{m}e^{h_2^{-1}\varphi^\ast(mh_2)}e^{h_1^{-1}\varphi^\ast(h_1(k-m))}.\]

The assumption $\omega(t)=O(t),$ as $t$ tends to $\infty,$ implies that there exists $M>0$ such that $\omega(n)\leq Mn.$ Picking $h_3>4M/(r_0-\epsilon),$ it follows that
\begin{align*}n^{|\gamma|}e^{\frac{-n(r_0-\epsilon)}{4}}e^{-h_3^{-1}\varphi^\ast(h_3|\gamma|)}=&\exp\left\{|\gamma|\log(n)-h_3^{-1}\varphi^\ast(h_3|\gamma|)-\frac{n(r_0-\epsilon)}{4}\right\}\\
\leq&\exp\left\{h_3^{-1}\omega(n)-\frac{n(r_0-\epsilon)}{4}\right\}\\
\leq&\exp\left\{h_3^{-1}Mn-\frac{n(r_0-\epsilon)}{4}\right\}\leq1.\end{align*}

It follows that
\begin{multline*}|\partial_t^k\partial_x^{\gamma}f(t,x)|e^{-h_3^{-1}\varphi^\ast(h_3|\gamma|)}\leq \\
C_1|\xi'|^{|\gamma|}(R+1)^ke^{\frac{-n(r_0-\epsilon)}{4}}\sum_{m=0}^{k}\binom{k}{m}e^{h_2^{-1}\varphi^\ast(mh_2)}e^{h_1^{-1}\varphi^\ast(h_1(k-m))}.\end{multline*}

By \eqref{eqr2} and since $\varphi^\ast(t)/t$ is increasing, for $h_4=\max\{h_1,h_2,h_3\}$ we obtain 
\[\frac{k!}{m!(k-m)!}e^{h_2^{-1}\varphi^\ast(mh_2)}e^{h_1^{-1}\varphi^\ast(h_1(k-m))}\leq k!\frac{e^{h_4^{-1}\varphi^\ast(mh_4)}}{m!}\frac{e^{h_4^{-1}\varphi^\ast(h_4(k-m))}}{(k-m)!}\leq e^{h_4^{-1}\varphi^\ast(kh_4)},\] and, consequently,
\[|\partial_t^k\partial_x^{\gamma}f(t,x)|e^{-h_4^{-1}\varphi^\ast(h_4|\gamma|)}\leq C_1|\xi'|^{|\gamma|}(R+1)^ke^{\frac{-n(r_0-\epsilon)}{4}}(k+1)e^{h_4^{-1}\varphi^\ast(kh_4)}.\]

By Lemma 5.9 in \cite{RS}, there exist positive constants $C_3$ and $h_5>h_4$ such that 
\[(R+1)^k(k+1)e^{h_4^{-1}\varphi^\ast(kh_4)}\leq (R+1)^k2^ke^{h_4^{-1}\varphi^\ast(kh_4)}\leq C_3e^{h_5^{-1}\varphi^\ast(kh_5)},\] \[|\xi'|^{|\gamma|}e^{h_4^{-1}\varphi^\ast(h_4|\gamma|)}\leq C_3e^{h_5^{-1}\varphi^\ast(|\gamma|h_5)},\] and, consequently,
\[|\partial_t^k\partial_x^{\gamma}f(t,x)|e^{-h_5^{-1}\varphi^\ast(h_5|\gamma|)}e^{-h_5^{-1}\varphi^\ast(h_5k)}\leq C_1C_3e^{\frac{-n(r_0-\epsilon)}{4}}.\]

We now use superadditivity of $\varphi^\ast$ in order to obtain \eqref{eqr1}. 

A similar inspection shows that there exist positive constants $K$ and $\tilde{h},$ which does not depend on $n,$ such that  $({}^tLv_n)\subset \mathcal{E}_{\{\omega\},\tilde{h}}(\mathbb{T}^{N+1})$ and \begin{equation}\label{eqr3}\|{}^tLv_n\|_{1/\tilde{h}}\leq Ke^{-\frac{n(r_0-\epsilon)}{4}}.\end{equation}

We now show that $f_n\in(\ker{}^tL)^{\circ}.$ Indeed, if $\mu\in\ker{}^tL\subset\mathcal{E}_{\{\omega\}}'(\mathbb{T}^{N+1}),$ partial Fourier series in the variables $(x_1,\ldots,x_N)$ gives \[\mu=\sum_{\xi\in\mathbb{Z}^N}c_{\xi}\exp\left\{\int_{0}^{t}\langle\xi,\beta(\tau)-i\alpha(\tau)\rangle d\tau\right\}e^{i\langle\xi,x\rangle}, \,\,\textrm{with } c_{\xi}\in\mathbb{C}.\] 

Finally, applying \eqref{eqr4}, \eqref{eqr1} and \eqref{eqr3}, we obtain a positive constant $C_{h,\tilde{h}}$ such that 
\[0<(2\pi)^N\int_{0}^{2\pi}f_0(t)v_0(t)dt=\left|\int_{\mathbb{T}^{N+1}}f_nv_n\right|\leq C_{h,\tilde{h}}e^{-(r_0-\epsilon)n/2},\] for all $n\in\mathbb{Z}_+.$
Since $\displaystyle\lim_{n\rightarrow\infty} e^{-(r_0-\epsilon)n/2}=0,$ we obtain a contradiction.

Therefore, $L$ is not globally $\{\omega\}-$solvable.
\end{proof}

\begin{remark}As in \cite{BDG1,BDG3}, it is possible to find a conjugation $T$ such that the real part of the coefficients of the vector field $T\circ L\circ T^{-1}$  are constants. In addition, if the mean of the real part of one coefficient is an integer number, then we may assume that this real part is zero. This procedure simplify the proofs, but it is not crucial. Furthermore, in general this procedure may not be used for higher-order operators, neither for vector fields on others manifolds. With this in mind, we decided do not use this reduction throughout this article.
\end{remark}

Next result yields that $L$ is not globally $\{\omega\}-$solvable under the following situation: a certain $b_k$ does not vanish identically, all means $b_{j0}$ are zero, and at least one mean $a_{\ell0}$ is not an integer number.

\begin{proposition}\label{pror1} Let $L$ be given by \eqref{L_introduction}. Suppose that $b_{j0}=0$, for all $j,$ and that at least one $b_j$ does not vanish identically. If $(a_{10},\ldots,a_{N0})\not\in\mathbb{Z}^N,$ then $L$ is not globally $\{\omega\}-$solvable.
\end{proposition}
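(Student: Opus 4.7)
My plan is to argue by contradiction, assuming $L$ is globally $\{\omega\}-$solvable, and to exhibit an explicit $f\in(\ker{}^tL)^\circ\cap\mathcal{E}_{\{\omega\}}(\mathbb{T}^{N+1})$ whose (uniquely determined, on the relevant Fourier modes) formal solution violates the decay required for $u\in\mathcal{E}_{\{\omega\}}(\mathbb{T}^{N+1})$. First I pick a working direction: from $(a_{10},\ldots,a_{N0})\notin\mathbb{Z}^N$ fix $j_0$ with $a_{j_00}\notin\mathbb{Z}$ and some $k$ with $b_k\not\equiv0$. A short case analysis produces $\xi_0\in\mathbb{Z}^N$ with $\eta\doteq\langle\xi_0,\alpha_0\rangle\notin\mathbb{Z}$ and $\langle\xi_0,\beta(\cdot)\rangle\not\equiv0$: take $\xi_0=e_{j_0}$ if $b_{j_0}\not\equiv0$; otherwise take $\xi_0=e_{j_0}+me_k$ with $m\in\{1,2\}$ chosen so that $a_{j_00}+ma_{k0}\notin\mathbb{Z}$, which is always possible (if $a_{j_00}+a_{k0}$ and $a_{j_00}+2a_{k0}$ were both integers, their difference $a_{k0}$ would be too, forcing $a_{j_00}\in\mathbb{Z}$). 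Set $A_0(t)=\int_0^t\langle\xi_0,\alpha(\tau)\rangle d\tau$ and $B_0(t)=\int_0^t\langle\xi_0,\beta(\tau)\rangle d\tau$; because $b_{j0}=0$ for every $j$, the function $B_0$ is $2\pi$-periodic, mean zero, and non-constant, so $M\doteq\max_t B_0(t)>0$.

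Next, let $\Lambda\subset\mathbb{Z}_+$ be an infinite subset on which $\mathrm{dist}(n\eta,\mathbb{Z})\geq\kappa$ for some $\kappa>0$: if $\eta=p/q$ is rational ($q\geq2$) take $\Lambda=\{n:q\nmid n\}$ and $\kappa=1/q$; if $\eta$ is irrational, remove from $\mathbb{Z}_+$ the sparse subsequence of near-integer approximants. For $\mu>0$ to be calibrated, define
\[f(t,x)=\sum_{n\in\Lambda}e^{-\mu\omega(n)}e^{inA_0(t)}e^{-in\langle\xi_0,x\rangle}.\]
Membership $f\in\mathcal{E}_{\{\omega\}}(\mathbb{T}^{N+1})$ follows from the partial Fourier characterization \eqref{eqr10}: a Fa\`a di Bruno estimate for $\partial_t^\alpha e^{inA_0(t)}$ of the shape worked out in the proof of Proposition \ref{pror2} produces a bound of type $C(Rn+1)^\alpha e^{h_2^{-1}\varphi^\ast(\alpha h_2)}$, and Young's inequality $m\log(Rn+1)\leq\lambda\varphi^\ast(m/\lambda)+\lambda\omega(Rn+1)$ together with the weight property $\log t=o(\omega(t))$ absorbs the polynomial-in-$n$ growth once $\mu$ is taken large enough. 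To check $f\in(\ker{}^tL)^\circ$, recall that every $\nu\in\ker{}^tL$ has partial Fourier support on $\{\xi:\langle\xi,\alpha_0\rangle\in\mathbb{Z}\}$ (the companion condition $\langle\xi,\beta_0\rangle=0$ is automatic here); the modes of $f$, namely $\{-n\xi_0:n\in\Lambda\}$, all satisfy $-n\eta\notin\mathbb{Z}$, so $\langle\nu,f\rangle$ vanishes mode-by-mode.

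Now suppose $u\in\mathcal{E}_{\{\omega\}}(\mathbb{T}^{N+1})$ satisfies $Lu=f$. For each $n\in\Lambda$, the coefficient $\hat u(t,-n\xi_0)$ is the unique $2\pi$-periodic solution (uniqueness by $n\eta\notin\mathbb{Z}$) of the ODE $\partial_t\hat u-in\langle\xi_0,\alpha+i\beta\rangle\hat u=e^{-\mu\omega(n)}e^{inA_0(t)}$. The integrating factor $e^{-inA_0(t)+nB_0(t)}$ is designed precisely to cancel the oscillation in the source---this is the whole reason for the $e^{inA_0(t)}$ in $f$---so the source becomes $e^{-\mu\omega(n)}e^{nB_0(t)}$ and one obtains
\[\hat u(0,-n\xi_0)=\frac{e^{-\mu\omega(n)}}{e^{-2\pi in\eta}-1}\int_0^{2\pi}e^{nB_0(s)}\,ds.\]
The denominator satisfies $|e^{-2\pi in\eta}-1|\geq 2\sin(\pi\kappa)>0$ for $n\in\Lambda$; for any fixed $\delta\in(0,M)$ the set $\{s:B_0(s)>M-\delta\}$ has positive Lebesgue measure $\rho_\delta$, so $\int_0^{2\pi}e^{nB_0(s)}\,ds\geq\rho_\delta e^{n(M-\delta)}$. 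Therefore $|\hat u(0,-n\xi_0)|\geq c\,e^{n(M-\delta)-\mu\omega(n)}$. Under the non-quasianalyticity hypothesis one has $\omega(n)/n\to 0$, so the right-hand side diverges exponentially, contradicting the bound $|\hat u(0,-n\xi_0)|\leq C_1 e^{-\epsilon\omega(n|\xi_0|)}$ forced by $u\in\mathcal{E}_{\{\omega\}}(\mathbb{T}^{N+1})$.

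The main technical obstacle is the verification $f\in\mathcal{E}_{\{\omega\}}(\mathbb{T}^{N+1})$, since each summand carries the factor $e^{inA_0(t)}$ whose $t$-derivatives pick up powers of $n$ that must be absorbed uniformly. This is morally the same Fa\`a di Bruno calculation carried out in the proof of Proposition \ref{pror2}, adapted to bound $\partial_t^\alpha e^{inA_0(t)}$ rather than $\partial_t^\alpha E_n(t)$; apart from that, the argument reduces to one ODE per Fourier mode plus an elementary Laplace-type lower bound near the maximum of $B_0$.
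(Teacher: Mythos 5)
Your construction of $f$ breaks down at the very first step: the summand $e^{inA_0(t)}e^{-in\langle\xi_0,x\rangle}$ with $A_0(t)=\int_0^t\langle\xi_0,\alpha(\tau)\rangle\,d\tau$ is \emph{not} $2\pi$-periodic in $t$, since $e^{inA_0(t+2\pi)}=e^{2\pi in\eta}e^{inA_0(t)}$ and you have chosen $\Lambda$ precisely so that $n\eta\notin\mathbb{Z}$. So your $f$ is not a function on $\mathbb{T}^{N+1}$ at all, and the ODE you solve has a non-periodic right-hand side. This is not a cosmetic slip but the heart of the difficulty: the functions $e^{i\int_0^t\langle\xi,\alpha+i\beta\rangle}$ that trivialize the transport equation on the mode $\xi$ are periodic exactly when $\langle\xi,\alpha_0\rangle\in\mathbb{Z}$ and $\langle\xi,\beta_0\rangle=0$, i.e.\ exactly on the resonant modes you must avoid in order to have $f\in(\ker{}^tL)^\circ$ and a unique periodic solution. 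If you repair periodicity in the obvious way --- replace $A_0$ by $A_0(t)-\eta t$, or insert a factor $e^{i\tau_nt}$ with $\tau_n\in\mathbb{Z}$ --- the cancellation you rely on is lost and the solution formula becomes $\hat u(0)(e^{-2\pi in\eta}-1)=e^{-\mu\omega(n)}\int_0^{2\pi}e^{i(\tau_n-n\eta)s}e^{nB_0(s)}ds$, a genuinely oscillatory Laplace integral. Its lower bound is no longer ``elementary'': a Gaussian model $\int e^{-nc\sigma^2-in\eta\sigma}d\sigma\sim e^{-n\eta^2/(4c)}\sqrt{\pi/(nc)}$ shows the oscillation can cancel the exponential gain $e^{nM}$ entirely. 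This is precisely why the paper builds $\hat f(\cdot,\xi(n))$ from honestly periodic data ($\cos$, $\sin$, and the periodized primitive $A$) and then must invoke H\"ormander's stationary phase theorem (quasianalytic case) or a cutoff localization plus Laplace's method (non-quasianalytic case) to extract the lower bound $|\hat u|\gtrsim k_n^{-1/2}$, which contradicts $\log t=o(\omega(t))$.

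Two further points would need attention even after fixing the above. First, $\max_tB_0(t)>0$ does not follow from what you wrote: $B_0$ is periodic because $B_0'$ has mean zero, but $B_0$ itself need not have mean zero (e.g.\ $B_0(t)=\cos t-1$ has $\max B_0=0$); you must allow replacing $\xi_0$ by $-\xi_0$ to ensure a positive maximum. Second, the proposition is stated for arbitrary weight functions and is invoked in the proof of Proposition \ref{pror4}, hence in Theorem \ref{mtr1}, which covers quasianalytic $\omega$; your final divergence uses $\omega(n)/n\to0$, which fails for instance for $\omega(t)=t$, so as written your argument covers only the non-quasianalytic case. (With $\mu$ chosen \emph{small} rather than large --- which the Roumieu setting permits, since the Young-inequality absorption $n^m\le e^{\lambda\varphi^\ast(m/\lambda)}e^{\lambda\omega(n)}$ works for arbitrarily small $\lambda$ --- the bound $\omega(n)=O(n)$ would suffice, but that is not the argument you gave, and in any case it does not rescue the periodicity problem.)
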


\begin{proof}The proof is inspired in techniques appearing in \cite{AD,B,BDG1,BDG3}. 

Without loos of generality, we may assume that one of the following two situations occurs:
\begin{itemize}
\item $b_1^{-1}(0)\neq\mathbb{T}^1,$ $b_{10}=0,$ $a_{10}\in\mathbb{Z}$ and $a_{20}\not\in\mathbb{Z}.$ 
\item $b_1^{-1}(0)\neq\mathbb{T}^1,$ $b_{10}=0$ and $a_{10}\not\in\mathbb{Z}.$
\end{itemize}

In both cases, we will construct a function \[f(t,x)=\sum_{n=1}^{\infty}\hat{f}(t,\xi(n))e^{i\langle\xi(n),x\rangle}\] satisfying $f\in(\ker{}^tL)^\circ\setminus L\mathcal{E}_{\{\omega\}}(\mathbb{T}^{N+1}).$ 

In each of the two above situations the proof is quite similar. We will be concentrate in the first situation and in the sequel we comment which changes are sufficient to prove the second situation. 

We also split the proof into the quasianalytic and non-quasianalytic cases:

\medskip

\textit{Case 1:} $\{\omega\}$ \textit{is a quasianalytic weight function}.

Pick $\xi(n)=k_n(1,1,0,\ldots,0)\in\mathbb{Z}^N,$ with $(k_n)\subset\mathbb{N}$ an increasing sequence satisfying $k_n\geq n$ and $a_{20}k_n\not\in\mathbb{Z},$ for all $n\in\mathbb{N}.$ 

The partial Fourier coefficients of $f$ will be of the form \[\hat{f}(t,\xi(n))=\tilde{E}_ne^{-k_n\psi(t)},\] in which \[\tilde{E}_n=1-e^{-2\pi i k_n(a_{10}+a_{20})}\] and $\psi\in\mathcal{E}_{\{\omega\}}(\mathbb{T}^{1})$ will be constructed in the sequel.

Since $b_1$ changes sign, we have $\emptyset\neq b_1^{-1}(0)\neq\mathbb{T}^1.$ Since $\omega$ is quasianalytic, it follows that the zeros of $b_1$ are isolated. By performing a translation in the variable $t,$ we may assume that $b_1(0)=0,$ $b_1<0$ on a small interval $[-\epsilon,0),$ and $b_1>0$ on $(0,\epsilon].$

Set \[A(t)=\int_{0}^{t}(a_1(s)+a_2(s))ds-t(a_{10}+a_{20})\] and \[B(t)=\int_{0}^{t}b_1(s)ds.\] It follows that $A$ and $B$ belong to $\mathcal{E}_{\{\omega\}}(\mathbb{T}^{1}),$ with $B(0)=0$ and $B(2\pi)<B(2\pi-\epsilon).$

Pick $t_0\in(0,2\pi)$ such that \[M\doteq B(t_0)=\max_{t\in[0,2\pi]}B(t)>0\] and define $\psi:\mathbb{T}^1\rightarrow\mathbb{C}$ by
\[\psi(t)=M+K(1-\cos(t))+i[(a_{1}(0)+a_{2}(0))\sin(t)-A(t_0)],\] in which $K>0$ is a constant that will be adjusted later.

Notice that $\psi$ is real-analytic. It follows that each $\hat{f}(\cdot,\xi(n))$ belongs to $\mathcal{E}_{\omega}(\mathbb{T}^1).$ 

As in \cite{BDG3}, we use the Fa\`a di Bruno formula to show that $f\in\mathcal{E}_{\{\omega\}}(\mathbb{T}^N).$

Notice that \begin{align*}|\partial_t^\ell\hat{f}(t,\xi(n))|\leq &\sum_{m_1+2m_2+\dots+\ell m_\ell=\ell}\frac{\ell !}{m_1!m_2!\cdots m_\ell!}e^{-k_n\Re\psi(t)}\prod_{j=1}^{\ell}k_n^{m_j}\left(\frac{|\psi^{(j)}(t)|}{j!}\right)^{m_j}\\
\leq&\sum_{m_1+2m_2+\dots+\ell m_\ell=\ell}\frac{\ell !}{m_1!m_2!\cdots m_\ell!}e^{-k_nM}k_n^{m}\prod_{j=1}^{\ell}\left(\frac{|\psi^{(j)}(t)|}{j!}\right)^{m_j},\end{align*} in which $m=m_1+\cdots+m_\ell.$

Picking $C_1>0$ and $h_1>0$ such that $|\psi^{(j)}(t)|\leq C_1e^{h_1^{-1}\varphi^\ast(jh_1)},$ for all $t\in[0,2\pi]$ and $j\in\mathbb{Z}_{+},$ we obtain
\[\prod_{j=1}^{\ell}\left(\frac{|\psi^{(j)}|(t)}{j!}\right)^{m_j}\leq C_1^{m}\prod_{j=1}^{\ell}\frac{e^{m_jh_1^{-1}\varphi^\ast(jh_1)}}{(j!)^{m_j}}.\]

As in the proof of Proposition \ref{pror2}, by applying \eqref{eqr2} it follows that 
\[\prod_{j=1}^{\ell}\frac{e^{m_jh_1^{-1}\varphi^\ast(jh_1)}}{(j!)^{m_j}}\leq\frac{e^{h_1^{-1}\varphi^\ast(\ell h_1)}}{(\ell!)}.\]

In addition, as before we have $e^{-k_nM/2}k_n^{m}\leq (2/M)^{m}m!,$ for all $n.$ 

Summarizing, we obtain 
\[|\partial_t^\ell\hat{f}(t,\xi(n))|\leq e^{-k_nM/2}e^{h_1^{-1}\varphi^\ast(\ell h_1)}\sum_{m_1+2m_2+\dots+\ell m_\ell=\ell}\frac{(2C_1/M)^mm!}{m_1!m_2!\cdots m_\ell!}.\]

By Lemma 2.2 in \cite{BDG3}, we obtain

\[|\partial_t^\ell\hat{f}(t,\xi(n))|\leq e^{-k_nM/2}(2C_1/M)(2C_1/M+1)^\ell e^{h_1^{-1}\varphi^\ast(\ell h_1)}.\]

It follows from Lemma 5.9 in \cite{RS} the existence of positive constants $C_2$ and $h_2>h_1$ satisfying \[|\partial_t^\ell\hat{f}(t,\xi(n))|\leq C_2e^{-k_nM/2}e^{h_2^{-1}\varphi^\ast(\ell h_2)}.\] 

Since $\omega(t)=O(t),$ as $t\rightarrow\infty,$ we may obtain $\gamma>0$ small enough such that \[-\frac{M}{2}k_n+\gamma\omega(|\xi(n)|)=-\frac{M}{2}k_n+\gamma\omega(2k_n)\leq 0,\] for all $n.$  

Hence, \[|\partial_t^\ell\hat{f}(t,\xi(n))|\leq C_2e^{h_2^{-1}\varphi^\ast(\ell h_2)}e^{-\gamma\omega(|\xi(n)|)}.\] 

As presented in Section \ref{sec3}, the above control on the partial Fourier coefficients implies that $f\in\mathcal{E}_{\{\omega\}}(\mathbb{T}^N).$

Notice also that $f\in(\ker{}^tL)^\circ,$ since for any $\mu\in\mathcal{E}_{\{\omega\}}'(\mathbb{T}^N)$ such that ${}^tL\mu=0,$ we have $\hat{\mu}(\cdot,-\xi(n))=0.$  

The next step in the proof is to show that $f\not\in L\mathcal{E}_{\{\omega\}}(\mathbb{T}^N).$

If there exists $u\in\mathcal{E}_{\{\omega\}}(\mathbb{T}^N)$ such that $Lu=f,$ then the partial Fourier series gives 
\[\partial_t\hat{u}(t,\xi(n))+k_n[-b_1(t)+i(a_1(t)+a_2(t))]\hat{u}(t,\xi(n))=\hat{f}(t,\xi(n))\] and, consequently, 
\begin{align*}\hat{u}(t,\xi(n))=&\tilde{E}_n^{-1}\int_{0}^{2\pi}\hat{f}(t-s,\xi(n))e^{k_n\int_{t-s}^{t}b_1(\tau)d\tau}e^{-ik_n\int_{t-s}^{t}(a_1+a_2)(\tau)d\tau}ds\\
=&\int_{0}^{2\pi}e^{-k_n\psi(t-s)}e^{k_n(B(t)-B(t-s))}e^{-ik_n\int_{t-s}
^{t}(a_1+a_2)(\tau)d\tau}ds.
\end{align*} 

In addition, by setting 
\begin{multline*}H(t,s)=B(t)-B(t-s)-M-K(1-\cos(t-s))+\\i[A(t-s)-A(t)-s(a_{10}+a_{20})
+A(t_0)-\sin(t-s)(a_1(0)+a_2(0))], \ \ \ t,s\in[0,2\pi],\end{multline*} we obtain
\[\hat{u}(t,\xi(n))=\int_{0}^{2\pi}e^{k_nH(t,s)}ds.\]

In the sequel, we will control the decaying of the sequence $\hat{u}(t_0,\xi(n)).$ 

We have \begin{multline*}H(t_0,s)=-B(t_0-s)-K(1-\cos(t_0-s))+\\
i[A(t_0-s)-s(a_{10}+a_{20})
-\sin(t_0-s)(a_1(0)+a_2(0))]\end{multline*} and
\[\hat{u}(t_0,\xi(n))=\int_{t_0-2\pi}^{t_0}e^{k_nH(t_0,t_0-\sigma)}d\sigma.\]

It follows that \begin{equation}\label{eqr5}|\hat{u}(t_0,\xi(n))|=\left|\int_{t_0-2\pi}^{t_0}e^{k_n[H(t_0,t_0-\sigma)+it_0(a_{10}+a_{20})]}d\sigma\right|.\end{equation}

Note that \[\Re H(t_0,t_0-\sigma)=-B(\sigma)-K(1-\cos(\sigma))\] and 
\[\Im H(t_0,t_0-\sigma)=A(\sigma)+(\sigma-t_0)(a_{10}+a_{20})
-\sin(\sigma)(a_1(0)+a_2(0)).\]

In order to have $\Re H(t_0,t_0-\sigma)<0,$ we use the conditions $0<t_0<2\pi$ and $B(0)=0$ to choose $K$ satisfying \[K> \sup_{\sigma\in[t_0-2\pi,t_0]}\frac{-B(\sigma)}{1-\cos(\sigma)}.\] 

The function \begin{align*}\phi(\sigma)\doteq& -iH(t_0,t_0-\sigma)+t_0(a_{10}+a_{20})\\
=&A(\sigma)+\sigma(a_{10}+a_{20})-(a_{1}(0)+a_2(0))\sin(\sigma)+i[B(\sigma)+K(1-\cos(\sigma))]\end{align*} satisfies the following conditions:
\[\phi(0)=0; \ \phi'(\sigma)=a_1(\sigma)+a_2(\sigma)-(a_1(0)+a_2(0))\cos(\sigma)+i(b_1(\sigma)+K\sin(\sigma));\] \[\phi'(0)=0; \ \textrm{and} \ \phi''(0)=a_1'(0)+a_2'(0)+i(b_1'(0)+K).\]

Increasing $K,$ we obtain the following three conditions: $\phi''(0)\neq0,$ $\phi'(\sigma)\neq0$ on a small set $(-\epsilon,\epsilon)\setminus\{0\},$ and $\Im \phi(\sigma)>0$ on $[t_0-2\pi,-\epsilon/2]\cup[\epsilon/2,t_0].$ Moreover, since $\sigma^2/|\phi'(\sigma)|^2$ tends to $[(a_1'(0)+a_2'(0))^2+(b_1'(0)+K)^2]^{-1},$ as $\sigma\rightarrow0,$ it follows that $|\sigma|/|\phi'(\sigma)|$ is bounded on a small interval $(-\epsilon,\epsilon).$ 

It follows that we may apply Theorem 7.7.5 in \cite{H1}, which yields the existence of a positive constant $C_1$ satisfying 
\[\left|\int_{|\sigma|<\epsilon}\chi(\sigma) e^{ik_n\phi(\sigma)}d\sigma-e^{ik_n\phi(0)}\left(\frac{k_n\phi''(0)}{2\pi i}\right)^{-1/2}\chi(0)\right|\leq \frac{C_1}{k_n}\sum_{|\alpha|\leq2}\sup_{\sigma\in[-\epsilon,\epsilon]}|D^\alpha\chi(\sigma)|,\] in which $\chi\in\mathcal{C}^\infty$ satisfies $\chi=1$ on $[-\epsilon/2,\epsilon/2]$ and $\textrm{supp }\chi\subset[-\epsilon,\epsilon].$  

Hence, there exists a complex number $C_2\neq0$ and a positive constant $C_3$ such that \begin{equation}\label{eqr6}\left|\int_{|\sigma|<\epsilon}\chi(\sigma) e^{ik_n\phi(\sigma)}d\sigma-\frac{C_2}{\sqrt{k_n}}\right|\leq \frac{C_3}{k_n}.\end{equation}

On the other hand \[\left|\int_{\sigma\in[t_0-2\pi,-\epsilon/2]\cup[\epsilon/2,t_0]}e^{ik_n\phi(\sigma)}d\sigma\right|\leq 2\pi e^{-k_n\mu},\] with \[0<\mu=\min\{B(\sigma)+K(1-\cos(\sigma)); \ \sigma\in[t_0-2\pi,-\epsilon/2]\cup[\epsilon/2,t_0]\}.\]

By the above estimate we obtain
\[\left|\int_{|\sigma|<\epsilon}\chi(\sigma) e^{ik_n\phi(\sigma)}d\sigma-\frac{C_2}{\sqrt{k_n}}\right|\geq\frac{|C_2|}{\sqrt{k_n}}-\left|\int_{|\sigma|<\epsilon/2}\chi(\sigma) e^{ik_n\phi(\sigma)}d\sigma\right|-2\pi e^{-k_n\mu}.\]

From \eqref{eqr5} and by the above estimate we obtain \begin{align*}|\hat{u}(t_0,\xi(n))|=&\left|\int_{t_0-2\pi}^{t_0} e^{ik_n\phi(\sigma)}d\sigma\right|\\
\geq&\left|\int_{|\sigma|<\epsilon/2} e^{ik_n\phi(\sigma)}d\sigma\right|-\left|\int_{\sigma\in[t_0-2\pi,-\epsilon/2]\cup[\epsilon/2,t_0]} e^{ik_n\phi(\sigma)}d\sigma\right|\\
\geq&\left|\int_{|\sigma|<\epsilon/2} e^{ik_n\phi(\sigma)}d\sigma\right|-2\pi e^{-k_n\mu}\\
\geq&\frac{|C_2|}{\sqrt{k_n}}-\left|\int_{|\sigma|<\epsilon}\chi(\sigma) e^{ik_n\phi(\sigma)}d\sigma-\frac{C_2}{\sqrt{k_n}}\right|-4\pi e^{-k_n\mu}.
\end{align*}

By \eqref{eqr6} we obtain
\[|\hat{u}(t_0,\xi(n))|\geq \frac{|C_2|}{\sqrt{k_n}}-\frac{C_3}{k_n}-4\pi e^{-k_n\mu}.\]

Hence, for $n$ sufficiently large we obtain \[|\hat{u}(t_0,\xi(n))|\geq \frac{|C_2|}{2\sqrt{k_n}}.\]

Since $u\in\mathcal{E}_{\{\omega\}}(\mathbb{T}^{N+1}),$ there exist positive constants $C_4$ and $\gamma$ such that
\[0<\frac{|C_2|}{2\sqrt{k_n}}\leq C_4 e^{-\gamma\omega(|\xi(n)|)}=C_4 e^{-\gamma\omega(2k_n)}.\]

Finally, since $\log(t)=o(\omega(t)),$ for $n$ large enough we obtain
\[0<\frac{|C_2|}{C_4}\leq\frac{1}{\sqrt{k_n}}.\] 

The above estimate produces a contradiction, since $k_n\rightarrow\infty$ as $n\rightarrow\infty.$ 

Therefore, cannot exist $u$ such that $Lu=f.$

This completes the proof in the first situation.

In the second situation, we pick $\xi(n)=k_n(1,0,\ldots,0),$ in which $k_na_{10}\not\in\mathbb{Z},$ for all $n.$ 

We then consider \[\hat{f}(t,\xi(n))=\tilde{\tilde{E}}_ne^{-k_n\tilde{\psi}(t)},\] in which \[\tilde{\tilde{E}}_n=1-e^{-2\pi ik_na_{10}}\] and \[\tilde{\psi}(t)=M+K(1-\cos(t))+i[a_1(0)\sin(t)-\tilde{A}(t_0)],\] with \[\tilde{A}(t)=\int_{0}^{t}a_{1}(s)ds-ta_{10}.\]

With these settings, it is enough to proceed as in the first situation to show that $f\in(\ker{}^tL)^\circ\setminus L\mathcal{E}_{\{\omega\}}(\mathbb{T}^{N+1}).$

This completes the proof in the quasianalytic case.

\medskip

\textit{Case 2:} $\omega$ \textit{is a non-quasianalytic weight function}.

Back to the first situation, we pick $\xi(n)=k_n(1,1,0,\ldots,0)\in\mathbb{Z}^N,$ with $(k_n)\subset\mathbb{N}$ an increasing sequence satisfying $k_n\geq n$ and $a_{20}k_n\not\in\mathbb{Z},$ for all $n\in\mathbb{N}.$

In order to construct the Fourier coefficients of $f,$ we now set \[M^\ast\doteq\max\left\{B(t)-B(t-s)=\int_{t-s}^t b_1(\tau)d\tau; \ 0\leq t,s\leq 2\pi\right\}=B(t_1)-B(t_1-s_1).\]

Since $b_1$ changes sign and $b_{10}=0,$ we have $M^\ast>0$ and $0<s_1<2\pi.$ 

With a translation in the variable $t,$ we may assume that $s_1,$ $t_1$ and $\sigma_1\doteq t_1-s_1$ belong to $(0,2\pi).$ 

Since $\omega$ is non-quasianalytic, we can pick $\Psi\in\mathcal{E}_{\{\omega\}}(\sigma_1-\epsilon,\sigma_1+\epsilon)$ such that $\Psi=1$ on a neighborhood of $[\sigma_1-\epsilon/2,\sigma_1+\epsilon/2]$ and $\mbox{supp} \Psi$ is a compact subset of $(\sigma_1-\epsilon,\sigma_1+\epsilon).$

We define $\hat{f}(t,\xi(n))$ as the $2\pi-$periodic extension of 
\[\Psi(t)e^{-ik_n\int_{t_1}^{t}(a_1+a_2)(\tau)d\tau}e^{-M^\ast k_n}, \ t\in[0,2\pi].\]   

Note that each $\hat{f}(\cdot,\xi(n))$ belongs to $\mathcal{E}_{\{\omega\}}(\mathbb{T}^1).$ In addition, setting \[E_n(t)=e^{-ik_n\int_{t_1}^{t}(a_1+a_2)(\tau)d\tau},\] we have 
\begin{equation}\label{eqr8}|\partial_t^m\hat{f}(t,\xi(n))|\leq e^{-M^\ast k_n}\left[\Psi^{(m)}(t)+\sum_{\ell=1}^{m}\binom{m}{\ell}|\Psi^{(m-\ell)}(t)||E_n^{(\ell)}(t)|\right].\end{equation}

We recall that there exit positive constants $C_1>1$ and $h_1$ such that $|\Psi^{(j)}(t)|\leq C_1e^{h_1^{-1}\varphi^\ast(jh_1)},$ for all $t\in[0,2\pi]$ and $j\in\mathbb{Z}_{+}.$

Similarly, there exit positive constants $C_2>1$ and $h_2$ such that $|(a_1+a_2)^{(j)}(t)|\leq C_2e^{h_2^{-1}\varphi^\ast(jh_2)},$ for all $t\in[0,2\pi]$ and $j\in\mathbb{Z}_{+}.$

Fa\`a di Bruno's formua gives 
\[|E_n^{(\ell)}(t)|\leq \sum_{m_1+2m_2+\cdots+\ell m_\ell=\ell}\frac{\ell!}{m_1!\cdots m_\ell!}\prod_{j=1}^{\ell}k_n^{m_j}\frac{|(a_1+a_2)^{(j-1)}(t)|^{m_j}}{j!^{m_j}}.\]

As before, since $\varphi^\ast$ is non-negative and superadditive, we have $\varphi^\ast((j-1)h_2)\leq \varphi^\ast(j h_2).$ In addition, by \eqref{eqr2} it follows that 
\begin{align*}\prod_{m=1}^{\ell}k_n^{m_j}\frac{|(a_1+a_2)^{(j-1)}(t)|^{m_j}}{j!^{m_j}}\leq & (C_2k_n)^{m_1+\cdots m_\ell}\prod_{j=1}^{\ell}\frac{e^{m_jh_2^{-1}\varphi^\ast((j-1)h_2)}}{j!^{m_j}}\\
\leq& (C_2k_n)^{m_1+\cdots m_\ell}\prod_{j=1}^{\ell}\frac{e^{m_jh_2^{-1}\varphi^\ast(jh_2)}}{j!^{m_j}}\\
\leq& (C_2k_n)^{m_1+\cdots m_\ell}\prod_{j=1}^{\ell}\frac{e^{h_2^{-1}\varphi^\ast(jm_j h_2)}}{(jm_j)!}\\
\leq&(C_2k_n)^{m_1+\cdots m_\ell}\frac{e^{h_2^{-1}\varphi^\ast(\ell h_2)}}{\ell!}
\end{align*}

Setting $k=m_1+\cdots m_\ell,$ we recall that $k_n^ke^{-M^\ast k_n/2}\leq (2/M^\ast)^kk!.$

The above estimates and Lemma 2.2 in \cite{BDG3} imply that
\begin{align*}|E_n^{(\ell)}(t)|e^{-M^\ast k_n/2}\leq &e^{h_2^{-1}\varphi^\ast(\ell h_2)}\sum_{m_1+2m_2+\cdots+\ell m_\ell=\ell}\frac{(2C_2/M^\ast)^kk!}{m_1!\cdots m_\ell!}\\
\leq& \left(\frac{2C_2}{M^\ast}+1\right)^{\ell}e^{h_2^{-1}\varphi^\ast(\ell h_2)}.
\end{align*}

%We now use Lemma 5.9 in \cite{RS} to obtain positive constants $C_3>1$ and $h_3>h_2$ satisfying 
%\[\left(\frac{2C_2}{M^\ast}+1\right)^{\ell}e^{h_2^{-1}\varphi^\ast(\ell h_2)}\leq C_3e^{h_3^{-1}\varphi^\ast(\ell h_3)}.\]

From \eqref{eqr8} and the above estimate we obtain 
\[|\partial_t^m\hat{f}(t,\xi(n))|\leq C_1\left(\frac{2C_2}{M^\ast}+1\right)^me^{-\frac{M^\ast k_n}{2}}\sum_{\ell=0}^{m}\binom{m}{\ell}e^{h_1^{-1}\varphi^\ast((m-\ell)h_1)}e^{h_2^{-1}\varphi^\ast(\ell h_2)}\]

Picking $h_3=\max\{h_1,h_2\},$ since $\varphi^\ast(t)/t$ is increasing we obtain 
\[e^{h_1^{-1}\varphi^\ast((m-\ell)h_1)}e^{h_2^{-1}\varphi^\ast(\ell h_2)}\leq e^{h_3^{-1}\varphi^\ast((m-\ell)h_3)}e^{h_3^{-1}\varphi^\ast(\ell h_3)}.\]

Again, by using \eqref{eqr2} it follows that 
\[\frac{m!}{\ell!(m-\ell)!}e^{h_3^{-1}\varphi^\ast((m-\ell)h_3)}e^{h_3^{-1}\varphi^\ast(\ell h_3)}\leq e^{h_3^{-1}\varphi^\ast(mh_3)}.\]

Applying the above estimates we obtain
\begin{align*}|\partial_t^m\hat{f}(t,\xi(n))|\leq& C_1\left(\frac{2C_2}{M^\ast}+1\right)^me^{-M^\ast k_n/2}(m+1)e^{h_3^{-1}\varphi^\ast(mh_3)}.
\end{align*}

Lemma 5.9 in \cite{RS} gives positive constants $C_3>1$ and $h_4>h_3$ such that \[C_1\left(\frac{2C_2}{M^\ast}+1\right)^m(m+1)e^{h_3^{-1}\varphi^\ast(mh_3)}\leq C_3e^{h_4^{-1}\varphi^\ast(mh_4)}.\]

Hence, \[|\partial_t^m\hat{f}(t,\xi(n))|\leq C_3e^{-M^\ast k_n/2}e^{h_4^{-1}\varphi^\ast(mh_4)}.\]

In addition, since $\omega(t)=O(t),$ as $t\rightarrow\infty,$ we may obtain $\gamma^\ast>0$ such that \[-\frac{M^\ast}{2}k_n+\gamma^\ast\omega(|\xi(n)|)=-\frac{M^\ast}{2}k_n+\gamma^\ast\omega(2k_n)\leq 0,\] for all $n.$  

It follows that 
\[|\partial_t^m\hat{f}(t,\xi(n))|\leq C_3e^{h_4^{-1}\varphi^\ast(mh_4)}e^{-\gamma^\ast\omega(|\xi(n)|)}.\]

As presented in Section \ref{sec3}, the control on the partial Fourier coefficients of $f$ given by the above estimate implies that $f\in\mathcal{E}_{\{\omega\}}(\mathbb{T}^{N+1}).$

It is easy to see that $f\in(\ker{}^tL)^\circ,$ since the condition $k_n(a_{10}+a_{20})\not\in\mathbb{Z}$ implies that $\hat{\mu}(\cdot,-\xi(n))=0,$ for all $n$ and $\mu\in\ker{}^tL.$

We now proceed to show that $f\not\in L\mathcal{E}_{\{\omega\}}(\mathbb{T}^{N+1}).$ As before, if $u\in\mathcal{E}_{\{\omega\}}(\mathbb{T}^{N+1})$ and $Lu=f,$ then setting $\tilde{E_n}=1-e^{-2\pi i k_n(a_{10}+a_{20})}$ we have
\begin{align*}\hat{u}(t,\xi(n))=&\tilde{E}_n^{-1}\int_{0}^{2\pi}\hat{f}(t-s,\xi(n))e^{k_n\int_{t-s}^{t}b_1(\tau)d\tau}e^{-ik_n\int_{t-s}^{t}(a_1+a_2)(\tau)d\tau}ds\\
=&\int_{0}^{2\pi}\Psi(t-s)e^{k_n(B(t)-B(t-s)-M^\ast)}e^{-ik_n\int_{t_1}^{t}(a_1+a_2)(\tau)d\tau}ds.
\end{align*} 

In particular, 
\[|\hat{u}(t_1,\xi(n))|\geq\frac{1}{2}\int_{|s-s_1|<\epsilon/2}e^{-k_n(M^\ast+B(t_1-s)-B(t_1))}ds.\]

Notice that $s_1$ is a point of minimum of the function $M^\ast+B(t_1-s)-B(t_1).$ Hence, $s_1$ is either a zero of even order or a zero of infinite order. As in \cite{BDG1,BDG3}, we may apply the Lapace method for integrals on order to obtain a positive constant, does not depending on $n,$ such that \[|\hat{u}(t_1,\xi(n))|\geq \frac{C}{\sqrt{k_n}}.\] 

As in the \textrm{Case 1}, the above estimate produces a contradiction. Therefore, we conclude that $f$ cannot belong to $L\mathcal{E}_{\{\omega\}}(\mathbb{T}^{N+1}).$   
 
Finally, if we are in the second situation, then we pick $\xi(n)=k_n(1,0,\ldots,0)$ and $\hat{f}(t,\xi(n))$ is the $2\pi-$periodic extension of 
\[\Psi(t)e^{-ik_n\int_{t_1}^{t}a_1(\tau)d\tau}e^{-M^\ast k_n}, \ t\in[0,2\pi].\]   
 
In order to prove that $f$ belongs to $(\ker{}^tL)^\circ\setminus L\mathcal{E}_{\{\omega\}}(\mathbb{T}^{N+1}),$ it is enough to proceed as above.

\medskip

This completes the proof of Proposition \ref{pror1}. 
\end{proof}

We now proceed to establish other necessary conditions for the global $\{\omega\}-$solvability of $L.$

\begin{proposition}\label{pror3} The operator $L$ given by {\em(\ref{L_introduction})} is not globally $\{\omega\}-$solvable if $b_{j0}\neq0,$ for some $j=1,\ldots,N,$ and either $\dim \emph{\textrm{span}}\{b_1,\ldots,b_N\}>1$ or at least one $b_k(t)$ changes sign.
\end{proposition}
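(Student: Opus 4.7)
The plan is a proof by contradiction in the spirit of Proposition~\ref{pror1}. Assuming $L$ is globally $\{\omega\}$-solvable, I will produce $f\in(\ker{}^tL)^\circ\cap\mathcal{E}_{\{\omega\}}(\mathbb{T}^{N+1})$ with no solution $u\in\mathcal{E}_{\{\omega\}}(\mathbb{T}^{N+1})$ to $Lu=f$. The construction hinges on first choosing a single direction $\xi^*\in\mathbb{Z}^N$ satisfying two properties: $\langle\xi^*,\beta_0\rangle\ne0$, so the Fourier mode $\xi^*$ is non-resonant and no compatibility condition is needed; and $t\mapsto\langle\xi^*,\beta(t)\rangle$ changes sign on $\mathbb{T}^1$, so the ODE solution can be forced to decay only polynomially.

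Such $\xi^*$ exists under the hypotheses by a short case analysis. If some $b_k$ changes sign with $b_{k0}\ne0$, take $\xi^*=e_k$. If some $b_k$ changes sign but $b_{k0}=0$, fix $j$ with $b_{j0}\ne0$; then $b_j$ and $b_k$ are automatically linearly independent, and the choice $\xi^*=e_j+Me_k$ for $M\in\mathbb{Z}$ of sufficiently large modulus gives $\langle\xi^*,\beta_0\rangle=b_{j0}\ne0$ while inheriting the sign change of $Mb_k$. If no $b_k$ changes sign but $\dim\textrm{span}\{b_1,\ldots,b_N\}>1$, pick indices $j_1,j_2$ with $b_{j_1},b_{j_2}$ linearly independent and, after sign normalization, non-negative; since the ratio $b_{j_1}/b_{j_2}$ is then non-constant, balancing positive integers $p,q$ so that $p/q$ lies in the range of that ratio but not equal to $b_{j_10}/b_{j_20}$ makes $\xi^*=pe_{j_1}-qe_{j_2}$ meet both requirements.

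Fix such $\xi^*$ and assume WLOG $\langle\xi^*,\beta_0\rangle>0$; let $t^*$ be a simple sign change of $\langle\xi^*,\beta(\cdot)\rangle$, and put $\xi(n)=n\xi^*$. Non-resonance guarantees that for any $\hat f(\cdot,\xi(n))$ the Fourier ODE $\partial_tw+n(i\langle\xi^*,\alpha\rangle-\langle\xi^*,\beta\rangle)w=\hat f(\cdot,\xi(n))$ admits a unique $2\pi$-periodic solution, and that any $\mu\in\ker{}^tL$ has $\hat\mu(\cdot,-\xi(n))=0$. I then define $\hat f(\cdot,\xi(n))$ exactly as in the two cases of Proposition~\ref{pror1}: in the non-quasianalytic case, as the $2\pi$-periodic extension of a compactly supported $\{\omega\}$-cutoff $\Psi$ localized near $t^*$, times an $\alpha$-phase factor and a prefactor $e^{-M^*n}$ with $M^*=\max_{t,s}\int_{t-s}^t\langle\xi^*,\beta(\tau)\rangle d\tau$; in the quasianalytic case, as $\tilde E_ne^{-n\psi(t)}$ with an analytic phase $\psi$ having a non-degenerate critical point at $t^*$. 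Replaying the Fa\`a di Bruno and $\varphi^\ast$-superadditivity estimates of that proposition gives $f\in\mathcal{E}_{\{\omega\}}(\mathbb{T}^{N+1})$, and the previous remark gives $f\in(\ker{}^tL)^\circ$.

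If a solution $u\in\mathcal{E}_{\{\omega\}}(\mathbb{T}^{N+1})$ existed, then $\hat u(\cdot,\xi(n))$ would be the explicit integrating-factor solution, and evaluating at $t^*$ reduces it to an oscillatory-Laplace integral whose amplitude is controlled by $e^{-M^*n}$ and whose phase has a non-degenerate critical point at $t^*$ inherited from the simple zero of $\langle\xi^*,\beta(\cdot)\rangle$. Theorem~7.7.5 of \cite{H1} (quasianalytic case) or the classical Laplace method (non-quasianalytic case) then yields $|\hat u(t^*,\xi(n))|\ge Cn^{-1/2}$, which contradicts the $\{\omega\}$-decay bound $|\hat u(t^*,\xi(n))|\le C'e^{-\gamma'\omega(n|\xi^*|)}$ since $\log t=o(\omega(t))$. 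The main obstacle is the case analysis in constructing $\xi^*$ when no $b_k$ changes sign: one must verify that an integer combination of two linearly independent non-negative smooth functions can be simultaneously sign-changing and of nonzero mean, together with the integer approximation step. The rest reduces to routine bookkeeping parallel to Proposition~\ref{pror1}.
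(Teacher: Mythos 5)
Your proposal is correct and follows essentially the same route as the paper: reduce to a single direction $\xi^*$ with $\langle\xi^*,\beta_0\rangle\neq0$ and $\langle\xi^*,\beta(\cdot)\rangle$ changing sign (the paper obtains this via Lemma 3.1 of \cite{BDGK} in the $\dim>1$ case, and by observing that in the $\dim=1$ case some $b_j$ with $b_{j0}\neq0$ itself changes sign), then replay the quasianalytic/non-quasianalytic constructions of Proposition \ref{pror1} along $\xi(n)=n\xi^*$ and derive the $n^{-1/2}$ lower bound contradicting $\{\omega\}$-decay. The only slight imprecision is your appeal to a \emph{simple} sign change and a non-degenerate critical point ``inherited'' from it: the zero need not be simple, but this is immaterial since the quasianalytic phase is made non-degenerate artificially via the $K(1-\cos t)$ term, and in the non-quasianalytic case the minimum at $s_1$ is of even or infinite order and the Laplace method still yields at least $Cn^{-1/2}$.
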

\begin{proof}The arguments are quite similar to those in Proposition \ref{pror1}. We will sketch some details.

Suppose firstly that $\dim\textrm{span}\{b_1,\ldots,b_N\}>1.$ In this case, there exist $m$ and $\ell$ in $\{1,\ldots,N\}$ such that $b_{m0}\neq0,$ and $b_m(t)$ and $b_\ell(t)$ are $\mathbb{R}-$linearly independent functions. There is no loss of generality in assuming that $m=1$ and $\ell=2.$

Lemma 3.1 of \cite{BDGK} implies the existence of integers $p$ and $q$ such that \[\mathbb{T}^1\ni t\mapsto\theta(t)=pb_1(t)+qb_2(t)\] changes sign and $\theta_0\doteq(2\pi)^{-1}\int_{0}^{2\pi}\theta(t)dt<0.$

Picking $\xi(n)=n(p,q,0,\ldots,0)\in\mathbb{Z}^N,$ $n\in\mathbb{Z}_{+},$ we will construct a function \begin{equation}\label{eqq12}f(t,x)=\sum_{n=1}^{\infty}\hat{f}(t,\xi(n))e^{i\langle\xi(n),x\rangle}\end{equation} satisfying $f\in(\ker{}^tL)^\circ\setminus L\mathcal{E}_{\{\omega\}}(\mathbb{T}^{N+1}).$

\medskip

\textit{Case 1:} $\{\omega\}$ \textit{is a quasianalytic weight function}.

We will describe the construction of the partial Fourier coefficients of $f.$ They will be of the form \[\hat{f}(t,\xi(n))=E_ne^{-n\psi(t)},\] in which \[E_n=1-e^{-2\pi i n(pa_{10}+qa_{20})}\] and $\psi\in\mathcal{E}_{\{\omega\}}(\mathbb{T}^{1})$ will be constructed in the sequel.

Since $\theta$ changes sign, we have $\emptyset\neq \theta^{-1}(0)\neq\mathbb{T}^1.$ Since $\omega$ is quasianalytic, it follows that the zeros of $\theta$ are isolated. As before, a translation in the variable $t$ allows us to assume that $\theta(0)=0,$ $\theta<0$ on a small interval $[-\epsilon,0),$ and $\theta>0$ on $(0,\epsilon].$

Set \[A(t)=\int_{0}^{t}(pa_1(s)+qa_2(s))ds-t(pa_{10}+qa_{20})\] and \[\Theta(t)=\int_{0}^{t}\theta(s)ds.\] It follows that $A$ belongs to $\mathcal{E}_{\{\omega\}}(\mathbb{T}^{1})$ and $\Theta$ is a smooth function satisfying $\Theta(0)=0$ and $\Theta(2\pi)<\Theta(2\pi-\epsilon).$ In contrast to the proof of Proposition \ref{pror1}, we stress that $\Theta(t)$ is not periodic, but it is not required in this proof.

Pick $t_0\in(0,2\pi)$ such that \[M\doteq \Theta(t_0)=\max_{t\in[0,2\pi]}\Theta(t)>0\] and define $\psi:\mathbb{T}^1\rightarrow\mathbb{C}$ by
\[\psi(t)=M+K(1-\cos(t))+i[(pa_{1}(0)+qa_{2}(0))\sin(t)-A(t_0)],\] in which $K>0$ is a constant. 

By proceeding as in the proof of Proposition \ref{pror1} we may adjust $K$ so that $f$ belong to $(\ker{}^tL)^\circ\setminus L\mathcal{E}_{\{\omega\}}(\mathbb{T}^{N+1}).$ Since this procedure is mutatis mutandis to the one in Proposition \ref{pror1}, we will omit the computations.

\medskip

\textit{Case 2:} $\omega$ \textit{is a non-quasianalytic weight function}.

Set \[M^\ast\doteq\max\left\{\Theta(t)-\Theta(t-s)=\int_{t-s}^t \theta(\tau)d\tau; \ 0\leq t,s\leq 2\pi\right\}=\Theta(t_1)-\Theta(t_1-s_1).\]

Since $\theta$ changes sign and $\theta_{0}<0,$ we have $M^\ast>0$ and $0<s_1<2\pi.$ 

As before, we may assume that $s_1,$ $t_1$ and $\sigma_1\doteq t_1-s_1$ belong to $(0,2\pi).$ 

We now pick $\Psi\in\mathcal{E}_{\{\omega\}}(\sigma_1-\epsilon,\sigma_1+\epsilon)$ such that $\Psi=1$ on a neighborhood of $[\sigma_1-\epsilon/2,\sigma_1+\epsilon/2]$ and $\mbox{supp} \Psi$ is a compact subset of $(\sigma_1-\epsilon,\sigma_1+\epsilon).$

Define $\hat{f}(t,\xi(n))$ as the $2\pi-$periodic extension of 
\[\Psi(t)e^{-in\int_{t_1}^{t}(pa_1+qa_2)(\tau)d\tau}e^{-M^\ast n}, \ t\in[0,2\pi].\]   

Each $\hat{f}(\cdot,\xi(n))$ belongs to $\mathcal{E}_{\{\omega\}}(\mathbb{T}^1).$ In addition, proceeding as in the proof of Proposition \ref{pror1} (\textit{Case 2}), we may show that $f\in(\ker{}^tL)^\circ\setminus L\mathcal{E}_{\{\omega\}}(\mathbb{T}^{N+1})\subset\mathcal{E}_{\{\omega\}}(\mathbb{T}^{N+1}).$ 

\bigskip

Assume now that one $b_k(t)$ changes sign and $\dim \textrm{span}\{b_1,\ldots,b_N\}=1.$ In this case, there exists $j$ such that $b_j$ changes sign and $b_{j0}\neq0.$ Without loos of generality, we may assume $j=1.$ In order to prove that $L$ is not globally $\{\omega\}-$solvable, it is enough to proceed as in the previous two cases, picking $\xi(n)=n(1,0\ldots,0).$  

\end{proof}

Next result completes the establishment of necessary conditions to the global $\{\omega\}-$solvability of $L.$ Its proof is an improvement of techniques in \cite{BDG1,BDG3} which enables us to treat simultaneously the quasianalytic and non-quasianalytic cases.

\begin{proposition}\label{pror4} The operator $L$ given by {\em(\ref{L_introduction})} is not globally $\{\omega\}-$solvable if $(\alpha_{0},\beta_{0})$ does not satisfy condition $(EDC)_{2}^{\{\omega\}}.$
\end{proposition}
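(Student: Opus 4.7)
The plan is to adapt the direct construction from the proofs of Propositions~\ref{pror1} and~\ref{pror3}: exhibit $f\in(\ker{}^tL)^\circ\cap\mathcal{E}_{\{\omega\}}(\mathbb{T}^{N+1})$ for which the equation $Lu=f$ admits no solution $u\in\mathcal{E}_{\{\omega\}}(\mathbb{T}^{N+1})$. The failure of $(EDC)_2^{\{\omega\}}$ supplies $\epsilon_0>0$ and a sequence $(\tau_n,\xi_n)\in\mathbb{Z}^{N+1}$ with $\lambda_n:=\tau_n+\langle\xi_n,\alpha_0+i\beta_0\rangle\neq 0$ and
\[
|\lambda_n|<n^{-1}\exp\{-\epsilon_0\omega(|(\tau_n,\xi_n)|)\}.
\]
For $n$ large, $|\lambda_n|<1$ forces $\langle\xi_n,\alpha_0+i\beta_0\rangle\notin\mathbb{Z}$, so $\hat\mu(\cdot,-\xi_n)=0$ for every $\mu\in\ker{}^tL$.

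The candidate $f$ is defined through its partial Fourier coefficients in $x$ by
\[
\hat f(t,\xi_n)=c_n e^{i\tau_n t},\quad \hat f(t,\xi)=0\text{ for }\xi\notin\{\xi_n\},\quad c_n=\exp\{-(\epsilon_0/2)\omega(|(\tau_n,\xi_n)|)\}.
\]
The characterization \eqref{eqr10} places $f$ in $\mathcal{E}_{\{\omega\}}(\mathbb{T}^{N+1})$, and the preceding remark gives $f\in(\ker{}^tL)^\circ$. If $u\in\mathcal{E}_{\{\omega\}}(\mathbb{T}^{N+1})$ satisfied $Lu=f$, partial Fourier in $x$ would produce, for each $n$, the ODE
\[
\partial_t\hat u(t,\xi_n)+i\phi_{\xi_n}'(t)\hat u(t,\xi_n)=c_n e^{i\tau_n t},\qquad \phi_{\xi_n}(t)=\int_0^t\langle\xi_n,\alpha+i\beta\rangle\,ds.
\]
Integrating with the factor $e^{i\phi_{\xi_n}(t)}$ and imposing $2\pi$-periodicity yields
\[
\hat u(0,\xi_n)=\frac{c_n J_n}{e^{2\pi i\lambda_n}-1},\qquad J_n=\int_0^{2\pi}e^{i\lambda_n s+i\Phi_{\xi_n}^{\mathrm{osc}}(s)}\,ds,
\]
with $\Phi_{\xi_n}^{\mathrm{osc}}(s)=\int_0^s\langle\xi_n,(\alpha-\alpha_0)+i(\beta-\beta_0)\rangle\,d\sigma$.

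The final step will be a Laplace/stationary-phase lower bound $|J_n|\gtrsim|\xi_n|^{-1/2}$, in the spirit of the application of Theorem~7.7.5 of \cite{H1} invoked in Proposition~\ref{pror1}, obtained after extracting a subsequence of the violating pairs that secures a non-degenerate critical point of the phase. Combined with $|e^{2\pi i\lambda_n}-1|\le 2\pi|\lambda_n|$, this gives
\[
|\hat u(0,\xi_n)|\gtrsim \frac{n\,e^{(\epsilon_0/2)\omega(|(\tau_n,\xi_n)|)}}{\sqrt{|\xi_n|}},
\]
and by $\log t=o(\omega(t))$ the right-hand side outgrows any $Ce^{-\gamma\omega(|\xi_n|)}$, contradicting the decay on $\hat u(\cdot,\xi_n)$ forced by $u\in\mathcal{E}_{\{\omega\}}(\mathbb{T}^{N+1})$. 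The main obstacle is securing this uniform lower bound on $|J_n|$: when the phase is degenerate, one will replace the amplitude $e^{i\tau_n t}$ by a cutoff localized near an extremum (in the non-quasianalytic regime) or by a suitable analytic concentration (in the quasianalytic regime), mirroring the two-case split of Proposition~\ref{pror1}. The claimed improvement over \cite{BDG1,BDG3} is that the freedom inherent in the failure of $(EDC)_2^{\{\omega\}}$ allows a single analytic choice of amplitude to handle both settings simultaneously.
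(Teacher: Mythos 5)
Your setup is correct up to the exact identity
\[
\hat u(0,\xi_n)\,(e^{2\pi i\lambda_n}-1)=c_nJ_n,\qquad J_n=\int_0^{2\pi}e^{i\lambda_n s+i\Phi^{\mathrm{osc}}_{\xi_n}(s)}\,ds,
\]
and the observation that $0<|\lambda_n|<1$ forces $\langle\xi_n,\alpha_0+i\beta_0\rangle\notin\mathbb{Z}$, hence $f\in(\ker{}^tL)^\circ$, is fine. But the entire weight of the argument rests on the deferred lower bound $|J_n|\gtrsim|\xi_n|^{-1/2}$, and this is a genuine gap, not a routine application of stationary phase. The integrand has modulus $e^{-B_n(s)}$ with $B_n(s)=\int_0^s\langle\xi_n,\beta(\sigma)-\beta_0\rangle\,d\sigma$, which for a general coefficient $\beta$ ranges over an interval of length comparable to $|\xi_n|$; the phase also carries the real oscillation $A_n(s)=\int_0^s\langle\xi_n,\alpha-\alpha_0\rangle\,d\sigma$ whose critical points depend on the direction $\xi_n/|\xi_n|$ and can be degenerate or fail to coincide with the maximum of $-B_n$. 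Theorem 7.7.5 of \cite{H1} gives no uniform lower bound in this generality, and localizing the amplitude (thereby changing $f$) reproduces exactly the delicate geometric analysis of Propositions \ref{pror1} and \ref{pror3} — which there is powered by a sign change of $\langle\xi,\beta\rangle$ that is precisely absent in the situation relevant to this proposition.

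The paper avoids the oscillatory integral altogether by two moves you are missing. First, it invokes Propositions \ref{pror1} and \ref{pror3} to reduce to the structured form \eqref{L_ldr}, where $\beta(t)=b(t)\lambda$ with $b$ of constant sign; then the violating inequality \eqref{eqq13} makes $|\langle\xi(n),b_0\lambda\rangle|$ exponentially small, and combined with the constant sign of $b$ this forces $e^{\pm B_n(t)}$ to be \emph{uniformly bounded} — the control you cannot get in general. Second, instead of taking $\hat f(t,\xi_n)=c_ne^{i\tau_nt}$, it builds the conjugation factor $e^{B_n(t)-iA_n(t)}$ into $\hat f$ itself; conjugating the putative solution by $e^{-B_n+iA_n}$ then yields a \emph{constant-coefficient} equation whose full Fourier coefficient in $t$ at frequency $\tau_n$ is computed exactly: $i\lambda_n\hat v(\tau_n,\xi(n))=c_n$, whence $|\hat v(\tau_n,\xi(n))|>e^{\epsilon_0\omega(|\tau_n|+|\xi(n)|)/2}\geq1$, contradicting the decay of the coefficients of $v\in\mathcal{E}_{\{\omega\}}(\mathbb{T}^{N+1})$. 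No lower bound for an oscillatory integral is ever needed, and the quasianalytic/non-quasianalytic split disappears for free. To repair your proof you would need both the structural reduction (to bound $e^{-B_n}$) and either the conjugation trick or a genuine, currently unsupplied, uniform lower bound for $J_n$.
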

\begin{proof} If $b_{j0}\neq0,$ for some $j=1,\ldots,N,$ then by Proposition \ref{pror3} we may assume that each $b_j$ does not change sign and $\dim \textrm{span}\{b_1,\ldots,b_N\}=1.$ Thus, we can write the operator $L$ as \begin{equation}\label{L_ldr}L=\frac{\partial}{\partial t}+\sum_{j=0}^{N}(a_{j}(t)+i\lambda_jb(t))\frac{\partial}{\partial x_j},\end{equation} where $b\in \mathcal{C}^{\infty}(\mathbb{T}^1,\mathbb{R}),$ $b$ does not change sign, $b_0\doteq(2\pi)^{-1}\int_{0}^{2\pi}b(t)dt\neq0,$ and $\lambda\doteq(\lambda_1,\ldots,\lambda_N)\in\mathbb{R}^N.$

If $b_{j0}=0,$ for each $j,$ and some $b_k$ does not vanish identically, then by Proposition \ref{pror1} we may assume that $\alpha_0=(a_{10},\ldots,a_{N0})\in\mathbb{Z}^N.$ Since $\beta_0=(b_{10},\ldots,b_{N0})=(0,\ldots,0),$ it follows that $(\alpha,\beta)$ satisfies $(EDC)_{2}^{\{\omega\}}.$ In other words, when $b_{j0}=0$ for each $j,$ then we may assume that each $b_j$ vanishes identically. In this case, the operator $L$ is again in the form \eqref{L_ldr}, with $\lambda=(0,\ldots,0).$

By the above comments, throughout this proof we assume that $L$ is given by \eqref{L_ldr}. Recall also that $\beta_0=(b_{10},\ldots,b_{N0})=b_0(\lambda_1,\ldots,\lambda_N)=b_0\lambda,$ and $\beta(t)=(b_1(t),\ldots,b_N(t))=b(t)(\lambda_1,\ldots,\lambda_N)=b(t)\lambda.$

By assumption, $(\alpha_0,\beta_0)=(\alpha_0,b_0\lambda)$ does not satisfy $(EDC)_{2}^{\{\omega\}}.$ Thus, there exist $\epsilon_0>0$ and a sequence $(\tau_n,\xi(n))\subset\mathbb{Z}\times\mathbb{Z}^N$ such that $|\tau_n|+|\xi(n)|\geq n$ and
\begin{equation}\label{eqq13}
0<|\tau_n+\langle\xi(n),\alpha_0+ib_0\lambda\rangle|<\exp\{-\epsilon_0\omega(|\tau_n|+|\xi(n)|)\},
\end{equation}for all $n\in\mathbb{N}.$

Set \[A_n(t)=\int_{0}^{t}\langle\xi(n),\alpha(s)\rangle ds-t\langle\xi(n),\alpha_0\rangle\] and \[B_n(t)=\int_{0}^{t}\langle\xi(n),b(s)\lambda\rangle ds-t\langle\xi(n),b_0\lambda\rangle.\] 

Estimate \eqref{eqq13} implies that the sequences of functions \[\exp\{B_n(t)\} \ \ \textrm{and} \ \ \exp\{-B_n(t)\}\] are bounded.

Let $f$ be given as in \eqref{eqq12}, with Fourier coefficients given by
\[\hat{f}(t,\xi(n))=\exp\{-\epsilon_0\omega(|\tau_n|+|\xi(n)|)/2\}\exp\{i\tau_n t\}\exp\{B_n(t)-iA_n(t)\}.\]

Each $f(\cdot,\xi(n))$ belongs to $\mathcal{E}_{\{\omega\}}(\mathbb{T}^1)$ and the term $\exp\{-\epsilon_0\omega(|\tau_n|+|\xi(n)|)/2\}$ will imply that $f\in\mathcal{E}_{\{\omega\}}(\mathbb{T}^{N+1}).$ We will sketch some details. Setting $c_n=\exp\{-\epsilon_0\omega(|\tau_n|+|\xi(n)|)/2\}$ and picking $C_1$ such that $e^{B_n(t)}\leq C_1,$ for all $n$ and $t,$ then Fa\`{a} Di Bruno's formula gives
\begin{multline*}|\partial_t^m\hat{f}(t,\xi(n))|\leq \sum_{k_1+\cdots+mk_m=m}\frac{m!C_1c_n}{k_1!\ldots k_m!}(i\tau_n+\langle\xi(n),\lambda(b(t)-b_0)+i(\alpha_0-\alpha(t))\rangle)^{k_1}\times\\
\prod_{j=2}^{m}\left(\frac{\langle\xi(n),b^{(j-1)}(t)\lambda-i\alpha^{(j-1)}(t)\rangle}{j!}\right)^{k_j}\end{multline*}

Since $b\in\mathcal{E}_{\{\omega\}}(\mathbb{T}^{1})$ and $\alpha(t)=(a_1(t),\ldots,a_{N}(t)),$ with $a_\ell\in\mathcal{E}_{\{\omega\}}(\mathbb{T}^{1}),$ we may obtain positive constants $C_2$ and $h_2,$ do no depending on $n$ or $j,$ such that 
\[|i\tau_n+\langle\xi(n),\lambda(b(t)-b_0)+i(\alpha_0-\alpha(t))\rangle|\leq (|\tau_n|+|\xi(n)|)C_2\] and \[|\langle\xi(n),b^{(j-1)}(t)\lambda-i\alpha^{(j-1)}(t)\rangle|\leq|\xi(n)|C_2e^{h_2^{-1}\varphi^{\ast}((j-1)h_2)},\] for all $n$ and $t.$

It follows that \[|\partial_t^m\hat{f}(t,\xi(n))|\leq C_1c_n\sum_{k_1+\cdots+mk_m=m}\frac{m!C_2^k(|\tau_n|+|\xi(n)|)^k}{k_1!\ldots k_m!}\prod_{j=1}^{m}\left(\frac{e^{h_2^{-1}\varphi^{\ast}((j-1)h_2)}}{j!}\right)^{k_j},\] in which $k=k_1+\cdots+k_m.$

As we have done previously, by using \eqref{eqr2} we obtain \[\prod_{j=1}^{m}\frac{e^{k_jh_2^{-1}\varphi^{\ast}((j-1)h_2)}}{j!^{k_j}}\leq\prod_{j=1}^{m} \frac{e^{h_2^{-1}\varphi^{\ast}(k_j(j-1)h_2)}}{[k_j(j-1)]!j^{k_j}}\leq \frac{e^{h_2^{-1}\varphi^{\ast}((m-k)h_2)}}{(m-k)!1^{k_1}2^{k_2}\ldots m^{k_m}}\] 

Hence, \begin{align*}|\partial_t^m\hat{f}(t,\xi(n))|\leq &C_1c_n\sum_{k_1+\cdots+mk_m=m}\frac{C_2^k(|\tau_n|+|\xi(n)|)^ke^{h_2^{-1}\varphi^{\ast}((m-k)h_2)}m!}{k_1!\ldots k_m!1^{k_1}2^{k_2}\ldots m^{k_m}(m-k)!}\\
\leq&C_1c_n2^m\sum_{k_1+\cdots+mk_m=m}\frac{C_2^k(|\tau_n|+|\xi(n)|)^ke^{h_2^{-1}\varphi^{\ast}((m-k)h_2)}k!}{k_1!\ldots k_m!1^{k_1}2^{k_2}\ldots m^{k_m}}.
\end{align*}

By Lemma 2.6 in \cite{BFP}, for each $K\in\mathbb{N}$ we obtain 
\[(|\tau_n|+|\xi(n)|)^ke^{-\omega(|\tau_n|+|\xi(n)|)/K}\leq e^{K^{-1}\varphi^{\ast}(kK)},\] for all $n$ and $k\in\mathbb{Z}_+.$ 

Picking $K\geq 4/\epsilon_0,$ it follows that \[(|\tau_n|+|\xi(n)|)^ke^{-\epsilon_0\omega(|\tau_n|+|\xi(n)|)/4}\leq e^{K^{-1}\varphi^{\ast}(kK)}.\] 

If $h_3=\max\{h_2,K\},$ then we obtain 
\begin{multline*}|\partial_t^m\hat{f}(t,\xi(n))|\leq \\
C_12^me^{-\epsilon_0\omega(|\tau_n|+|\xi(n)|)/4}e^{h_3^{-1}\varphi^{\ast}(mh_3)}\sum_{k_1+\cdots+mk_m=m}\frac{C_2^kk!}{k_1!\ldots k_m!1^{k_1}2^{k_2}\ldots m^{k_m}}
\end{multline*}

Lemma 2.2 in \cite{BDG3} gives
\[|\partial_t^m\hat{f}(t,\xi(n))|\leq
C_1(2C_2+2)^me^{-\epsilon_0\omega(|\tau_n|+|\xi(n)|)/4}e^{h_3^{-1}\varphi^{\ast}(mh_3)}\]

Applying Lemma 5.9 in \cite{RS}, we obtain $C_3>0$ and $h>0$ such that 
\[|\partial_t^m\hat{f}(t,\xi(n))|\leq C_3e^{-\epsilon_0\omega(|\xi(n)|)/4}e^{h^{-1}\varphi^{\ast}(mh)},\] for all $m,$ $t$ and $n.$ 

Estimate above implies that $f\in\mathcal{E}_{\{\omega\}}(\mathbb{T}^{N+1}).$

Since $\tau_n+\langle\xi(n),\alpha_0+ib_0\lambda\rangle\neq0,$ we obtain $f\in(\ker{}^tL)^{\circ}.$

In addition, if there exists $u\in \mathcal{E}_{\{\omega\}}(\mathbb{T}^{N+1})$ such that $Lu=f,$ then the function \[v(t,x)=\sum_{n=1}^{\infty}\hat{u}(t,\xi(n))e^{-B_n(t)+iA_n(t)}e^{i\langle\xi(n),x\rangle}\] belongs to $\mathcal{E}_{\{\omega\}}(\mathbb{T}^{N+1})$ and its partial Fourier coefficients satisfy \begin{align*}\partial_t\hat{v}(t,\xi(n))+\langle \xi(n),i\alpha_0-b_0\lambda\rangle\hat{v}(t,\xi(n))=&\hat{f}(t,\xi(n))e^{iA_n(t)-B_n(t)}\\
=&e^{-\epsilon_0\omega(|\tau_n|+|\xi(n)|)/2}e^{i\tau_n t}.\end{align*}

By using Fourier series in the variable $t,$ we write \[\hat{v}(t,\xi(n))=\sum_{\tau\in\mathbb{Z}}\hat{v}(\tau,\xi(n))e^{i\tau t}.\] 

It follows that $i(\tau_n+\langle\xi(n),\alpha_0+ib_0\lambda\rangle)\hat{v}(\tau_n,\xi(n))=e^{-\epsilon_0\omega(|\tau_n|+|\xi(n)|)/2}.$ Consequently, \[|\hat{v}(\tau_n,\xi(n))|\geq e^{-\epsilon_0\omega(|\tau_n|+|\xi(n)|)/2}|\tau_n+\langle\xi(n),\alpha_0+ib_0\lambda\rangle|^{-1}> e^{\epsilon_0\omega(|\xi(n)|)/2}\geq1,\] which is a contradiction, since $\hat{v}(\tau_n,\xi(n))$ must decay.

\medskip

The proof of Proposition \ref{pror4} is complete.
\end{proof}

\section{Global $\{\omega\}-$solvability - sufficient conditions}\label{sec5}

In the previous section we found necessary conditions to the global $\{\omega\}-$solvability of the operator $L$ given by \eqref{L_introduction}, which we recall \[L=\frac{\partial}{\partial t}+\sum_{j=1}^{N}(a_j+ib_j)(t)\frac{\partial}{\partial x_j}.\]

The purpose now is to complete the proof of Theorem \ref{mtr2} by showing sufficiency of these conditions.

\begin{proposition}\label{pror5} 
Suppose that $b_{j0}=0,$ for all $j=1,\ldots,N,$ and at least one $b_k$ does not vanish identically. Suppose also that $(a_{10},\ldots,a_{N0})\in\mathbb{Z}^N$ and that all the sublevel sets $\Omega_r^{\xi}$ are connected. Under these conditions, the operator $L$ given by \eqref{L_introduction} is globally $\{\omega\}-$solvable.
\end{proposition}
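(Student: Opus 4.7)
The plan is to construct, for any $f \in (\ker{}^tL)^\circ$, a solution $u \in \mathcal{E}_{\{\omega\}}(\mathbb{T}^{N+1})$ of $Lu=f$ via partial Fourier series in $x$. Writing $f = \sum_\xi \hat f(t,\xi)e^{i\langle\xi,x\rangle}$ and $u = \sum_\xi \hat u(t,\xi)e^{i\langle\xi,x\rangle}$, the equation $Lu=f$ becomes the family of first order ODEs
\[
\partial_t \hat u(t,\xi) + i\langle\xi,\alpha(t)+i\beta(t)\rangle\,\hat u(t,\xi) = \hat f(t,\xi), \qquad \xi\in\mathbb{Z}^N.
\]
Set $X_\xi(t)=\int_0^t(\langle\xi,\alpha(\tau)\rangle-\langle\xi,\alpha_0\rangle)\,d\tau$, $Y_\xi(t)=\int_0^t\langle\xi,\beta(\tau)\rangle\,d\tau$, and $Z_\xi(t) = X_\xi(t)+t\langle\xi,\alpha_0\rangle+iY_\xi(t)$. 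Because $\langle\xi,\alpha_0\rangle\in\mathbb{Z}$ and $\langle\xi,\beta_0\rangle=0$, all three and the integrating factor $e^{iZ_\xi}$ are $2\pi$-periodic. Pairing $f$ with the kernel elements $e^{-iZ_\xi(t)}e^{-i\langle\xi,x\rangle}\in\ker{}^tL$ yields the compatibility $\int_0^{2\pi}e^{iZ_\xi(s)}\hat f(s,\xi)\,ds=0$ for every $\xi\neq 0$, while for $\xi=0$ it reduces to $\int_0^{2\pi}\hat f(t,0)\,dt=0$; this is exactly what is needed for each ODE to admit a $2\pi$-periodic solution.

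The nontrivial point is to select, among the periodic solutions of each ODE (which differ by a multiple of $e^{-iZ_\xi}$), one that stays uniformly bounded as $|\xi|\to\infty$ despite the factor $|e^{iZ_\xi(s)-iZ_\xi(t)}|=e^{Y_\xi(t)-Y_\xi(s)}$ appearing in the integrating-factor formula. This is where the connectedness of the sublevel sets $\Omega_r^\xi$ enters, following the Hounie--Treves scheme adopted in \cite{BDG1,BDG3}: for each $t\in\mathbb{T}^1$, the assumption that $\Omega_r^\xi$ is connected for every $r$ forces $\{s\in\mathbb{T}^1:Y_\xi(s)<Y_\xi(t)\}$ to be a single open arc $I_\xi(t)$. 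The non-quasianalyticity of $\omega$ (Proposition 2.4 and Lemma 3.3 of \cite{BMT}) then provides an $\mathcal{E}_{\{\omega\}}$ cutoff $\chi_\xi(t,\cdot)$ on $\mathbb{T}^1$ that vanishes on $I_\xi(t)$ and equals $1$ off a slight enlargement of it, with derivative bounds governed by a single weight constant independent of $\xi$. Splitting the integration accordingly and using the compatibility condition to patch the two pieces gives a representation
\[
\hat u(t,\xi) = e^{-iZ_\xi(t)}\int_{\mathbb{T}^1}K_\xi(t,s)\,e^{iZ_\xi(s)}\hat f(s,\xi)\,ds
\]
with a kernel $K_\xi$ built from $\chi_\xi$ and the choice of integration direction, supported where $Y_\xi(s)\geq Y_\xi(t)$; on this support the exponential factor has modulus at most $1$, so the integrand is dominated by $|\hat f(s,\xi)|$.

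The final step is to verify $u\in\mathcal{E}_{\{\omega\}}(\mathbb{T}^{N+1})$ via the partial Fourier coefficient characterization \eqref{eqr10}. The argument parallels the Faà di Bruno bookkeeping developed in Propositions \ref{pror2} and \ref{pror4}: I would differentiate the representation in $t$, expand derivatives of $e^{\pm iZ_\xi(t)}$ and $\chi_\xi(t,s)$ via Faà di Bruno, bound each elementary piece using the $\omega$-estimates for $\alpha$, $\beta$, $\hat f$ and the cutoff, then reorganise the combinatorial sums using \eqref{eqr2}, the convexity and superadditivity of $\varphi^\ast$, and Lemma 5.9 of \cite{RS}. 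The uniform bound on the integrand transfers the $\omega$-subexponential decay in $\xi$ of $\hat f$ directly to $\hat u$, yielding \eqref{eqr10}. The main obstacle I anticipate is precisely the uniform (in $\xi$) construction of $\chi_\xi$: the endpoints of $I_\xi(t)$ vary with $\xi$ and $t$, yet their derivative bounds must be controlled by a single weight constant so that the Faà di Bruno reorganisation does not accumulate $\xi$-dependent losses. Showing that the non-quasianalytic cutoff templates can be reparametrised so as to absorb this dependence, using only that $I_\xi(t)$ is a single arc, is the technical core of the argument and is what forces the appearance of non-quasianalyticity in the hypothesis.
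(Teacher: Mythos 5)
Your setup (partial Fourier series, the compatibility condition \eqref{eqq17} coming from the periodic kernel elements, and the need for a bound on $\hat u(t,\xi)$ that is uniform in $\xi$) matches the paper's, but the mechanism you propose for that uniform bound — a family of $\{\omega\}$-cutoffs $\chi_\xi(t,\cdot)$ adapted to the arcs $I_\xi(t)=\{s:Y_\xi(s)<Y_\xi(t)\}$ — is both unnecessary and, as you yourself flag, unresolved. The gap is real: the derivative bounds of a non-quasianalytic cutoff degrade as its transition region shrinks, and the arcs $I_\xi(t)$ (hence the available transition regions) degenerate as $\xi$ and $t$ vary, with endpoints that need not depend even continuously, let alone in an $\mathcal{E}_{\{\omega\}}$ way, on $t$; since you must also differentiate $K_\xi(t,s)$ in $t$ to verify \eqref{eqr10}, there is no evident way to keep a single weight constant. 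Moreover, inserting a cutoff into the Duhamel integrand changes the function, and you have not shown that the resulting $\hat u(\cdot,\xi)$ still solves the ODE; ``patching the two pieces with the compatibility condition'' is exactly the step that needs an argument. Finally, your conclusion that this construction ``forces the appearance of non-quasianalyticity'' is a misreading: the paper's Proposition \ref{pror5} holds without non-quasianalyticity, which is used only in the \emph{necessity} direction (Proposition \ref{pror2}, where cutoffs with small support are genuinely needed to build the test sequences $f_n,v_n$).

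The correct (and cutoff-free) use of connectedness is the following. Choose the base point $t_\xi$ of the Duhamel formula at a maximum of $Y_\xi$, i.e.\ $\int_0^{t_\xi}\langle\xi,\beta\rangle\,dr=\sup_t\int_0^t\langle\xi,\beta\rangle\,dr$, and set
\[\hat u(t,\xi)=\int_{t_\xi}^{t}\hat f(s,\xi)\,e^{i\int_t^s\langle\xi,\alpha(r)+i\beta(r)\rangle\,dr}\,ds.\]
For fixed $t$ and $\xi$, the superlevel set $\Omega=\{s:Y_\xi(s)\ge Y_\xi(t)\}$ is the complement in $\mathbb{T}^1$ of the sublevel set $\Omega^\xi_{Y_\xi(t)}$, which is connected by hypothesis; hence $\Omega$ is a closed arc containing both $t$ and $t_\xi$. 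The compatibility condition \eqref{eqq17} lets you replace the integral over $[t_\xi,t]$ by (minus) the integral over the complementary arc, so you may always integrate along an arc $\Gamma\subset\Omega$, on which $|e^{i\int_t^s\langle\xi,\alpha+i\beta\rangle\,dr}|=e^{Y_\xi(t)-Y_\xi(s)}\le1$. This gives $e^{Y_\xi(t)}\bigl|\int_{t_\xi}^t\hat f(s,\xi)e^{i\int_0^s\langle\xi,\alpha+i\beta\rangle\,dr}ds\bigr|\le 2\pi\max_s|\hat f(s,\xi)|\le C_1e^{-\epsilon_1\omega(|\xi|)}$ with constants independent of $t$ and $\xi$, which exactly cancels the $e^{Y_\xi(t)}$ produced by the Fa\`a di Bruno expansion of $\partial_t^m e^{-i\int_0^t\langle\xi,\alpha+i\beta\rangle\,dr}$. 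From there your proposed bookkeeping (estimate \eqref{eqr2}, Lemma 2.6 of \cite{BFP}, Lemma 5.9 of \cite{RS}) does go through and yields \eqref{eqr10}.
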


\begin{proof}Given $f\in(\ker{}^tL)^{\circ},$ we will show that there exists $u\in\mathcal{E}_{\{\omega\}}(\mathbb{T}^{N+1})$ satisfying $Lu=f.$ Writing \[u(t,x)=\sum_{\xi\in\mathbb{Z}^N}\hat{u}(t,\xi)e^{i\langle\xi,x\rangle},\] the Fourier coefficients $\hat{u}(t,\xi)$ must satisfy
\begin{equation}\label{eqq18}\partial_t\hat{u}(t,\xi)+i\langle\xi,\alpha(t)+i\beta(t)\rangle\hat{u}(t,\xi)=\hat{f}(t,\xi),\end{equation} in which we recall that $\alpha(t)=(a_{1}(t),\ldots,a_{N}(t))$ and $\beta(t)=(b_{1}(t),\ldots,b_{N}(t)).$

Since $(a_{10},\ldots,a_{N0})\in\mathbb{Z}^N$ and $b_{j0}=0,$ for all $j=1,\ldots,N,$ it follows that 
\begin{equation*}
\exp\left\{i\int_{0}^{t}\langle\xi,\alpha(r)+i\beta(r)\rangle dr\right\}\exp\{-i\langle\xi,x\rangle\}
\end{equation*}is a $2\pi-$periodic function which belongs to $\ker{}^tL.$ Since $f\in(\ker{}^tL)^\circ,$ we obtain 
\begin{equation}\label{eqq17}
\int_{0}^{2\pi}\hat{f}(t,\xi)e^{i\int_{0}^{t}\langle\xi,\alpha(r)+i\beta(r)\rangle dr}dt=0.
\end{equation}

Hence, equation \eqref{eqq18} has infinitely many solutions. Similar to which happens in Proposition 5.1 in \cite{BDG3}, these solutions are given in the generic form 
\[\hat{u}(t,\xi)=\int_{t_\xi}^{t}\hat{f}(s,\xi)e^{i\int_{t}^{s}\langle\xi,\alpha(r)+i\beta(r)\rangle dr}ds.\]

With a suitable choice of $t_\xi$ we will obtain an adequate decaying of the sequence $\hat{u}(t,\xi)$ to produce a solution $u$ in $\mathcal{E}_{\{\omega\}}(\mathbb{T}^{N+1}).$

Pick $t_\xi$ satisfying \[\int_{0}^{t_\xi}\langle\xi,\beta(r)\rangle dr=\sup_{t\in\mathbb{T}^1}\int_{0}^{t}\langle\xi,\beta(r)\rangle dr.\]

We must find positive constants $C,$ $h$ and $\epsilon$ such that 
\begin{equation}\label{eqq14}
|\partial_t^m\hat{u}(t,\xi)|\leq C\exp\{h^{-1}\varphi^\ast(mh)\}\exp\{-\epsilon\omega(|\xi|)\}, \ \ \textrm{for all} \ \ t \ \ , \ \ m \ \ \textrm{and} \ \ \xi.
\end{equation}

We may write 
\begin{align}\label{eqq15}
\partial_t^m\hat{u}(t,\xi)=&\partial_t^m\left(e^{-i\int_{0}^{t}\langle\xi,\alpha(r)+i\beta(r)\rangle dr}\right)\int_{t_\xi}^{t}\hat{f}(s,\xi)e^{i\int_{0}^{s}\langle\xi,\alpha(r)+i\beta(r)\rangle dr}ds\\
\label{eqq16}+&\sum_{n=1}^{m}\binom{m}{n}\partial_t^{n-1}\left(\hat{f}(t,\xi)e^{i\int_{0}^{t}\langle\xi,\alpha(r)+i\beta(r)\rangle dr}\right)\partial_t^{m-n}\left(e^{-i\int_{0}^{t}\langle\xi,\alpha(r)+i\beta(r)\rangle dr}\right).
\end{align}

It is enough to show that both \eqref{eqq15} and \eqref{eqq16} satisfy \eqref{eqq14}.

For each $t$ and $\xi,$ the sublevel set 
\[\Omega=\left\{s\in\mathbb{T}^1;-\int_{0}^{s}\langle\xi,\beta(r)\rangle dr\leq-\int_{0}^{t}\langle\xi,\beta(r)\rangle dr\right\}\] is connected. Since $t$ and $t_\xi$ belong to $\Omega,$ we may pick an arc $\Gamma\subset \Omega$ joining $t_\xi$ and $t.$ By \eqref{eqq17} we obtain 
\[\left|\int_{t_\xi}^{t}\hat{f}(s,\xi)e^{i\int_{0}^{s}\langle\xi,\alpha(r)+i\beta(r)\rangle dr}ds\right|=\left|\int_{\Gamma}\hat{f}(s,\xi)e^{i\int_{t}^{s}\langle\xi,\alpha(r)+i\beta(r)\rangle dr}ds\right|.\]

The above identity implies that 
\[e^{\int_{0}^{t}\langle\xi,\beta(r)\rangle dr}\left|\int_{t_\xi}^{t}\hat{f}(s,\xi)e^{i\int_{0}^{s}\langle\xi,\alpha(r)+i\beta(r)\rangle dr}ds\right|\leq 2\pi\max_{t\in[0,2\pi]}|\hat{f}(t,\xi)|\leq C_1e^{-\epsilon_1\omega(|\xi|)},\] in which $C_1$ and $\epsilon_1$ are positive constants which do not depend on $t$ and $\xi.$

By using Fa\`a di Bruno's formula, we obtain positive constants $C_2$ and $h_1$ such that 
\begin{multline*}\left|\partial_t^m\left(e^{-i\int_{0}^{t}\langle\xi,\alpha(r)+i\beta(r)\rangle dr}\right)\right|\leq\\
C_2^m\exp\{h_1^{-1}\varphi^\ast(mh_1)\}e^{\int_{0}^{t}\langle\xi,\beta(r)\rangle dr}\sum_{k_1+\cdots+mk_m=m}\frac{|\xi|^k}{k_1!\ldots k_m!}.\end{multline*}

Summarizing the last two estimates we obtain
\begin{multline*}\left|\partial_t^m\left(e^{-i\int_{0}^{t}\langle\xi,\alpha(r)+i\beta(r)\rangle dr}\right)\int_{t_\xi}^{t}\hat{f}(s,\xi)e^{i\int_{0}^{s}\langle\xi,\alpha(r)+i\beta(r)\rangle dr}ds\right|\leq\\
C_1e^{-\epsilon_1\omega(|\xi|)}C_2^m\exp\{h_1^{-1}\varphi^\ast(mh_1)\}\sum_{k_1+\cdots+mk_m=m}\frac{|\xi|^k}{k_1!\ldots k_m!}.\end{multline*}

Proceeding as in the proof of Proposition \ref{pror4} (applying Lemma 2.6 in \cite{BFP} and  Lemma 5.9 in \cite{RS}) we obtain the desired positive constants $C,$ $h$ and $\epsilon$ such that 
\begin{multline*}\left|\partial_t^m\left(e^{-i\int_{0}^{t}\langle\xi,\alpha(r)+i\beta(r)\rangle dr}\right)\int_{t_\xi}^{t}\hat{f}(s,\xi)e^{i\int_{0}^{s}\langle\xi,\alpha(r)+i\beta(r)\rangle dr}ds\right|\leq\\
Ce^{h^{-1}\varphi^\ast(mh)}e^{-\epsilon\omega(|\xi|)}.
\end{multline*}

In order to show that \eqref{eqq16} also satisfies \eqref{eqq14} we use a similar strategy. The computations are quite similar to the ones we have already performed in the proof of Proposition \ref{pror4}. We omit the details.

\end{proof}

\begin{remark}If $\alpha_0=(a_{10},\ldots,a_{N0})\in\mathbb{Z}^N$ and $\beta_0=(b_{10},\ldots,b_{N0})=(0,\ldots,0),$ then $(\alpha_{0},\beta_{0})$ satisfies condition $(EDC)_{2}^{\{\omega\}}.$ Hence, condition $(EDC)_{2}^{\{\omega\}}$ is satisfied in Proposition \ref{pror5}.
\end{remark}

Before we proceed, we state a technical result concerning the number theoretic condition $(EDC)_{2}^{\{\omega\}}.$ Its proof follows the same lines as in Lemma 3.1 in \cite{BDG1}. 

\begin{lemma}\label{lemrdc}A pair $(\alpha,\beta)\in\mathbb{R}^N\times\mathbb{R}^N$ satisfies condition $(EDC)_{2}^{\{\omega\}}$ if and only if for each $\epsilon>0$ there exists a positive constant $C_\epsilon$ such that 
\[|1-e^{-2\pi i\langle\xi,\alpha+i\beta\rangle}|\geq C_\epsilon e^{-\epsilon\omega(|\xi|)},\] for all $\xi\in\mathbb{Z}^N$ such that $\langle\xi,\alpha+i\beta\rangle\not\in\mathbb{Z}.$
\end{lemma}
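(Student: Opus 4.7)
The plan is to exploit the fact that $1-e^{-2\pi iw}$ vanishes precisely when $w\in\mathbb{Z}$ and behaves like $2\pi w$ near each integer. Concretely, I would first establish the elementary two-sided comparison: there exists a constant $c_0>0$ such that for every $w\in\mathbb{C}$ with $\mathrm{Re}(w)\in[-1/2,1/2]$,
\[
c_0\,\min(|w|,1)\leq |1-e^{-2\pi iw}|\leq 2\pi |w|\,e^{2\pi|\mathrm{Im}(w)|}.
\]
The upper bound is the mean-value estimate. The lower bound follows because the function $w\mapsto |1-e^{-2\pi iw}|/|w|$ extends continuously to $w=0$ with value $2\pi$, is strictly positive for $\mathrm{Re}(w)\in[-1/2,1/2]$ with $w\neq 0$ (no other zeros of $1-e^{-2\pi iw}$ occur in this strip), and $|1-e^{-2\pi iw}|$ itself is bounded below by a positive constant for $|w|\geq 1$ in the strip (the limit values at $\mathrm{Im}(w)\to\pm\infty$ being $+\infty$ and $1$, respectively).

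For the forward direction, assume $(EDC)_{2}^{\{\omega\}}$ and fix $\epsilon>0$. Given $\xi\in\mathbb{Z}^N$ with $z=\langle\xi,\alpha+i\beta\rangle\notin\mathbb{Z}$, I pick $\tau=\tau(\xi)\in\mathbb{Z}$ nearest to $-\langle\xi,\alpha\rangle$, so that $\mathrm{Re}(\tau+z)\in[-1/2,1/2]$ and $\tau+z\neq 0$. Since $|\tau|\leq |\langle\xi,\alpha\rangle|+1/2\leq C|\xi|+1/2$, the doubling property $\omega(2t)\leq K(\omega(t)+1)$ combined with monotonicity yields a constant $K'>0$ with $\omega(|(\tau,\xi)|)\leq K'(\omega(|\xi|)+1)$. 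Applying $(EDC)_{2}^{\{\omega\}}$ with exponent $\epsilon/K'$ at $(\tau,\xi)$ gives $|\tau+z|\geq C'_\epsilon e^{-\epsilon\omega(|\xi|)}$. The lower bound of the opening paragraph then yields
\[
|1-e^{-2\pi iz}|=|1-e^{-2\pi i(\tau+z)}|\geq c_0\min(|\tau+z|,1)\geq \tilde C_\epsilon e^{-\epsilon\omega(|\xi|)},
\]
treating the cases $|\tau+z|\leq 1$ and $|\tau+z|>1$ separately.

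For the reverse direction, assume the $|1-e^{-2\pi i\cdot}|$ estimate and fix $\epsilon>0$. Given $(\tau,\xi)\in\mathbb{Z}^{N+1}$ with $w=\tau+z\neq 0$, I split into cases. If $\xi=0$ or $z\in\mathbb{Z}$, then $w$ is a nonzero integer, so $|w|\geq 1$ beats any expression $C_\epsilon e^{-\epsilon\omega(|(\tau,\xi)|)}$. Otherwise $z\notin\mathbb{Z}$ and by hypothesis $|1-e^{-2\pi iz}|\geq C_\epsilon e^{-\epsilon\omega(|\xi|)}\geq C_\epsilon e^{-\epsilon\omega(|(\tau,\xi)|)}$, since $|\xi|\leq|(\tau,\xi)|$ and $\omega$ is increasing. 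The case $|w|\geq 1/2$ is trivial; if $|w|<1/2$, then $|\mathrm{Re}(w)|<1/2$ forces $\tau$ to be the nearest integer to $-\mathrm{Re}(z)$, and the upper estimate from the opening paragraph applied with $|\mathrm{Im}(w)|\leq 1/2$ gives $|1-e^{-2\pi iz}|=|1-e^{-2\pi iw}|\leq C|w|$, which produces the lower bound $|w|\geq C^{-1}C_\epsilon e^{-\epsilon\omega(|(\tau,\xi)|)}$ as required.

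The main technical point is the two-sided comparison in the opening paragraph, which is what replaces the simpler real-valued analogue; everything else is bookkeeping to absorb multiplicative constants into the arbitrary $\epsilon$ (forward direction) or to split into the three regimes that match the quantifier ``for all $\xi$ with $\langle\xi,\alpha+i\beta\rangle\notin\mathbb{Z}$'' versus ``for all $(\tau,\xi)$ with $\tau+\langle\xi,\alpha+i\beta\rangle\neq 0$'' (reverse direction).
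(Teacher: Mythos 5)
Your proof is correct. The paper does not write out a proof of this lemma --- it only remarks that the argument ``follows the same lines as in Lemma 3.1 in [BDG1]'' --- and your argument is exactly that standard one: a two-sided comparison between $|1-e^{-2\pi i w}|$ and $\min(|w|,1)$ on the strip $|\mathrm{Re}(w)|\leq 1/2$, together with the choice of $\tau$ as the nearest integer to $-\langle\xi,\alpha\rangle$ and the doubling property of $\omega$ to pass between $\omega(|\xi|)$ and $\omega(|(\tau,\xi)|)$.
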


\begin{proposition}\label{pror6}Suppose that $b_{j0}\neq0,$ for some $j=1,\ldots,N,$ and $b_k(t)$ does not change sign, for each $k,$ and $\dim \emph{\textrm{span}}\{b_1,\ldots,b_N\}=1.$ If $(\alpha_{0},\beta_{0})$ satisfies condition $(EDC)_{2}^{\{\omega\}},$ then the operator $L$ given by {\em(\ref{L_introduction})} is globally $\{\omega\}-$solvable.
\end{proposition}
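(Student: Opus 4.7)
The plan is to reduce the existence problem, via partial Fourier series in $x$, to solving the family of first-order ODEs
\[
\partial_t \hat{u}(t,\xi) + i\langle \xi, \alpha(t) + i\beta(t)\rangle \hat{u}(t,\xi) = \hat{f}(t,\xi), \qquad \xi \in \mathbb{Z}^N,
\]
by prescribing for each $\xi$ a particular periodic solution whose coefficient sequence decays rapidly enough to recover $u \in \mathcal{E}_{\{\omega\}}(\mathbb{T}^{N+1})$ through the characterization \eqref{eqr10}. Under the hypotheses I would first write $\beta(t) = b(t)\lambda$ with $b \in \mathcal{E}_{\{\omega\}}(\mathbb{T}^1)$ of constant sign, $b_0 \neq 0$ (WLOG $b \geq 0$ and $b_0 > 0$), and $\lambda \in \mathbb{R}^N$; then $\beta_0 = b_0 \lambda$ and $B(t) = \int_0^t b(r)\,dr$ is monotone with $B(t+2\pi) = B(t) + 2\pi b_0$.

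The construction splits according to $\langle \xi, \lambda\rangle$. Since $\langle \xi, \alpha_0 + i\beta_0\rangle \in \mathbb{Z}$ forces $\langle \xi, \lambda\rangle = 0$ (so $\langle \xi, \beta(t)\rangle \equiv 0$) and $\langle \xi, \alpha_0\rangle \in \mathbb{Z}$, the function $e^{-i\int_0^t \langle \xi, \alpha\rangle dr}e^{-i\langle \xi, x\rangle}$ lies in $\ker{}^tL$ for such $\xi$, so $f \in (\ker{}^tL)^\circ$ supplies the compatibility condition $\int_0^{2\pi}\hat{f}(s,\xi)e^{i\int_0^s\langle \xi,\alpha\rangle dr}ds = 0$ and I would set
\[
\hat{u}(t,\xi) = e^{-i\int_0^t \langle \xi, \alpha\rangle dr} \int_{0}^t e^{i\int_0^s \langle \xi, \alpha\rangle dr}\hat{f}(s,\xi)\,ds.
\]
For the remaining $\xi$ there is a unique periodic solution, which I would write using one of two closed-form formulas:
\[
\hat{u}(t,\xi) = \frac{1}{1 - e^{-2\pi i\langle \xi, \alpha_0 + i\beta_0\rangle}} \int_0^{2\pi} \hat{f}(t-\tau,\xi)\, e^{-i\int_{t-\tau}^t \langle \xi, \alpha+i\beta\rangle dr}\,d\tau
\]
when $\langle \xi, \lambda\rangle \geq 0$, and the mirror formula integrating from $t$ to $t+2\pi$ (with denominator $e^{2\pi i\langle \xi, \alpha_0 + i\beta_0\rangle} - 1$) when $\langle \xi, \lambda\rangle < 0$. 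The direction is chosen so that the monotonicity of $B$ together with the sign of $\langle \xi, \lambda\rangle$ forces $|e^{\mp\int \langle \xi, \beta\rangle dr}| \leq 1$ on the integration interval, making the exponential factor harmless.

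Next I would estimate $\partial_t^m \hat{u}(t,\xi)$ uniformly. Lemma \ref{lemrdc} converts $(EDC)_2^{\{\omega\}}$ into $|1 - e^{-2\pi i\langle \xi, \alpha_0 + i\beta_0\rangle}|^{-1} \leq C_\epsilon^{-1} e^{\epsilon \omega(|\xi|)}$ for every $\epsilon > 0$; combining this with the a priori decay $|\partial_t^m \hat{f}(t,\xi)| \leq C e^{h^{-1}\varphi^{\ast}(mh)}e^{-\epsilon_0 \omega(|\xi|)}$ and the bounded exponential factor, a choice of $\epsilon < \epsilon_0$ absorbs the EDC loss and leaves net exponential decay in $|\xi|$. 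Differentiating under the integral, applying the Leibniz rule, and treating the integrand $e^{\mp i\int \langle \xi, \alpha + i\beta\rangle dr}$ with Fa\`a di Bruno's formula exactly as in the proofs of Propositions \ref{pror4} and \ref{pror5} — including \eqref{eqr2}, Lemma 2.6 in \cite{BFP}, and Lemma 5.9 in \cite{RS} — produces constants $C', h', \epsilon' > 0$ with $|\partial_t^m \hat{u}(t,\xi)| \leq C'e^{(h')^{-1}\varphi^{\ast}(mh')}e^{-\epsilon'\omega(|\xi|)}$ uniformly. By \eqref{eqr10} this forces $u \in \mathcal{E}_{\{\omega\}}(\mathbb{T}^{N+1})$.

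The hard part will be the combinatorial bookkeeping in the Fa\`a di Bruno step: one must differentiate the exponential $e^{\mp i\int \langle \xi, \alpha + i\beta\rangle dr}$ arbitrarily often, which brings down powers of $|\xi|$ that need to be absorbed into a single factor $e^{-\epsilon'\omega(|\xi|)}$ via the trade-off $|\xi|^k e^{-\omega(|\xi|)/K} \leq e^{K^{-1}\varphi^{\ast}(kK)}$ of Lemma 2.6 in \cite{BFP}, while simultaneously respecting the subadditivity properties of $\varphi^{\ast}$ encoded in \eqref{eqr2}. Keeping the estimate uniform across the three cases (compatibility, forward, backward formulas) is precisely the point where the argument is a more delicate rerun of the calculations already carried out in Propositions \ref{pror4} and \ref{pror5}.
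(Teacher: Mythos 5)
Your proposal follows the paper's argument essentially step for step: the same Fourier reduction to the ODEs, the same split into the resonant indices $\xi$ (where $\langle\xi,\lambda\rangle=0$, $\langle\xi,\alpha_0\rangle\in\mathbb{Z}$, and the compatibility condition coming from $f\in(\ker{}^tL)^\circ$ lets you integrate from $0$ to $t$) versus the remaining $\xi$ (unique periodic solution written in one of two closed forms), the same use of Lemma \ref{lemrdc} to handle the denominator, and the same Fa\`a di Bruno/\eqref{eqr2}/Lemma~2.6 of \cite{BFP}/Lemma~5.9 of \cite{RS} machinery already deployed in Propositions \ref{pror4} and \ref{pror5}.

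There is, however, a sign slip in the one place where a choice actually has to be made. With your normalization $b\geq0$, $b_0>0$, the modulus of the exponential in the forward formula is
\[
\left|e^{-i\int_{t-\tau}^{t}\langle\xi,\alpha(r)+i\beta(r)\rangle\,dr}\right|
=e^{\int_{t-\tau}^{t}\langle\xi,\beta(r)\rangle\,dr}
=e^{\langle\xi,\lambda\rangle\,(B(t)-B(t-\tau))},
\]
and since $B$ is nondecreasing and $\tau\geq0$, this is $\leq1$ precisely when $\langle\xi,\lambda\rangle\leq0$ --- the opposite of your assignment. As written, your justification (``the direction is chosen so that $\ldots$ forces $|e^{\mp\int\langle\xi,\beta\rangle dr}|\leq1$'') fails for every $\xi$ with $\langle\xi,\lambda\rangle>0$. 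The fix is simply to swap the two cases, which recovers the paper's rule: use the first (forward) formula when $b(t)\langle\xi,\lambda\rangle\leq0$ and the second (backward) one otherwise. (One could in principle salvage the reversed choice by observing that the denominator $|1-e^{2\pi b_0\langle\xi,\lambda\rangle-2\pi i\langle\xi,\alpha_0\rangle}|$ grows like $e^{2\pi b_0\langle\xi,\lambda\rangle}$ and exactly compensates the worst case $e^{\langle\xi,\lambda\rangle(B(t)-B(t-2\pi))}=e^{2\pi b_0\langle\xi,\lambda\rangle}$ of the numerator, but that is a different argument from the one you gave and is not needed.) With that correction the proposal matches the paper's proof.
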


\begin{proof}Under the assumptions, we may write \[L=\frac{\partial}{\partial t}+\sum_{j=0}^{N}(a_{j}(t)+i\lambda_jb(t))\frac{\partial}{\partial x_j},\] where $b\in \mathcal{C}^{\infty}(\mathbb{T}^1,\mathbb{R}),$ $b$ does not change sign, $b_0\doteq(2\pi)^{-1}\int_{0}^{2\pi}b(t)dt\neq0,$ and $\lambda\doteq(\lambda_1,\ldots,\lambda_N)\in\mathbb{R}^N\setminus\{0\}.$ The procedure is the same as in the previous proposition, that is, given $f\in(\ker{}^tL)^{\circ},$ we will show that there exists $u\in\mathcal{E}_{\{\omega\}}(\mathbb{T}^{N+1})$ satisfying $Lu=f.$ By using partial Fourier series, we will solve the equations
\begin{equation*}\label{eqq19}\partial_t\hat{u}(t,\xi)+i\langle\xi,\alpha(t)+ib(t)\lambda\rangle\hat{u}(t,\xi)=\hat{f}(t,\xi),\end{equation*} in which we recall that $\alpha(t)=(a_{1}(t),\ldots,a_{N}(t)).$ 

If $\xi\in\mathbb{Z}^N$ is such that $\langle\xi,\lambda\rangle=0$ and $\langle\xi,\alpha_0\rangle\in\mathbb{Z},$ then we choose the solution 
\[\hat{u}(t,\xi)=\int_{0}^{t}\hat{f}(s,\xi)e^{i\int_{t}^{s}\langle\xi,\alpha(r)\rangle dr}ds.\] 

To the other indices $\xi,$ the equation \eqref{eqq19} has a unique solution, which may be written as either

\[\hat{u}(t,\xi)=(1-e^{2\pi\langle\xi,b_0\lambda-i\alpha_0\rangle})^{-1}\int_{0}^{2\pi}\hat{f}(\xi,t-s)e^{\int_{t-s}^{t}\langle\xi,b(r)\lambda-i\alpha(r)\rangle dr}ds\]

or

\[\hat{u}(t,\xi)=(e^{-2\pi\langle\xi,b_0\lambda-i\alpha_0\rangle}-1)^{-1}\int_{0}^{2\pi}\hat{f}(\xi,t+s)e^{\int_{t}^{t+s}\langle\xi,-b(r)\lambda+i\alpha(r)\rangle dr}ds.\]

In order to show that the sequence of solutions above selected has the desired decaying, we proceed as in the proof of Proposition \ref{pror4} (applying Lemma 2.6 in \cite{BFP} and  Lemma 5.9 in \cite{RS}) in addition to Lemma \ref{lemrdc} and with a particular choice between the two above formulas (so that the exponential will be always bounded; for instance, if $\xi$ is such that $b(t)\langle\xi,\lambda\rangle\leq0,$ then we use the first formula).

The computations used to prove the decaying of the sequence of solutions do not bring any novelty to this paper and the details are omitted. 
\end{proof}

\begin{proposition}\label{pror7}Suppose that $b_{j}$ vanishes identically, for each $j.$ If $(\alpha_{0},\beta_{0})$ satisfies condition $(EDC)_{2}^{\{\omega\}},$ then the operator $L$ given by {\em(\ref{L_introduction})} is globally $\{\omega\}-$solvable.
\end{proposition}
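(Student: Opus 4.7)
The plan is to adapt the strategy of Propositions \ref{pror5} and \ref{pror6} to the present (simpler) setting where $\beta(t)\equiv0$. Given $f\in(\ker{}^tL)^{\circ}$, I will construct a solution $u\in\mathcal{E}_{\{\omega\}}(\mathbb{T}^{N+1})$ to $Lu=f$ via partial Fourier series by solving, for each $\xi\in\mathbb{Z}^N$, the scalar ODE
\[
\partial_t\hat{u}(t,\xi)+i\langle\xi,\alpha(t)\rangle\hat{u}(t,\xi)=\hat{f}(t,\xi),
\]
and then showing the sequence $\{\hat{u}(\cdot,\xi)\}$ has the decay in $|\xi|$ required by the characterization \eqref{eqr10}. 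Since all $b_j$ vanish identically, the integrating factor $\exp\{-i\int_0^t\langle\xi,\alpha(r)\rangle dr\}$ has modulus one, which removes the growth issues that were the main difficulty in the proof of Proposition \ref{pror5}.

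I would split the argument according to whether $\langle\xi,\alpha_0\rangle\in\mathbb{Z}$. If $\langle\xi,\alpha_0\rangle\in\mathbb{Z}$ (in particular for $\xi=0$), the function $\exp\{i\int_0^t\langle\xi,\alpha(r)\rangle dr\}\exp\{-i\langle\xi,x\rangle\}$ is a $2\pi$-periodic element of $\ker{}^tL$, so the hypothesis $f\in(\ker{}^tL)^{\circ}$ forces
\[
\int_0^{2\pi}\hat{f}(t,\xi)e^{i\int_0^t\langle\xi,\alpha(r)\rangle dr}\,dt=0,
\]
and I take the solution
\[
\hat{u}(t,\xi)=\int_0^t \hat{f}(s,\xi)\,e^{i\int_t^s\langle\xi,\alpha(r)\rangle dr}\,ds,
\]
which is automatically $2\pi$-periodic in $t$ by the compatibility relation. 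If $\langle\xi,\alpha_0\rangle\not\in\mathbb{Z}$, the equation admits the unique periodic solution
\[
\hat{u}(t,\xi)=\bigl(1-e^{-2\pi i\langle\xi,\alpha_0\rangle}\bigr)^{-1}\int_0^{2\pi}\hat{f}(t-s,\xi)\,e^{-i\int_{t-s}^{t}\langle\xi,\alpha(r)\rangle dr}\,ds,
\]
and Lemma \ref{lemrdc} together with the hypothesis $(EDC)_2^{\{\omega\}}$ on $(\alpha_0,0)$ furnishes, for every $\epsilon>0$, a constant $C_\epsilon>0$ with $|1-e^{-2\pi i\langle\xi,\alpha_0\rangle}|\geq C_\epsilon e^{-\epsilon\omega(|\xi|)}$.

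The remaining step is to estimate $\partial_t^m\hat{u}(t,\xi)$. Writing the derivative through the integral formula, applying Leibniz and Faà di Bruno to the factor $\exp\{-i\int_0^t\langle\xi,\alpha(r)\rangle dr\}$ (bounded in modulus by $1$), using the ultradifferentiability of the $a_j$ to control $\alpha^{(k)}(t)$ by $C e^{h^{-1}\varphi^\ast(kh)}$, and absorbing the polynomial factors $|\xi|^k$ via Lemma 2.6 of \cite{BFP}, one obtains, after a final application of Lemma 5.9 in \cite{RS} to consolidate constants, an estimate of the form
\[
|\partial_t^m\hat{u}(t,\xi)|\leq C\,e^{h^{-1}\varphi^{\ast}(mh)}\,e^{-\epsilon\omega(|\xi|)}
\]
for all $t,m,\xi$, valid with $\epsilon>0$ smaller than the original decay rate of $\hat{f}$ minus a small loss from $(EDC)_2^{\{\omega\}}$. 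By \eqref{eqr10} this is exactly the characterization of $\mathcal{E}_{\{\omega\}}(\mathbb{T}^{N+1})$, so $u\in\mathcal{E}_{\{\omega\}}(\mathbb{T}^{N+1})$ and $Lu=f$.

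The only delicate point is the case $\langle\xi,\alpha_0\rangle\not\in\mathbb{Z}$, where one must pair the loss $e^{\epsilon\omega(|\xi|)}$ coming from the small divisor with the original decay of $\hat{f}(\cdot,\xi)$ and still leave enough room for the polynomial factors $|\xi|^k$ that appear through Faà di Bruno. Since $\epsilon>0$ in $(EDC)_2^{\{\omega\}}$ is arbitrary, this is a bookkeeping matter: one fixes $\epsilon$ small relative to the decay exponent of $f$ before performing the estimate. The computations parallel those in the proof of Proposition \ref{pror4}, so I would refer to them rather than redo the bookkeeping, and simply stress that the absence of $\beta(t)$ makes every exponential factor bounded, simplifying the analysis substantially.
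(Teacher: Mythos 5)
Your proposal is correct and follows essentially the same route as the paper's proof: the same case split on whether $\langle\xi,\alpha_0\rangle\in\mathbb{Z}$, the same two solution formulas, Lemma \ref{lemrdc} for the small divisor, and the Fa\`a di Bruno / Lemma 2.6 of \cite{BFP} / Lemma 5.9 of \cite{RS} machinery for the decay estimates. The paper is simply terser, deferring the bookkeeping to the earlier propositions exactly as you propose to do.
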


\begin{proof}The assumptions reduce the operator $L$ to a real operator in the form 
\[L=\frac{\partial}{\partial t}+\sum_{j=0}^{N}a_{j}(t)\frac{\partial}{\partial x_j},\] and the partial Fourier series lead us to the equations 
\begin{equation*}\label{eqq20}\partial_t\hat{u}(t,\xi)+i\langle\xi,\alpha(t)\rangle\hat{u}(t,\xi)=\hat{f}(t,\xi).\end{equation*}

If $\xi\in\mathbb{Z}^N$ is such that $\langle\xi,\alpha_0\rangle\in\mathbb{Z},$ then we pick the solution 
\[\hat{u}(t,\xi)=\int_{0}^{t}\hat{f}(s,\xi)e^{i\int_{t}^{s}\langle\xi,\alpha(r)\rangle dr}ds.\] 

On the other hand, if $\langle\xi,\alpha_0\rangle\not\in\mathbb{Z},$ then we have a unique solution given by
\[\hat{u}(t,\xi)=(1-e^{-2\pi i\langle\xi,\alpha_0\rangle})^{-1}\int_{0}^{2\pi}\hat{f}(\xi,t-s)e^{-i\int_{t-s}^{t}\langle\xi,\alpha(r)\rangle dr}ds.\] 

As before condition $(EDC)_{2}^{\{\omega\}}$ implies that the selected sequence of solutions has the desired decaying.
\end{proof}

The results in this section together with the results in Section \ref{sec4} complete the proof of Theorem \ref{mtr2}.

\section{Strong $\{\omega\}-$solvability and Global $\{\omega\}-$hypoellipticity}\label{sec6}

This section is dedicated to prove Theorem \ref{mtr1}. 

We first recall that, as in the class of smooth functions and in the Gevrey classes, the operator $L$ is strongly $\{\omega\}-$solvable if and only if $L$ is globally $\{\omega\}-$solvable and $\ker {}^tL$ has finite dimension. 

The unique situation in which the dimension of $\ker {}^tL$ is infinite is in the presence of an index $\xi\in\mathbb{Z}^N\setminus\{0\}$ such that $\langle\xi,\beta_0\rangle=0$ and $\langle\xi,\alpha_0\rangle\in\mathbb{Z}.$ We also notice that condition $(EDC)_{2}^{\{\omega\}}$ is equivalent to $(EDC)_{1}^{\{\omega\}}$ whenever, for each $\xi\in\mathbb{Z}^N\setminus\{0\},$ either $\langle\xi,\beta_0\rangle\neq0$ or $\langle\xi,\alpha_0\rangle\not\in\mathbb{Z}.$

It follows from Proposition \ref{pror4} that $(EDC)_{1}^{\{\omega\}}$ is a necessary condition to the strong $\{\omega\}-$solvability.

If $\beta_0=(b_{10},\ldots,b_{N0})=0,$ then by using Proposition \ref{pror2} we see that $L$ is not strongly $\{\omega\}-$solvable if there exists $j$ such that $b_j$ does not vanish identically. On the other hand, if $b_k$ vanishes identically for each $k,$ by using item $(I.1)$ in Theorem \ref{mtr2} we see that $L$ is strongly $\{\omega\}-$solvable if and only if $(\alpha_0,0)$ satisfies $(EDC)_{1}^{\{\omega\}}.$     

If $b_{j0}\neq0$ for some $j,$ then Proposition \ref{pror3} implies that $L$ is not strongly $\{\omega\}-$solvable if $\dim \textrm{span}\{b_1,\ldots,b_N\}>1$ or at least one $b_k(t)$ changes sign. Finally, supposing that $(\alpha_0,\beta_0)$ satisfies $(EDC)_{1}^{\{\omega\}},$ then $\ker^tL$ has finite dimension and supposing that $b_{j0}\neq0$ for some $j,$ $\dim \textrm{span}\{b_1,\ldots,b_N\}\leq1$ and $b_k(t)$ does not change sign, for each $k,$ then Theorem \ref{mtr2} implies that $L$ is strongly $\{\omega\}-$solvable.

The above comments prove that the conditions $(1),$ $(2)$ and $(3)$ in Theorem \ref{mtr1} characterizes the  strong $\{\omega\}-$solvability of $L.$

\bigskip

In the sequel, we will complete the proof of Theorem \ref{mtr1} by treating the $\{\omega\}-$ hypoellipticity.

We start by proving that the conditions presented in Theorem \ref{mtr1} are sufficient to the global  $\{\omega\}-$hypoellipticity. Under such conditions, the operator $L$ has the form \[L=\frac{\partial}{\partial t}+\sum_{j=0}^{N}(a_{j}(t)+i\lambda_jb(t))\frac{\partial}{\partial x_j},\] where $b\in \mathcal{C}^{\infty}(\mathbb{T}^1,\mathbb{R}),$ $b$ either vanishes identically or it does not change sign, and $\lambda\doteq(\lambda_1,\ldots,\lambda_N)\in\mathbb{R}^N\setminus\{0\}.$ 

Given $\mu\in\mathcal{D}'(\mathbb{T}^{N+1})$ such that $L\mu=f\in\mathcal{E}_{\{\omega\}}(\mathbb{T}^{N+1}),$ by the partial Fourier series we are led to the equations
\[\partial_t\hat{u}(t,\xi)+i\langle\xi,\alpha(t)+ib(t)\lambda\rangle\hat{u}(t,\xi)=\hat{f}(t,\xi).\]

Since $(\alpha_0,b_0\lambda)$ satisfies $(EDC)_{1}^{\{\omega\}},$ it follows that for each $\xi$ we have either $\langle\xi,\alpha_0\rangle\not\in\mathbb{Z}$ or $b_0\langle\xi,\lambda\rangle\neq0.$ Hence each of the above equations has a unique solution which may be written in the two forms 

\[\hat{u}(t,\xi)=(1-e^{2\pi\langle\xi,b_0\lambda-i\alpha_0\rangle})^{-1}\int_{0}^{2\pi}\hat{f}(\xi,t-s)e^{\int_{t-s}^{t}\langle\xi,b(r)\lambda-i\alpha(r)\rangle dr}ds\]

or

\[\hat{u}(t,\xi)=(e^{-2\pi\langle\xi,b_0\lambda-i\alpha_0\rangle}-1)^{-1}\int_{0}^{2\pi}\hat{f}(\xi,t+s)e^{\int_{t}^{t+s}\langle\xi,-b(r)\lambda+i\alpha(r)\rangle dr}ds.\]

As explained in the proof of propositions \ref{pror6} and \ref{pror7}, the condition $(EDC)_{1}^{\{\omega\}}$ and the control on the change of sign of $b(t)$ will imply that the sequence of solutions $\hat{u}(t,\xi)$ has the decaying of a function $u$ in $\mathcal{E}_{\{\omega\}}(\mathbb{T}^{N+1}).$

Therefore, the conditions presented in Theorem \ref{mtr1} are sufficient to the global  $\{\omega\}-$hypoellipticity of $L.$

We now turn our attention to the necessity. Recall that in Lemma \ref{lemfinal} we showed that the global $\{\omega\}-$hypoellipticity of ${}^tL=-L$ implies that $L$ is strongly $\{\omega\}-$solvable.    

Since we already proved that conditions $(1)-(3)$ in Theorem \ref{mtr1} are necessary to the strong ${\{\omega\}}-$solvability, it follows that they are also necessary to the global  ${\{\omega\}}-$hypoellipticity. The proof of Theorem \ref{mtr1} is then completed.

{\subsection*{Acknowledgment}
The author gratefully acknowledge the suggestions of professor Bruno de Lessa Victor concerning ultradifferentiable functions.

\bibliographystyle{elsarticle-num}

\end{document}